\renewcommand{\thesection}{\arabic{section}}
\newtheorem{theorem}{Theorem}[section]
\newtheorem{lemma}[theorem]{Lemma}
\newtheorem{prop}[theorem]{Proposition}
\newtheorem{corollary}[theorem]{Corollary}
\theoremstyle{remark}
\newtheorem{remark}[theorem]{Remark}
\renewcommand{\theequation}{\thesection .\arabic{equation}}
\let\subs\subsection
\renewcommand\subsection{\setcounter{equation}{0}
\gdef\theequation{\thesubsection \arabic{equation}}\subs}
\let\sect\section
\renewcommand\section{\setcounter{equation}{0}
\gdef\theequation{\thesection .\arabic{equation}}\sect}
\newcommand{\Rmnum}[1]{\expandafter\@slowromancap\romannumeral #1@}
\newcommand{\IC}{{\mathbb{C}}}
\newcommand{\IR}{{\mathbb{R}}}
\newcommand{\TT}{{\mathbb{T}}}
\newcommand{\tor}{\TT}
\newcommand{\be}{\begin{eqnarray}}
\newcommand{\ee}{\end{eqnarray}}
\newcommand{\mes}{\mathop{\rm{mes}\, }}
\def\beeq{\begin{equation}}
\def\eneq{\end{equation}}
\def\bm{\begin{matrix}}
\def\endm{\end{matrix}}
\begin{document}

\title[H\"older continuity of Lyapunov exponent for quasi-periodic
Jacobi operators]{H\"older continuity of Lyapunov exponent for
quasi-periodic Jacobi operators}

\author{Kai Tao}

\address{Department of Mathematics, Nanjing University, 22 Hankou Road Nanjing Jiangsu 210093 P.R.China }

\email{tao.nju@gmail.com}

\thanks{This work was done during 2010-2011 year when author was a visiting doctoral student at the Department of Mathematics
University of Toronto. The author wishes to thank M.Goldstein  for
his supervision of the project. }

\date{}

\begin{abstract}We consider the quasi-periodic Jacobi operator $H_{x,\omega}$ in $l^2(\mathbb{Z})$
\[
(H_{x,\omega}\phi)(n)=-b(x+(n+1)\omega)\phi(n+1)-b(x+n\omega)\phi(n-1)+a(x+n\omega)\phi(n)=E\phi(n)
,\ n\in\mathbb{Z},\]where $a(x),\ b(x)$ are analytic function on
$\mathbb{T}$ , $b$ is not identically zero, and $\omega$ obeys some strong Diophantine condition.
 We consider the corresponding unimodular cocycle. We prove that if the Lyapunov exponent $L(E)$ of the cocycle
 is positive for some $E=E_0$, then there exists $\rho_0=\rho_0(a,b,\omega,E_0)$, $\beta=\beta(a,b,\omega)$
such that $|L(E)-L(E')|<|E-E'|^\beta$ for any $E,E'\in (E_0-\rho_0,E_0+\rho_0)$. If $L(E)>0$ for all $E$ in some compact interval
$I$ then $L(E)$ is H\"{o}lder continuous on $I$ with a H\"{o}lder exponent $\beta=\beta(a,b,\omega,I)$. In our derivation we follow
the refined version of the Goldstein-Schlag method ~\cite{GS} developed by Bourgain and Jitomirskaya ~\cite{BJ}.
\end{abstract}

 \maketitle

\section{Introduction}
We consider the quasi-periodic Jacobi operator $H_{x,\omega}$ in $l^2(\mathbb{Z})$
\[
(H_{x,\omega}\phi)(n)=-b(x+(n+1)\omega)\phi(n+1)-b(x+n\omega)\phi(n-1)+a(x+n\omega)\phi(n)=E\phi(n)
,\ n\in\mathbb{Z},\]where $a(x),\ b(x)$ are analytic function on
$\mathbb{T}$, $b$ is not identically zero. Set
\[
A(x,E,\omega)=\frac{1}{b(x+\omega)}\left( \begin{array}{cc}
a(x)-E& -b(x) \\
b(x+\omega)& 0
\end{array}\right )
.\]
\[
M_{N}(x,E,\omega)=M_{[1,N]}(x,E,\omega)=A(x+(N-1)\omega,E,\omega)A(x+(N-2)\omega,E,\omega)\cdots
A(x,E,\omega),
\]
Define the unimodular matrix
\[
\tilde{M}_{N}(x,E,\omega)=\tilde{M}_{[1,N]}(x,E,\omega):=\frac{M_{[1,N]}(x,E,\omega)}{|\det
M_{[1,N]}(x,E,\omega)|^{\frac{1}{2}}}.
\]
As
\[
\det A(x,E,\omega)= \frac{b(x)}{b(x+\omega)},
\]then
\begin{equation}\label{10001}
\det M_{[1,N]}(x,E,\omega)=
\prod_{n=0}^{N-1}\frac{b(x+n\omega)}{b(x+(n+1)\omega)}
=\frac{b(x)}{b(x+N\omega)},
\end{equation}and
\be\label{10002}
\log\|\tilde{M}_{[1,N]}(x,E,\omega)\|=\log\|M_{[1,N]}(x,E,\omega)\|-\frac{1}{2}\log|\frac{b(x)}{b(x+N\omega)}|.\ee
\begin{remark}\label{13002}
\begin{enumerate}
\item[{\rm{(1)}}]Note that
\[\|A(x,E,\omega)\|\leq \frac{C(a,b,E)}{|b(x+\omega)|},\]
where the constants $C(a,b,E)$ obeys $C(a,b,E_0)=\sup_{|E|\leq
E_0}C(a,b,E)<+\infty.$ Therefore
\[\frac{1}{N}\log\|M_{[1,N]}(x,E,\omega)\|\leq \log C(a, b,
E)-\frac{1}{N}\sum_{n=1}^N\log|b(x+n\omega)|.\] In this paper we
always assume that $|E|\leq E_0$, where $E_0$ depends on $a,b$. For
that matter we suppress $E$ from the notations of some of the constants
involved.

\item[{\rm{(2)}}] $\log\|\tilde{M}_{[1,N]}(x,E,\omega)\|\geq 0$,
since $\tilde{M}_{[1,N]}(x,E,\omega)$ is unimodular.

\item[{\rm{(3)}}]\begin{eqnarray}
0&\leq &\frac{1}{N}\log
\|\tilde{M}_{[1,N]}(x,E,\omega)\|=\frac{1}{N}\log
\|M_{[1,N]}(x,E,\omega)\|-\frac{1}{2N}\log|\frac{b(x)}{b(x+N\omega)}|\nonumber\\
&\leq &\log
C(a,b,E)-\frac{1}{N}\sum_{n=1}^N\log|b(x+n\omega)|-\frac{1}{2N}\log|\frac{b(x)}{b(x+N\omega)}|\nonumber
\end{eqnarray}
\item[{\rm{(4)}}]It is well-known fact that if $b$ is analytic function which is not identically zero then $(\log|b|)^2$ is
integrable. Set
\[D=\int_{\mathbb{T}}\log |b(\theta)|d\theta .\]
Therefore
\[\int_{\mathbb{T}} \left |\frac{1}{N}\log
\|\tilde{M}_{[1,N]}(x,E,\omega)\| \right |dx =\int_{\mathbb{T}}
\frac{1}{N}\log \|\tilde{M}_{[1,N]}(x,E,\omega)\|dx \leq C'(a,
b)-D:=C''(a,b).\]Similarly
\[\int_{\mathbb{T}}(\frac{1}{N}\log\|\tilde{M}_N(x,E,\omega)\|)^2dx
\leq \tilde{C}(a,b).\]
\item[{\rm{(5)}}]Combining (4) with (\ref{10002}), we concludes that
$\frac{1}{N}\log\|M_{[1,N]}(x,E,\omega)\|$ is integrable, and
\[
 \frac{1}{N} \int_{\mathbb{T}} \log
\|M_N(x,E,\omega)\|  dx= \frac{1}{N} \int_{\mathbb{T}} \log
\|\tilde{M}_N(x,E,\omega)\|  dx .\]
\end{enumerate}
\end{remark}
~\\  ~\\ ~\\ Set \[L_N(E,\omega)=\frac{1}{N} \int_{\mathbb{T}} \log
\|M_N(x,E,\omega)\| dx =\frac{1}{N} \int_{\mathbb{T}} \log
\|\tilde{M}_N(x,E,\omega)\| dx .\] Note that $L_N(E,\omega)>0$. Set
\[
B(x,E,\omega):=\left( \begin{array}{cc}
a(x)-E& -b(x) \\
b(x+\omega)& 0
\end{array}\right ),
\]and
\[
T_{N}(x,E,\omega)=T_{[1,N]}(x,E,\omega):=B(x+(N-1)\omega,E,\omega)B(x+(N-2)\omega,E,\omega)\cdots
B(x,E,\omega).
\]
Then
\[M_{[1,N]}(x,E,\omega)=T_{[1,N]}(x,E,\omega)\prod_{n=N-1}^0\frac{1}{b(x+(n+1)\omega)},\]
\begin{eqnarray}\label{10003}
\tilde{M}_{[1,N]}(x,E,\omega)&=&\frac{|b(x+N\omega)|^\frac{1}{2}}{|b(x)|^\frac{1}{2}}M_{[1,N]}(x,E,\omega)
\\
&=&\frac{|b(x+N\omega)|^\frac{1}{2}}{|b(x)|^\frac{1}{2}}
\prod_{n=0}^{N-1}\frac{1}{b(x+(n+1)\omega)}
T_{[1,N]}(x,E,\omega)\nonumber,
\end{eqnarray}
\be\label{16001}\|\tilde{M}_{[1,N]}(x,E,\omega)\|
=\prod_{n=0}^{N-1}\frac{1}{|b(x+n\omega)b(x+(n+1)\omega)|^{\frac{1}{2}}}\|T_{[1,N]}(x,E,\omega)\|,\ee
and
\begin{equation}\label{10004}
\log\|\tilde{M}_{[1,N]}(x,E,\omega)\|=\log
\|T_{[1,N]}(x,E,\omega)\|-\frac{1}{2}\sum_{n=0}^{N-1}\log
|b(x+n\omega)b(x+(n+1)\omega)| .
\end{equation}Note also for the future references that \be \label{14001} |\det
T_{[1,N]}(x,E,\omega)|=\prod_{n=0}^{N-1}|b(x+n\omega)||b(x+(n+1)\omega)|.\ee
Combining (\ref{10004}) with Remark \ref{13002}, one concludes that
$\frac{1}{N}\log\|T_{[1,N]}(x,E,\omega)\|$ is integrable,
\be\label{10005} J_N(E,\omega):= \frac{1}{N} \int_{\mathbb{T}} \log
\|T_{[1,N]}(x,\omega )\|  dx =L_N(E,\omega)+D.\ee  Due to the
subadditive property, the limits
\begin{eqnarray}\label{10006}
L(E,\omega)&=& \lim\limits_{N\to \infty} \int_{\mathbb{T}}
\frac{1}{N} \log \| M_N(x,E,\omega ) \| dx=\lim\limits_{N\to \infty}
\int_{\mathbb{T}} \frac{1}{N} \log \| \tilde{M}_N(x,E,\omega ) \|
dx\\ &=&\lim\limits_{N\to \infty} L_N(E,\omega)\nonumber,
\end{eqnarray}
\begin{equation}\label{10007}
J(E,\omega)= \lim\limits_{N\to \infty} \int_{\mathbb{T}} \frac{1}{N}
\log \| T_N(x,E,\omega)\|  dx=\lim\limits_{N\to \infty}
J_N(E,\omega)=L(E,\omega)+D
\end{equation}
exist. Moreover, $L(E,\omega)\geq 0$. Fix some $\alpha>1$.
Throughout this paper we assume that $\omega\in(0,1)$ satisfies the
Diophantine condition
\begin{equation}\label{13001}
\|n\omega\| \geq \frac{C_{\omega}}{n(\log n)^\alpha}\ \ \mbox{for
all}\ n.\end{equation} It is well known that for a fixed $\alpha>1$
almost every $\omega$ satisfies (\ref{13001}).

 The main theorem in this paper is
\begin{theorem}\label{mainthm}
Assume $L(E_0)>0$. Then there exists $\rho_0>0$ depending on
$a(x),b(x),\omega$ and $E_0$ such that for any $E,E'\in (E_0-\rho_0,
E_0+\rho_0)$ holds
\[|L(E)-L(E')|=|J(E)-J(E')|< |E-E'|^{\beta},\] where
 the constant $\beta$ here depends on $a(x), b(x),\omega$, i.e.
$\beta=\beta(a,b,\omega)$, but does not depend on $E_0$.
\end{theorem}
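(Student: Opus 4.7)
The plan is to implement the Goldstein--Schlag scheme as refined by Bourgain--Jitomirskaya, adapted to the Jacobi setting where the transfer matrix is non-unimodular because of the possible zeros of $b$. By (\ref{10005}) and (\ref{10007}), $L(E)-L(E')=J(E)-J(E')$, so it suffices to work with the polynomial cocycle $T_N$, whose entries are analytic in $x\in\TT$ and polynomial of degree $\le N$ in $E$.

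First, I would establish a Large Deviation Theorem for $T_N$: there exist $\sigma,\tau\in(0,1)$, depending only on $a,b,\omega$, such that for all sufficiently large $N$,
\[
\mes\Bigl\{x\in\TT:\bigl|\tfrac{1}{N}\log\|T_N(x,E,\omega)\|-J_N(E,\omega)\bigr|>N^{-\tau}\Bigr\}<\exp(-N^\sigma).
\]
The proof rests on the subharmonicity of $\log\|T_N(\cdot,E,\omega)\|$ (no poles arise since $T_N$ avoids the denominator $b(x+\omega)$ present in $A$), a BMO/Cartan-type estimate on a complex strip, and the strong Diophantine assumption (\ref{13001}) to discretize the averaging along the orbit $\{x+n\omega\}$. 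Passing to $\tilde M_N$ via (\ref{10004}) requires controlling $\frac{1}{N}\sum_{n=0}^{N-1}\log|b(x+n\omega)b(x+(n+1)\omega)|$ uniformly outside a small exceptional set; since $\log|b|\in L^2(\TT)$ (Remark \ref{13002}(4)) and since $\omega$ is Diophantine, the orbit of $x$ avoids any small neighborhood of the zero set of $b$ with good quantitative bounds, yielding the same LDT for $\tfrac{1}{N}\log\|\tilde M_N\|$ about $L_N$.

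Second, I would apply the Avalanche Principle for unimodular $2\times 2$ matrices: if $\|A_j\|\ge\mu\gg1$ and $\|A_{j+1}A_j\|\ge\mu^{-1/2}\|A_{j+1}\|\|A_j\|$ for $j=1,\dots,n-1$, then
\[
\Bigl|\log\|A_n\cdots A_1\|+\sum_{j=2}^{n-1}\log\|A_j\|-\sum_{j=1}^{n-1}\log\|A_{j+1}A_j\|\Bigr|\lesssim\frac{n}{\mu}.
\]
Factoring $\tilde M_{2N}$ into two blocks $\tilde M_N$ and chaining the LDT with the AP on most $x$ gives the standard telescoping bound
\[
|L_{2N}(E)-L_N(E)|\le C\exp(-cN^\sigma),
\]
hence by iteration $|L(E)-L_N(E)|\le C\exp(-cN^\sigma)$ for all $N\ge N_0(a,b,\omega,E_0)$, provided $L(E)\ge\tfrac12 L(E_0)>0$, a condition that persists on some interval $(E_0-\rho_0,E_0+\rho_0)$ by the upper semicontinuity of $L$ plus the quantitative convergence of $L_N$ to $L$.

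Third, to obtain Hölder continuity in $E$, I would use the polynomial dependence of $T_N(x,E,\omega)$ on $E$, which gives the Lipschitz-in-$E$ estimate
\[
|L_N(E)-L_N(E')|\le C_1 N\,|E-E'|.
\]
Writing
\[
|L(E)-L(E')|\le |L(E)-L_N(E)|+|L_N(E)-L_N(E')|+|L_N(E')-L(E')|
\]
and optimizing $N\sim|\log|E-E'||^{1/\sigma}$ yields $|L(E)-L(E')|\le|E-E'|^\beta$ for some $\beta=\beta(a,b,\omega)>0$ independent of $E_0$. The compact-interval version follows by the usual covering argument, since $\rho_0$ depends continuously on $L(E_0)\ge\inf_I L>0$ and the Hölder exponent is uniform.

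The main obstacle is the LDT in the presence of zeros of $b$: the subharmonic function $\log\|T_N(x,E,\omega)\|$ has large negative excursions where the orbit of $x$ approaches $\{b=0\}$, and its average $J_N$ is not uniformly bounded below without further work. Controlling these excursions along the Diophantine orbit, and hence extracting the correct Riesz mass bound that drives the BMO estimate, is the delicate technical core; once this is in place, the AP step and the passage to Hölder continuity in $E$ follow the standard Goldstein--Schlag/Bourgain--Jitomirskaya template.
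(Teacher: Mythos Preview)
Your proposal has a genuine gap in the third step. The claimed Lipschitz bound
\[
|L_N(E)-L_N(E')|\le C_1 N\,|E-E'|
\]
is false. From the product structure one has $\|T_N(x,E)-T_N(x,E')\|\le N C(p,q)^{N-1}|E-E'|$, and since $\|T_N(x,E)\|$ can be as small as $e^{-C'N}$ on parts of $\TT$, the best uniform pointwise bound on $|\log\|T_N(x,E)\|-\log\|T_N(x,E')\||$ is exponential in $N$, not linear. Integrating gives at best $|L_N(E)-L_N(E')|\le e^{CN}|E-E'|$ (compare the paper's estimate $\le \exp(3L(E_0)N)|E-E'|+\exp(-cL(E_0)N)$ obtained after restricting to a good set). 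With the correct exponential Lipschitz bound and only the subexponential rate $|L(E)-L_N(E)|\le C\exp(-cN^\sigma)$, $\sigma<1$, that you derive from your LDT, the optimization forces $N\sim C^{-1}\log(1/|E-E'|)$ and yields only
\[
|L(E)-L(E')|\lesssim \exp\bigl(-c'(\log(1/|E-E'|))^\sigma\bigr),
\]
which is log-H\"older, not H\"older.

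What the paper does differently---and this is precisely the Bourgain--Jitomirskaya refinement you allude to but do not implement---is to bootstrap the initial LDT to a \emph{sharp} large deviation estimate with rate $\exp(-cL(E)N)$ (Lemma~\ref{22012}), and then iterate the Avalanche Principle to obtain $|L(E)+L_N(E)-2L_{2N}(E)|<\exp(-cL(E)N)$ (Lemma~\ref{62010}). The exponential rate in $N$ here balances against the exponential Lipschitz loss. Moreover, the paper's treatment of the $E$-Lipschitz step (Section~4) uses the sharp upper bound $\log\|\tilde M_k\|\le kL_k+C_6(k\log k)^{1/2}-kF_k(x)$ from Lemma~\ref{22020} on each sub-block, so that the exponential constant is $O(L(E_0))$ rather than the crude $O(\log C(p,q))$; this is what makes the final H\"older exponent $\beta=\beta(a,b,\omega)$ independent of $E_0$. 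Your outline is the Goldstein--Schlag template without the sharp-rate upgrade, and as stated it cannot reach H\"older continuity.
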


\section{Large Deviation Theorem}

 It is convenient to replace
$a(x)$,$b(x)$ by $p(e( x))$ and $q(re(x))$ (with $e(x)=e^{2\pi
ix}$),where $p(z),q(z)$ are analytic function in the annulus
$A_\rho=\{z\in\mathbb{C}:1-\rho< |z|< 1+\rho\}$ which assume only
real values for $|z|=1$. With this convention in place, we will use
the notation $B(z)$, $T_{[1,N]}(z)$ e.t.c..

\begin{lemma}\label{lem12002}
\begin{enumerate}
\item[{\rm{(1)}}]$\sup_{z\in A_{\frac{\rho}{2}}} \| B(z)\|=\sup_{z\in A_{\frac{\rho}{2}}} \left \| \left( \begin{array}{cc}
p(z)-E& -q(z) \\
q(ze(\omega))& 0
\end{array}\right )\right \|\leq C$,where $C=C(p,q)$, and
$C(p,q)$ is the same as in (1) of Remark \ref{13002}.

\item[{\rm{(2)}}]$\sup_{z\in A_{\frac{\rho}{2}}}  \| B^{-1}(z)\|\leq
\frac{C(p,q)}{|q(ze(\omega))q(z))|}$
\end{enumerate}
\end{lemma}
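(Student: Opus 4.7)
The plan is to reduce both estimates to elementary linear algebra combined with the analyticity of $p$ and $q$. For part (1), I would note that $p$ and $q$, being analytic on the open annulus $A_\rho$, are bounded on the compact set $\overline{A_{\rho/2}}$; together with the standing restriction $|E|\leq E_0$, this makes every entry of
\[
B(z)=\left(\begin{array}{cc} p(z)-E & -q(z)\\ q(ze(\omega)) & 0\end{array}\right)
\]
uniformly bounded by a constant depending only on $p,q,E_0$. Since the operator norm of a $2\times 2$ matrix is controlled by a constant times its largest entry, this yields the stated bound. The constant that appears is literally the same as in Remark \ref{13002}(1), because that constant was defined in exactly the same way as the entry-wise bound on $B(z)$.

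For part (2), the key computation is
\[
\det B(z) = (p(z)-E)\cdot 0 - (-q(z))\cdot q(ze(\omega)) = q(z)\,q(ze(\omega)),
\]
from which the explicit inverse reads
\[
B^{-1}(z)=\frac{1}{q(z)\,q(ze(\omega))}\left(\begin{array}{cc} 0 & q(z)\\ -q(ze(\omega)) & p(z)-E\end{array}\right).
\]
The entries of the matrix on the right are uniformly bounded on $A_{\rho/2}$ by the constant from part (1), so $\|B^{-1}(z)\|$ is bounded by $C(p,q)/|q(z)q(ze(\omega))|$ as claimed.

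There is no genuine obstacle here; the lemma is a bookkeeping statement that isolates the only source of singularity in the non-normalized inverse transfer matrix, namely the zeros of $q$. Its role in the sequel will be to quantify precisely how $\|B^{-1}\|$ blows up when $z$ or $ze(\omega)$ approaches a zero of $q$, so that later arguments can exclude the associated bad sets; the honest work of the paper begins only after this point.
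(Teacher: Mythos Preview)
Your argument is correct and essentially identical to the paper's: part (1) is declared obvious there, and for part (2) the paper computes $\det B(z)=q(z)q(ze(\omega))$, writes the explicit inverse $B^{-1}(z)=\frac{1}{q(z)q(ze(\omega))}\left(\begin{smallmatrix}0 & q(z)\\ -q(ze(\omega)) & p(z)-E\end{smallmatrix}\right)$, and reads off the bound. Your additional remarks about why the entries are uniformly bounded merely spell out what the paper leaves implicit.
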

\begin{proof}
(1) is obvious;\\

(2)$\det B(z)=q(ze(\omega))q(z),and\
B^{-1}(z)=\frac{1}{q(z)q(ze(\omega))}\left(
\begin{array}{cc}
0& q(z) \\
-q(ze(\omega))&p(z)-E
\end{array}\right )$, that implies $(2)$.\end{proof}
So
\begin{lemma}\label{12003}
For any $z\in A_{\frac{\rho}{2}}$,
\begin{equation}\label{14002}-C_1(p,q)+\log|q(z)q(ze(\omega))|\leq \log
\|T_{[1,N]}(z)\|-\log \|T_{[1,N]}(ze(\omega))\|\leq C_1(p,q)-\log
|q(ze((N-1)\omega))q(ze(N\omega))|.\end{equation}
\end{lemma}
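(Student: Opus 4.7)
The plan is to exploit a simple conjugation identity relating $T_{[1,N]}(z)$ and $T_{[1,N]}(ze(\omega))$. Writing out the two products side by side,
\[
T_{[1,N]}(z) = B(ze((N-1)\omega))\cdots B(ze(\omega))B(z), \qquad T_{[1,N]}(ze(\omega)) = B(ze(N\omega))B(ze((N-1)\omega))\cdots B(ze(\omega)),
\]
one sees that they share the common middle block $Q := B(ze((N-1)\omega))\cdots B(ze(\omega))$, with $T_{[1,N]}(z) = Q\cdot B(z)$ and $T_{[1,N]}(ze(\omega)) = B(ze(N\omega))\cdot Q$. Eliminating $Q$ yields the key identity
\[
T_{[1,N]}(ze(\omega)) = B(ze(N\omega))\, T_{[1,N]}(z)\, B^{-1}(z),
\]
together with its rearrangement $T_{[1,N]}(z) = B^{-1}(ze(N\omega))\, T_{[1,N]}(ze(\omega))\, B(z)$.

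Both halves of (\ref{14002}) then follow by taking operator norms and invoking Lemma \ref{lem12002}. Using $\|B\|\leq C(p,q)$ from part (1) and $\|B^{-1}(w)\|\leq C(p,q)/|q(w)q(we(\omega))|$ from part (2), the first identity gives
\[
\|T_{[1,N]}(ze(\omega))\| \leq \frac{C(p,q)^2}{|q(z)q(ze(\omega))|}\,\|T_{[1,N]}(z)\|,
\]
which, after taking logarithms and rearranging, is the left-hand inequality in (\ref{14002}) with $C_1(p,q) := 2\log C(p,q)$. The upper bound follows in exactly the same way from the inverse form of the identity: the role of $\|B^{-1}\|$ is now played at the other endpoint, so its bound contributes a denominator factor of the form $|q(\cdot)q(\cdot\, e(\omega))|$ at that endpoint, producing the $q$-factors displayed in (\ref{14002}).

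No real obstacle is expected here: the whole argument reduces to a two-line norm estimate once the conjugation identity has been recorded. The only care needed is in the bookkeeping of the endpoint indices of the $q$-factors so that the final bound matches the precise form stated in (\ref{14002}).
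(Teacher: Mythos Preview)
Your proposal is correct and follows exactly the paper's approach: the same conjugation identity $T_{[1,N]}(ze(\omega)) = B(ze(N\omega))\,T_{[1,N]}(z)\,B^{-1}(z)$ (and its rearrangement) combined with the norm bounds from Lemma~\ref{lem12002}, yielding $C_1(p,q)=2\log C(p,q)$. Your own caution about endpoint bookkeeping is apt---that is the only place where care is needed---but otherwise there is nothing to add.
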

\begin{proof}$T_{[1,N]}(ze(\omega))=B(ze(N\omega))T_{[1,N]}(z)B^{-1}(z)$,so
$\|T_{[1,N]}(ze(\omega))\|\leq C(p,q) \|T_{[1,N]}(z)\|
\frac{C(p,q)}{|q(z)q(ze(\omega))|}.$
\begin{displaymath}
\Rightarrow \log \|T_{[1,N]}(ze(\omega))\|\leq 2\log C(p,q) +\log
\|T_{[1,N]}(z)\|-\log |q(z)q(ze(\omega))| .\end{displaymath}
Similarly, we have
\begin{displaymath}
\log \|T_{[1,N]}(z)\| \leq 2\log C(p,q) +\log
\|T_{[1,N]}(ze(\omega))\|-\log |q(ze((N-1)\omega))q(ze(N\omega))|.
\end{displaymath}
Thus
\[-C_1+\log|q(z)q(ze(\omega))|\leq \log \|T_{[1,N]}(z)\|-\log
\|T_{[1,N]}(ze(\omega))\|\leq C_1-\log
|q(ze((N-1)\omega))q(ze(N\omega))|,\]where $C_1=C_1(p,q)$.
\end{proof}

\begin{corollary}\label{12004}
\begin{enumerate}
\item[{\rm{(1)}}]\begin{eqnarray}&&-kC_1+\sum_{m=0}^{k-1}\log|q(ze((m+1)\omega))q(ze((m)\omega))|
\leq \log \|T_{[1,N]}(z)\|-\log \|T_{[1,N]}(ze(k\omega))\|
\nonumber\\&\ &\ \ \ \ \ \ \ \ \ \ \ \ \ \ \ \ \ \ \leq
kC_1-\sum_{m=0}^{k-1}\log|q(ze((N+m-1)\omega))q(ze((N+m)\omega))|\nonumber\end{eqnarray}
\item[{\rm{(2)}}]\begin{eqnarray}&&-KC_1+\sum_{k=0}^{K-1}\frac{K-k}{K}\log|q(ze(k\omega))q(ze((k+1)\omega))|
\leq \log \|T_{[1,N]}(z)\|-\frac{1}{K}\sum_{k=1}^{K}\log
\|T_{[1,N]}(ze(k\omega))\|\nonumber\\&\ &\ \ \ \ \ \ \ \ \ \ \ \ \ \
\ \  \ \leq
KC_1-\sum_{k=0}^{K-1}\frac{K-k}{K}\log|q(ze((k-1+N)\omega))q(ze((k+N)\omega))|.\nonumber\end{eqnarray}\end{enumerate}\end{corollary}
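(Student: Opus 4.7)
The plan is to derive both parts by iterating Lemma \ref{12003}. For part (1), I would write the difference as a telescoping sum
\[\log\|T_{[1,N]}(z)\| - \log\|T_{[1,N]}(ze(k\omega))\| = \sum_{m=0}^{k-1}\left(\log\|T_{[1,N]}(ze(m\omega))\| - \log\|T_{[1,N]}(ze((m+1)\omega))\|\right),\]
and apply the two-sided bound of Lemma \ref{12003} to each summand with $z$ replaced by $ze(m\omega)$. Summing the resulting one-step estimates over $m = 0, 1, \dots, k-1$ produces exactly the inequalities claimed in (1): the lower bound accumulates $\log|q(ze(m\omega))q(ze((m+1)\omega))|$, the upper bound accumulates $-\log|q(ze((m+N-1)\omega))q(ze((m+N)\omega))|$, and the constants compound into $\pm kC_1$.

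For part (2), the plan is to average the inequalities from (1) over $k = 1, \dots, K$ via
\[\log\|T_{[1,N]}(z)\| - \frac{1}{K}\sum_{k=1}^{K}\log\|T_{[1,N]}(ze(k\omega))\| = \frac{1}{K}\sum_{k=1}^{K}\left(\log\|T_{[1,N]}(z)\| - \log\|T_{[1,N]}(ze(k\omega))\|\right).\]
The key combinatorial step is to exchange the order of the ensuing double sum: the term $\log|q(ze(m\omega))q(ze((m+1)\omega))|$ is picked up by precisely those $k$ with $k \geq m+1$, i.e.\ for $K-m$ values of $k \in \{1,\dots,K\}$, so after dividing by $K$ it inherits the triangular weight $\frac{K-m}{K}$; relabeling $m$ as $k$ gives the $\frac{K-k}{K}$ appearing in the claim. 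On the constant side, $\frac{1}{K}\sum_{k=1}^{K}(\pm k)C_1 = \pm\frac{K+1}{2}C_1$, which is bounded (wastefully but sufficiently) by $\pm KC_1$ since $C_1 \geq 0$ and $K\geq 1$. The upper bound is handled identically, the only change being that the $q$-factors carry the $N$-shifted arguments $ze((k-1+N)\omega),\ ze((k+N)\omega)$ inherited from the right-hand side of Lemma \ref{12003}.

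No analytic difficulty is expected; the entire argument is a mechanical iteration plus summation by parts. The only place requiring genuine care is the index bookkeeping in part (2), both in verifying the multiplicity $K-m$ when exchanging the double sum and in correctly tracking which $q$-arguments pick up the $N$-shift on the upper-bound side versus which remain unshifted on the lower-bound side.
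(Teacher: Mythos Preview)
Your proposal is correct and matches the paper's own argument essentially line for line: part (1) is the telescoping sum of Lemma~\ref{12003} (the paper just says ``obvious''), and part (2) is the average of (1) over $k=1,\dots,K$ followed by the same exchange of summation yielding the weights $\frac{K-k}{K}$ and the same crude bound $\frac{1}{K}\sum_{k=1}^K kC_1\leq KC_1$. There is nothing to add.
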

\begin{proof}
(1) is obvious;\\

(2)For $1\leq k\leq
K$,\begin{eqnarray}&&-\frac{kC_1}{K}+\sum_{m=0}^{k-1}\frac{1}{K}\log|q(ze((m+1)\omega))q(ze((m)\omega))|
\leq \frac{1}{K}\log \|T_{[1,N]}(z)\|-\frac{1}{K}\log
\|T_{[1,N]}(ze(k\omega))\| \nonumber\\&\ &\ \ \ \ \ \ \ \ \ \ \ \ \
\ \ \ \ \ \leq
\frac{kC_1}{K}-\sum_{m=0}^{k-1}\frac{1}{K}\log|q(ze((N+m-1)\omega))q(ze((N+m)\omega))|\nonumber.\end{eqnarray}
As
\[\sum_{k=1}^K\sum_{m=0}^{k-1}\frac{1}{K}\log|q(ze((m+1)\omega))q(ze((m)\omega))|=\sum_{k=0}^{K-1}\frac{K-k}{K}\log|q(ze((k+1)\omega))q(ze((k)\omega))|
\]
and \[\sum_{k=1}^K\frac{kC_1}{K}\leq \sum_{k=1}^K C_1=KC_1,\] then
\[\log \|T_{[1,N]}(z)\|-\sum_{k=1}^K\frac{1}{K}\log
\|T_{[1,N]}(ze(k\omega))\|\geq
-KC_1+\sum_{k=0}^{K-1}\frac{K-k}{K}\log|q(ze((k+1)\omega))q(ze((k)\omega))|.
\]
Also\[ \log \|T_{[1,N]}(z)\|-\sum_{k=1}^K\frac{1}{K}\log
\|T_{[1,N]}(ze(k\omega))\|\leq
KC_1-\sum_{k=0}^{K-1}\frac{K-k}{K}\log|q(ze((k-1+N)\omega))q(ze((k+N)\omega))|.\]
\end{proof}

\begin{lemma}
\label{lem:riesz} Let $u:\Omega\to \IR$ be a subharmonic function on
a domain $\Omega\subset\IC$. Suppose that $\partial \Omega$ consists
of finitely many piece-wise $C^1$ curves. There exists a positive
measure $\mu$ on~$\Omega$ such that for any $\Omega_1\Subset \Omega$
(i.e., $\Omega_1$ is a compactly contained subregion of~$\Omega$)
\begin{equation}
\label{eq:rieszrep} u(z) = \int_{\Omega_1}
\log|z-\zeta|\,d\mu(\zeta) + h(z)
\end{equation}
where $h$ is harmonic on~$\Omega_1$ and $\mu$ is unique with this
property. Moreover, $\mu$ and $h$ satisfy the bounds \be
\mu(\Omega_1) &\le& C(\Omega,\Omega_1)\,(\sup_{\Omega} u - \sup_{\Omega_1} u) \label{21002} \\
\|h-\sup_{\Omega_1}u\|_{L^\infty(\Omega_2)} &\le&
C(\Omega,\Omega_1,\Omega_2)\,(\sup_{\Omega} u - \sup_{\Omega_1} u)
\label{21003} \ee for any $\Omega_2\Subset\Omega_1$.
\end{lemma}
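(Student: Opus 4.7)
The plan is to apply the classical Riesz decomposition for subharmonic functions on a slightly larger subregion $\Omega_1\Subset\Omega_0\Subset\Omega$, and then extract both quantitative bounds by comparing $u$ to its harmonic majorant on $\Omega_0$ via the Green's function $g_{\Omega_0}$ and harmonic measure $\omega_{\cdot}^{\Omega_0}$.

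\emph{Existence and uniqueness.} Since $u$ is subharmonic, $\Delta u$ (interpreted distributionally) is a nonnegative Radon measure on $\Omega$; set $d\mu:=(2\pi)^{-1}\Delta u$. For $\Omega_1\Subset\Omega$ let $v(z):=\int_{\Omega_1}\log|z-\zeta|\,d\mu(\zeta)$. Because $\Delta(\log|\cdot-\zeta|)=2\pi\delta_\zeta$, one checks that $\Delta v=2\pi\,\mu|_{\Omega_1}$ as distributions, so $h:=u-v$ satisfies $\Delta h=0$ on $\Omega_1$ and is therefore harmonic by Weyl's lemma. Uniqueness follows because any other decomposition would yield a measure $\mu-\mu'$ with logarithmic potential harmonic on $\Omega_1$; taking distributional Laplacians forces $\mu|_{\Omega_1}=\mu'|_{\Omega_1}$ and hence $h=h'$.

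\emph{Bound on $\mu(\Omega_1)$.} Choose an intermediate $\Omega_1\Subset\Omega_0\Subset\Omega$ with piecewise-$C^1$ boundary (a small thickening of $\overline{\Omega_1}$). The Riesz representation on $\Omega_0$ reads
\[
u(w)=\int_{\partial\Omega_0}u(\zeta)\,d\omega_w^{\Omega_0}(\zeta)-\int_{\Omega_0}g_{\Omega_0}(\zeta,w)\,d\mu(\zeta),\qquad w\in\Omega_0,
\]
with $g_{\Omega_0}\ge 0$. Select $z_1\in\overline{\Omega_1}$ with $u(z_1)=\sup_{\overline{\Omega_1}}u$ (such a point exists by upper semicontinuity). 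Since $\int u\,d\omega_{z_1}\le\sup_\Omega u$, setting $w=z_1$ gives
\[
\int_{\Omega_1}g_{\Omega_0}(\zeta,z_1)\,d\mu(\zeta)\le \sup_\Omega u-\sup_{\Omega_1}u.
\]
On the compact set $\overline{\Omega_1}$, the Green's function $g_{\Omega_0}(\cdot,z_1)$ is continuous and strictly positive off the diagonal and has a $+\infty$ logarithmic pole at $z_1$; hence it is bounded below by a positive constant $c=c(\Omega,\Omega_1)$, uniformly in $z_1\in\overline{\Omega_1}$. Dividing by $c$ yields the mass estimate (\ref{21002}).

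\emph{Bound on $h$.} Writing $g_{\Omega_0}(\zeta,z)=-\log|z-\zeta|+r_{\Omega_0}(\zeta,z)$ with a regular part $r_{\Omega_0}$ continuous on $\Omega_0\times\Omega_0$, substitution into the Riesz formula gives
\[
h(z)=h_0(z)-\int_{\Omega_1}r_{\Omega_0}(\zeta,z)\,d\mu(\zeta)-\int_{\Omega_0\setminus\Omega_1}g_{\Omega_0}(\zeta,z)\,d\mu(\zeta),
\]
where $h_0(z):=\int_{\partial\Omega_0}u\,d\omega_z^{\Omega_0}$ is harmonic on $\Omega_0$. For $z\in\Omega_2\Subset\Omega_1$, the second and third terms are pointwise bounded by $C(\Omega,\Omega_1,\Omega_2)\,\mu(\Omega_0)$; applying the previous mass bound with $\Omega_0$ in place of $\Omega_1$, together with $\sup_{\Omega_0}u\ge\sup_{\Omega_1}u$, controls $\mu(\Omega_0)$ by $C'(\sup_\Omega u-\sup_{\Omega_1}u)$. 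To handle $h_0$, the maximum principle applied to the subharmonic $u-h_0$ (whose boundary values on $\partial\Omega_0$ are $\le 0$) gives $u\le h_0$ on $\Omega_0$, so $w:=\sup_\Omega u-h_0\ge 0$ is harmonic on $\Omega_0$ with $w(z_1)\le \sup_\Omega u-u(z_1)=\sup_\Omega u-\sup_{\Omega_1}u$. Harnack's inequality on $\overline{\Omega_2}\Subset\Omega_0$ produces $\sup_{\Omega_2}w\le C(\Omega,\Omega_1,\Omega_2)\,w(z_1)$, and combined with $h_0\le\sup_\Omega u$ this gives $|h_0-\sup_{\Omega_1}u|\le C(\sup_\Omega u-\sup_{\Omega_1}u)$ on $\Omega_2$. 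Collecting the three pieces yields (\ref{21003}).

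The main technical obstacle is the two-sided estimate on $h$: the upper bound is essentially routine, but the matching lower bound requires promoting the single-point inequality $h_0(z_1)\ge\sup_{\Omega_1}u$ (which comes from subharmonicity) to a uniform estimate on $\Omega_2$. Harnack's inequality applied to the nonnegative harmonic function $\sup_\Omega u-h_0$ is the key device, and it is precisely here that the compact containments $\Omega_2\Subset\Omega_1\Subset\Omega$ and the piecewise-$C^1$ regularity of $\partial\Omega$ (needed to construct the intermediate $\Omega_0$ with a well-behaved Green's function) are exploited in an essential way.
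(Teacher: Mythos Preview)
The paper does not give its own proof of this lemma; it simply cites Lemma~2.2 of~[GS1]. Your proposal supplies a correct self-contained argument, and it follows what is essentially the standard route (and presumably that of~[GS1]): the Riesz/Green representation on an intermediate domain $\Omega_0$, a uniform positive lower bound on $g_{\Omega_0}$ over $\overline{\Omega_1}\times\overline{\Omega_1}$ for the mass estimate, and Harnack's inequality applied to the nonnegative harmonic function $\sup_\Omega u-h_0$ to upgrade the single-point inequality $h_0(z_1)\ge\sup_{\Omega_1}u$ to a uniform two-sided bound on $h$.

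Two small points worth tightening. First, when you invoke ``the previous mass bound with $\Omega_0$ in place of $\Omega_1$'' to control $\mu(\Omega_0)$, that argument itself required an auxiliary domain strictly between the two; so you should fix from the outset a chain $\Omega_2\Subset\Omega_1\Subset\Omega_0\Subset\Omega_0'\Subset\Omega$ and run the Green-function argument on $\Omega_0'$. Second, your Harnack step should be applied on the compact $\overline{\Omega_1}\Subset\Omega_0$ (which contains both $z_1$ and $\overline{\Omega_2}$), not on $\overline{\Omega_2}$ alone, since $z_1$ need not lie in $\Omega_2$; the conclusion $\sup_{\Omega_2}w\le C\,w(z_1)$ then follows. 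Neither issue affects the validity of the argument.
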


For the proof See Lemma 2.2 in [GS1].

\begin{theorem}\label{22002} Let $u$ be a subharmonic function
defined in the annulus $A_\rho$. Suppose furthermore that
$|u(z)|\leq 1$. Then for any $1-\frac{\rho}{2}\leq r\leq
1+\frac{\rho}{2}$\be
\mes(\{x:|\sum_{k=1}^nu(re(x-k\omega))-n<u(re(\cdot))>|>\delta
n\})<\exp(-c\delta n+s_n),\ee where
$<u(re(\cdot))>:=\int_0^1u(re(y))dy$, $s_n\leq C(\log n)^A$ for
general n and $s_n \leq C\log n$ if $n=q_j$ for any j.\end{theorem}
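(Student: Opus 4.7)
The plan is to follow the Goldstein--Schlag / Bourgain--Jitomirskaya scheme for large deviations of sums of subharmonic functions along Diophantine rotations. By Lemma~\ref{lem:riesz}, I choose a slightly smaller annulus $\Omega_1\Subset A_\rho$ containing the circle $\{|z|=r\}$ in its interior, and write
\[
u(z) = \int_{\Omega_1}\log|z-\zeta|\,d\mu(\zeta) + h(z),
\]
with $h$ harmonic on $\Omega_1$ and $\mu$ a positive measure on $\Omega_1$. The hypothesis $|u|\le 1$ combined with (\ref{21002})--(\ref{21003}) gives $\mu(\Omega_1) + \|h\|_{L^\infty(\Omega_2)} \le C(\rho)$ on some $\Omega_2\Subset\Omega_1$ still containing $\{|z|=r\}$. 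This splits the deviation $u(re(\cdot)) - \langle u(re(\cdot))\rangle$ into a harmonic piece and a logarithmic-potential piece, which I treat separately.

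The harmonic piece is essentially trivial. Since $h$ is bounded and harmonic on an annular neighborhood of $\{|z|=r\}$, its Laurent/Fourier expansion $h(re(x)) = \sum_{m\in\ZZ} c_m(r)\,e^{2\pi imx}$ has exponentially decaying coefficients $|c_m(r)|\le Ce^{-\kappa|m|}$ for some $\kappa=\kappa(\rho)>0$. Hence
\[
\Big|\sum_{k=1}^n h(re(x-k\omega)) - n\langle h(re(\cdot))\rangle\Big| \le \sum_{m\ne 0}|c_m(r)|\,\min\!\bigl(n,\,2\|m\omega\|^{-1}\bigr),
\]
and the Diophantine bound (\ref{13001}) gives $\|m\omega\|^{-1}\lesssim |m|(\log|m|)^\alpha$, which is swamped by $e^{-\kappa|m|}$; the resulting deterministic pointwise bound is $O(1)$ uniformly in $n$ and $x$.

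The substance of the theorem lies in the logarithmic potential $P(z) = \int\log|z-\zeta|\,d\mu(\zeta)$. By Fubini,
\[
\sum_{k=1}^n P(re(x-k\omega)) - n\langle P(re(\cdot))\rangle = \int_{\Omega_1}\sum_{k=1}^n g_\zeta(x-k\omega)\,d\mu(\zeta),
\]
where $g_\zeta(x) := \log|re(x)-\zeta| - \int_0^1\log|re(y)-\zeta|\,dy$. The family $\{g_\zeta\}_{\zeta\in\Omega_1}$ is uniformly bounded in $\BMO(\TT)$ (a standard fact about slices of the logarithmic kernel), with $\BMO$-norm $\lesssim 1$ independent of $\zeta$. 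I then invoke the John--Nirenberg-type LDT of Bourgain--Jitomirskaya for Birkhoff sums of $\BMO$ functions along Diophantine rotations: for each $\zeta$, the set where $|\sum_{k=1}^n g_\zeta(\cdot-k\omega)|>\delta n/2$ has measure at most $\exp(-c\delta n + s_n)$. Passing from the per-$\zeta$ estimate to the $\mu$-integral is done either by Minkowski in $L^{2p}$ (followed by Chebyshev, optimizing in $p$) or by a dyadic decomposition of $\mu$ with a union bound, using that $\mu(\Omega_1)=O(1)$; this produces the claimed bound $\mes\{\cdot\} \le \exp(-c\delta n + s_n)$ with $s_n\lesssim (\log n)^A$.

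The main obstacle is the $\BMO$/John--Nirenberg step itself: one must tame the ``resonant'' Fourier modes $m$ of $g_\zeta$ with $\|m\omega\|\approx 1/n$, where ergodic cancellation fails and naive Fourier estimates lose a factor $n$. The refined Bourgain--Jitomirskaya argument in \cite{BJ} dyadically decomposes $g_\zeta$ into frequency-localized pieces, isolates the resonant band using the continued-fraction denominators $q_s$, and treats the smooth and singular parts via different mechanisms (analytic Fourier decay vs.\ Cartan-type sublevel control of the logarithm). The $(\log n)^A$ loss in $s_n$ is precisely the accumulated cost of these resonances under the weakened Diophantine condition (\ref{13001}). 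The improvement $s_n\le C\log n$ when $n=q_j$ is the payoff of choosing $n$ to be a convergent denominator: then $\{k\omega\}_{k=1}^{q_j}$ has sharp discrepancy $O(1/q_j)$, no resonant frequency band meets the relevant dyadic range, and the losses collapse to a single logarithm.
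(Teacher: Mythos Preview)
The paper does not actually prove this theorem: it simply refers the reader to Theorem~3.8 in \cite{GS}. So there is no in-paper argument to compare against; your sketch is an attempt to reconstruct the Goldstein--Schlag proof, and in broad strokes it is on the right track (Riesz decomposition, trivial treatment of the harmonic piece via exponential Fourier decay, and a John--Nirenberg/BMO mechanism for the logarithmic potential combined with the Diophantine hypothesis).

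One remark on the architecture: the argument in \cite{GS} does not proceed per-$\zeta$ and then integrate. Instead the Riesz representation is used once, to deduce the uniform Fourier decay $|\widehat{u}(k)|\lesssim |k|^{-1}$ for the full function $u(re(\cdot))$; the Birkhoff sum is then analyzed directly in Fourier space, splitting frequencies at a scale dictated by the continued-fraction denominators $q_s$, with the low-frequency block handled deterministically via the Diophantine bound and the high-frequency tail handled as a BMO function through John--Nirenberg. Your per-$\zeta$ route can be made to work via the Minkowski-in-$L^p$ argument you mention, but the ``dyadic decomposition of $\mu$ with a union bound'' alternative you offer is not sound as stated: the exceptional sets depend on the continuum parameter $\zeta$, and a naive union bound over uncountably many $\zeta$ gives nothing. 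If you keep the per-$\zeta$ formulation, commit to the $L^p$/moment route; otherwise, it is cleaner to follow \cite{GS} and work with the Fourier coefficients of $u$ directly, which also makes the $(\log n)^A$ loss and the $n=q_j$ improvement transparent.
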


For the proof see Theorem
3.8 in [GS].

\begin{remark}\begin{enumerate}
\item[{\rm{(1)}}]The constants $c,C$ here do not depend on $\delta$.

\item[{\rm{(2)}}]Actually, the condition $|u(z)|\leq 1$ can be replaced by
$u(z)=\int\log|z-\zeta|d\mu(\zeta)+h(z)$,with $\|\mu\|+\|h\|\leq
C$, see the proof of Theorem
3.8 in [GS]
\end{enumerate}
\end{remark}

In what follows we will use the  following version

\begin{theorem}\label{22004} Let $u$ be a subharmonic function
defined in the annulus $A_\rho$. Suppose furthermore that
$u(z)=\int\log|z-\zeta|d\mu(\zeta)+h(z)$, with
$\mu(A_{\frac{\rho}{2}})+\|h\|_{L^{\infty}(A_{\frac{\rho}{2}})}\leq
\hat{C}$. Then for any $1-\frac{\rho}{2}\leq r\leq
1+\frac{\rho}{2}$\be
\mes(\{x:|\sum_{k=1}^nu(re(x+k\omega))-n<u(re(\cdot))>|>\delta
n\})<\exp(-c\delta n).\ee where $c=c(\hat{C},\omega)$.\end{theorem}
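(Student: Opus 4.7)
The strategy is to follow the proof of Theorem~\ref{22002} (Theorem~3.8 in [GS]) almost verbatim, supplying the Riesz representation of $u$ from the hypothesis instead of extracting it via Lemma~\ref{lem:riesz}. As the remark after Theorem~\ref{22002} already flags, the bound $|u|\le 1$ enters that proof only to produce, through Lemma~\ref{lem:riesz}, a decomposition $u(z)=\int \log|z-\zeta|\,d\mu(\zeta)+h(z)$ with $\|\mu\|+\|h\|\le C$; in the present situation this decomposition is handed to us directly, and the resulting constants will be allowed to depend on $\hat{C}$.

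Concretely, I would write $u=u_1+h$ with $u_1(z)=\int_{A_\rho}\log|z-\zeta|\,d\mu(\zeta)$, and let $S_h(x)$, $S_{u_1}(x)$ denote the Birkhoff deviations $\sum_{k=1}^n h(re(x+k\omega))-n\langle h(re(\cdot))\rangle$ and $\sum_{k=1}^n u_1(re(x+k\omega))-n\langle u_1(re(\cdot))\rangle$, respectively. Since $\|h\|_{L^\infty(A_{\rho/2})}\le \hat{C}$ and $h$ is harmonic, its Fourier coefficients on each circle $|z|=r$ with $1-\rho/2\le r\le 1+\rho/2$ decay exponentially, and the standard Fourier/Diophantine argument (using (\ref{13001})) yields $\mes(\{|S_h|>\delta n/2\})<\exp(-c\delta n)$ with $c=c(\hat{C},\omega)$. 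For $S_{u_1}$, Fubini rewrites the deviation as
\[
S_{u_1}(x)=\int_{A_\rho}\Bigl(\sum_{k=1}^n \log|re(x+k\omega)-\zeta|-n\langle \log|re(\cdot)-\zeta|\rangle\Bigr)d\mu(\zeta),
\]
so matters reduce to a uniform-in-$\zeta\in A_{\rho/2}$ deviation estimate for the single log-singular kernel $\log|re(\cdot)-\zeta|$ along the $\omega$-orbit; integration against $\mu$ then costs at most $\mu(A_{\rho/2})\le\hat{C}$.

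The main obstacle is this uniform-in-$\zeta$ estimate for the log-singular kernel. I expect to control it using the continued-fraction convergents $q_j$ of $\omega$ together with (\ref{13001}): a pigeon-hole argument bounds the number of indices $k\in[1,n]$ for which $re(x+k\omega)$ can lie in any small disc around a given $\zeta$, and the integrability of $\log$ then produces the deviation bound. The [GS] proof yields a residual correction of size $\exp(s_n)$ with $s_n\lesssim(\log n)^A$; in the regime of $\delta n$ relevant to the later applications this correction is absorbed into a slightly smaller $c=c(\hat{C},\omega)$, producing the stated form $\exp(-c\delta n)$. Combining the two pieces via the triangle inequality then completes the argument.
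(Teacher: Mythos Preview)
Your approach matches the paper's: the paper gives no separate proof of Theorem~\ref{22004}, merely presenting it as the version of Theorem~\ref{22002} obtained by feeding in the Riesz decomposition directly (as flagged in the remark preceding it), with the constants now depending on $\hat C$. Your sketch elaborates precisely this route.

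One point worth tightening: the passage from the [GS] bound $\exp(-c\delta n+s_n)$ with $s_n\le C(\log n)^A$ to the clean $\exp(-c\delta n)$ is not literally valid for all $n$ and $\delta$ simultaneously, and your hedge ``in the regime of $\delta n$ relevant to the later applications'' is the honest statement. The paper is equally informal here; in the subsequent applications (e.g.\ Remark~\ref{23006}(3), Theorem~\ref{22006}) a threshold $\check K=\check K(p,q,\delta)$ or $\check N$ is always imposed, which is exactly what is needed to absorb $s_n$.
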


Set $u_N(z,E,\omega)=\frac{1}{N}\log \|T_N(z,E,\omega)\|$,
$\tilde{u}_N(z,E,\omega)=\frac{1}{N}\log
\|\tilde{M}_N(z,E,\omega)\|$. Sometimes we use $u_N(z)$ or $u_N$ for
short. It is also the same for $\tilde{u}_N(z,E,\omega)$. Let
$L_{N,r}(E)=<\tilde{u}_N(re(\cdot))>,\ D_r=<\log|q(re(\cdot))|>,\
J_{N,r}(E)=<u_N(re(\cdot))>=L_{N,r}(E)+D_r$. For $r=1$ we use the
notations $L_N(E),\ D,\ J_N(E)$.
\begin{lemma}\label{2200a} Function $u_N(z)$ is subharmonic in
$A_{\rho}$ and obeys $u_N(z)\leq C(p,q)$.
\end{lemma}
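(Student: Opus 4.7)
The plan is to prove the two assertions separately: subharmonicity on the annulus, and the pointwise upper bound from Lemma \ref{lem12002}(1).

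First I would establish that $T_N(z,E,\omega)$ has analytic matrix entries on all of $A_\rho$. Since $p(z)$ and $q(z)$ are analytic in $A_\rho$, each entry of $B(z)=\begin{pmatrix}p(z)-E & -q(z)\\ q(ze(\omega)) & 0\end{pmatrix}$ is analytic in $A_\rho$ (the map $z\mapsto ze(\omega)$ is a rotation preserving $A_\rho$). The matrix product $T_{N}(z)=B(ze((N-1)\omega))\cdots B(z)$ therefore has entries that are polynomials in analytic functions, hence analytic in $A_\rho$.

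Next I would deduce subharmonicity of $u_N(z)=\frac{1}{N}\log\|T_N(z)\|$ by the standard sup-of-holomorphic-functionals trick. For each pair of unit vectors $v,w\in\IC^2$ the scalar function $f_{v,w}(z)=\langle T_N(z)v,w\rangle$ is a $\IC$-linear combination of entries of $T_N(z)$, hence analytic on $A_\rho$, so $\log|f_{v,w}(z)|$ is subharmonic (possibly $-\infty$). Taking the supremum over a countable dense subset of unit vectors gives $\log\|T_N(z)\|=\sup_{v,w}\log|f_{v,w}(z)|$, which is upper semicontinuous since $\|T_N(z)\|$ is continuous; the countable supremum of subharmonic functions with an upper semicontinuous envelope is subharmonic. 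Dividing by $N$ preserves subharmonicity, so $u_N$ is subharmonic on $A_\rho$.

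For the pointwise bound, I would invoke Lemma \ref{lem12002}(1): $\|B(z)\|\le C(p,q)$ uniformly for $z\in A_{\rho/2}$. Since $|e(k\omega)|=1$, the point $ze(k\omega)$ lies in $A_{\rho/2}$ whenever $z$ does, so the submultiplicative estimate gives
\[
\|T_N(z)\|\ \le\ \prod_{k=0}^{N-1}\|B(ze(k\omega))\|\ \le\ C(p,q)^N,
\]
whence $u_N(z)\le \log C(p,q)$, absorbed into the constant $C(p,q)$ of the statement. The main conceptual point is the first paragraph (subharmonicity of $\log\|\cdot\|$ applied to an analytic matrix); everything else is the routine operator-norm chain estimate combined with the rotation invariance of the annulus.
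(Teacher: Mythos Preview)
Your proof is correct and follows essentially the same approach as the paper: the paper's own proof simply asserts that $B(z)$ is analytic in $A_\rho$, hence so is $T_N(z)$, hence $u_N=\frac{1}{N}\log\|T_N\|$ is subharmonic, and then cites Lemma~\ref{lem12002} for the upper bound. You have merely supplied the standard details (the sup-of-holomorphic-functionals argument for subharmonicity and the submultiplicative norm chain) that the paper leaves implicit.
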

\begin{proof}Since $B(z)$ is analytic in $A_\rho$, so is $T_N(z)$.
Therefore $u_N(z)=\frac{1}{N}\log\|T_N(z)\|$ is subharmonic in
$A_\rho$. The estimate $u_N(z)\leq C(p,q)$ follows from Lemma
\ref{lem12002}.\end{proof}

\begin{lemma}\label{2200b}There exists $C_2(q)<+\infty$ s.t.
\[\sup_{x\in \mathbb{T}}u_N(e(x))\geq -C_{2}(q).\]
\end{lemma}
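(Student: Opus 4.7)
The plan is to derive the lower bound simply by integrating and invoking the fact, already established, that $\tilde M_N$ is unimodular so that $L_N \ge 0$.

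First I would recall from Remark \ref{13002}(2) that $\log\|\tilde M_N(x,E,\omega)\| \ge 0$ pointwise on $\mathbb{T}$, since $\tilde M_N$ is unimodular. Integrating this pointwise inequality over $\mathbb{T}$ gives $L_N(E,\omega) \ge 0$ (this is already noted in the excerpt right after the definition of $L_N$).

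Next, I would invoke equation (\ref{10005}), which says
\[
\int_{\mathbb{T}} u_N(e(x),E,\omega)\,dx \;=\; J_N(E,\omega) \;=\; L_N(E,\omega) + D,
\]
where $D = \int_{\mathbb{T}} \log|q(e(\theta))|\,d\theta$ is finite by Remark \ref{13002}(4) (since $q$ is analytic on the annulus and not identically zero, $\log|q|$ is integrable on $\mathbb{T}$), and depends only on $q$. Combining with the previous step,
\[
\int_{\mathbb{T}} u_N(e(x),E,\omega)\,dx \;\ge\; D.
\]

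Finally, since the pointwise supremum of an integrable function always dominates its mean,
\[
\sup_{x\in\mathbb{T}} u_N(e(x),E,\omega) \;\ge\; \int_{\mathbb{T}} u_N(e(x),E,\omega)\,dx \;\ge\; D.
\]
Setting $C_2(q) := \max(0,-D)$ yields $\sup_{x\in\mathbb{T}} u_N(e(x)) \ge -C_2(q)$, as required. Note that $u_N$ is well-defined and not identically $-\infty$ (in fact $\le C(p,q)$ by Lemma \ref{2200a}), so the supremum makes sense.

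There is essentially no obstacle: the only nontrivial input is the integrability of $\log|q|$, which is the classical fact cited in Remark \ref{13002}(4), and the sign of $L_N$, which is built into the unimodular normalization. The constant $C_2(q)$ depends only on $q$ (through $D$), independently of $N$, $E$, and $\omega$, which is what the statement requires.
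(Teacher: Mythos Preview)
Your proof is correct and is actually more direct than the paper's. The paper argues pointwise: from $\|A\|^2\ge|\det A|$ and the formula \eqref{14001} for $\det T_N$ it gets
\[
u_N(e(x))\ \ge\ \frac{1}{2N}\sum_{n=0}^{N-1}\log|q(e(x+n\omega))q(e(x+(n+1)\omega))|,
\]
then uses that the $L^1$-norm of the right-hand side is bounded by $C_2(q):=\int_{\mathbb{T}}\bigl|\tfrac12\log|q(e(x))q(e(x+\omega))|\bigr|\,dx$, so its supremum is at least $-C_2(q)$, and hence so is $\sup_x u_N(e(x))$. You instead bypass the determinant step by invoking the already-recorded facts $L_N\ge 0$ and $J_N=L_N+D$ (equation \eqref{10005}) to conclude $\int u_N\ge D$, and then compare the supremum to the mean. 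Both arguments rest on the same analytic input (integrability of $\log|q|$); yours is shorter, while the paper's pointwise inequality and its particular constant $C_2(q)$ are reused in Remark \ref{25002} for the function $\tfrac12\log|q(re(x))q(re(x+\omega))|$, which is why that slightly longer route is taken there.
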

\begin{proof} One has $\|A\|^2\geq |\det A|$ for any $2\times 2$
matrix. So \[u_N(z)\geq \frac{1}{2N}\log |\det
T_N(z)|=\frac{1}{2N}\sum_{n=0}^{N-1}\log
|q(ze(n\omega))q(ze((n+1)\omega))|,\] see (\ref{14001}). Recall that
$\log|q(e(x))|$ is integrable. So, $\frac{1}{2}\log
|q(e(x))q(e(x+\omega))|$ is integrable. So,
\be\label{14003}\int_{\mathbb{T}}\left |\frac{1}{2}\log
|q(e(x))q(e(x+\omega))|\right |dx=C_{2}(q)<+\infty.\ee Therefore
\[\int_{\mathbb{T}}\left |\frac{1}{2N}\sum_{n=0}^{N-1}\log
|q(e(x+n\omega))q(e(x+(n+1)\omega))|\right |dx\leq
C_{2}(q)<+\infty.\] Hence,
\[\sup_{x\in\mathbb{T}}\frac{1}{2N}\sum_{n=0}^{N-1}\log
|q(e(x+n\omega))q(e(x+(n+1)\omega))|\geq -C_{2}(q).\]
\end{proof}
\begin{lemma}\label{2200c}
One has
\[u_N(z)=\int \log|z-\zeta|d\mu(\zeta)+h(z),\]
where
\[\mu(A_{\frac{\rho}{2}})\leq C_3(p,q),\ \|h\|_{L^{\infty}(A_{\frac{\rho}{2}})}\leq
C_3(p,q).\]
\end{lemma}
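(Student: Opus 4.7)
The plan is to apply the Riesz representation Lemma \ref{lem:riesz} to the subharmonic function $u_N$ on the annulus $A_\rho$, with the key task being to extract bounds on the Riesz measure and harmonic part that are uniform in $N$. The right-hand sides of (\ref{21002}) and (\ref{21003}) involve the oscillation $\sup_\Omega u_N - \sup_{\Omega_1} u_N$, so the entire proof reduces to controlling this quantity independently of $N$.

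Concretely, I would choose $\Omega = A_\rho$, an intermediate annulus $\Omega_1 = A_{3\rho/4}$, and $\Omega_2 = A_{\rho/2}$, noting that $A_{\rho/2} \Subset A_{3\rho/4} \Subset A_\rho$ so all the hypotheses of Lemma \ref{lem:riesz} are met. For the upper bound on $\sup_\Omega u_N$, Lemma \ref{2200a} directly gives $u_N(z) \le C(p,q)$ throughout $A_\rho$. For the lower bound on $\sup_{\Omega_1} u_N$, I would invoke Lemma \ref{2200b}: the unit circle $\TT$ is contained in $\Omega_1$, so
\[
\sup_{\Omega_1} u_N \;\ge\; \sup_{x \in \TT} u_N(e(x)) \;\ge\; -C_2(q).
\]
Subtracting the two estimates yields the $N$-independent oscillation bound $\sup_\Omega u_N - \sup_{\Omega_1} u_N \le C(p,q) + C_2(q)$.

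Feeding this into (\ref{21002}) and (\ref{21003}) and using that the constants there depend only on the geometry of $\Omega, \Omega_1, \Omega_2$, I obtain $\mu(\Omega_1) \le C'(C(p,q)+C_2(q))$ and $\|h - \sup_{\Omega_1} u_N\|_{L^\infty(A_{\rho/2})} \le C''(C(p,q)+C_2(q))$. Since $A_{\rho/2} \subset \Omega_1$, the first inequality forces $\mu(A_{\rho/2}) \le C_3(p,q)$. For the harmonic part, the triangle inequality together with $|\sup_{\Omega_1} u_N| \le \max(C(p,q),C_2(q))$ (which follows from the same upper and lower bounds above) gives $\|h\|_{L^\infty(A_{\rho/2})} \le C_3(p,q)$ after enlarging constants. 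The representation formula on $A_{\rho/2}$ then reads exactly as claimed.

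I do not anticipate a serious obstacle here; the only point that requires mild care is verifying that the constants appearing in (\ref{21002})--(\ref{21003}) genuinely depend only on the geometry of the annuli and not on the particular subharmonic function, but this is precisely the content of Lemma \ref{lem:riesz}. The rest is bookkeeping.
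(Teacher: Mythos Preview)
Your proof is correct and follows exactly the paper's approach: the paper's proof consists of the single sentence ``The statement follows from Lemma~\ref{2200a}, \ref{2200b}, and (\ref{21002}), (\ref{21003}),'' and you have simply spelled out the details of how those ingredients combine.
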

\begin{proof}
The statement follows from Lemma \ref{2200a}, \ref{2200b}, and
(\ref{21002}), (\ref{21003}).
\end{proof}
\begin{remark}\label{25002}By
(\ref{14003}), we have
\[\sup_{x\in\mathbb{T}}\frac{1}{2}\log
|q(re(x))q(re(x+\omega))|\geq -C_{2}(q).\] Then by Lemma
\ref{2200b}, and (\ref{21002}), (\ref{21003}), one has
\[\frac{1}{2}\log
|q(re(x))q(re(x+\omega))|=\int
\log|z-\zeta|d\mu_q(\zeta)+h_q(\zeta),\] where
\[\mu_q(A_{\frac{\rho}{2}})\leq C_q,\ \|h\|_{L^{\infty}(A_{\frac{\rho}{2}})}\leq
C_q.\]
\end{remark}
~\\
~\\
 By Theorem \ref{22004} and Lemma \ref{2200c},
we have
\begin{lemma}\label{22005}For any $1-\frac{\rho}{2}\leq r\leq
1+\frac{\rho}{2}$, $\delta$ and K, \be
\mes(\{x:|\sum_{k=1}^Ku_N(re(x+k\omega))-K<u_N(re(\cdot))>|>\delta
K\})<\exp(-c\delta K),\ee where $c=c(p,q,\omega)$.
\end{lemma}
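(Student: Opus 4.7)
The plan is to prove Lemma~\ref{22005} as a direct application of Theorem~\ref{22004} to the specific subharmonic function $u_N$, using the Riesz decomposition already obtained in Lemma~\ref{2200c}. In other words, this lemma is not a new large deviation estimate but rather the instantiation of the abstract LDT on the particular family of subharmonic functions that govern the cocycle.

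First, I would recall that Lemma~\ref{2200c} provides exactly the hypothesis required by Theorem~\ref{22004}: there exist a positive measure $\mu = \mu_N$ and a harmonic function $h = h_N$ on $A_{\rho/2}$ such that
\[
u_N(z) = \int \log|z - \zeta|\,d\mu(\zeta) + h(z), \qquad \mu(A_{\rho/2}) + \|h\|_{L^\infty(A_{\rho/2})} \le 2C_3(p,q).
\]
The crucial observation — and the only point that deserves emphasis — is that the bound $2C_3(p,q)$ is \emph{independent of $N$}. This uniformity in $N$ was already built into Lemma~\ref{2200c} via the combination of Lemma~\ref{2200a} (uniform upper bound $u_N(z) \le C(p,q)$) and Lemma~\ref{2200b} (uniform lower bound on $\sup_x u_N(e(x))$), passed through the Riesz representation bounds \eqref{21002}--\eqref{21003}.

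Next, with $\hat{C} := 2C_3(p,q)$, I would apply Theorem~\ref{22004} directly: for any $1 - \rho/2 \le r \le 1 + \rho/2$, any $\delta > 0$, and any $K$,
\[
\mes\Bigl(\bigl\{x : \bigl|\sum_{k=1}^K u_N(re(x+k\omega)) - K\langle u_N(re(\cdot))\rangle\bigr| > \delta K\bigr\}\Bigr) < \exp(-c\delta K),
\]
where $c = c(\hat{C},\omega) = c(p,q,\omega)$. This is exactly the conclusion of Lemma~\ref{22005}.

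There is no real obstacle here: the work has already been done in setting up Lemma~\ref{2200c} and Theorem~\ref{22004}. The only thing worth flagging explicitly in the write-up is that the constant $c$ does not depend on $N$ (nor on $E$, once $|E| \le E_0$, since $C_3$ does not), which is what makes this LDT uniform in $N$ and hence usable for subsequent arguments in the paper.
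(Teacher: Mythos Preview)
Your proposal is correct and matches the paper's approach exactly: the paper states Lemma~\ref{22005} as an immediate consequence of Theorem~\ref{22004} combined with Lemma~\ref{2200c}, which is precisely what you do. Your additional remarks on the $N$-independence of the constant $c$ are helpful elaboration but add nothing beyond what the paper intends.
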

\begin{remark}\label{23006} \begin{enumerate}
\item[{\rm{(1)}}]By Remark \ref{25002} and Theorem
\ref{22004}, also for any $1-\frac{\rho}{2}<r<1+\frac{\rho}{2}$,
$\delta$ and K
\[\mes(\{x:|\sum_{k=1}^K\frac{1}{2}\log
|q(re(x+k\omega))q(re(x+(k+1)\omega))|-K<\frac{1}{2}\log
|q(re(x))q(re(x+\omega))|>|>\delta K\})<\exp(-c_q\delta K).
\]

\item[{\rm{(2)}}]It is well-known that  $\int_0^1 \left |
\log |q(re(x))|\right |dx \leq C'_q$, for any $1-\frac{\rho}{2}\leq
r \leq 1+\frac{\rho}{2}$, if $b(x)$ is analytic.

\item[{\rm{(3)}}]Due to (\ref{10004}) one has
\[\tilde{u}_N(re(x))=u_N(re(x))-\frac{1}{2N}\sum_{k=1}^N\log
|q(re(x+k\omega))q(re(x+(k+1)\omega))|.\]Hence,
\[\mes\{x:|\sum_{k=1}^K\tilde{u}_N(re(x+k\omega))-K<\tilde{u}_N(re(\cdot))>|>\delta
K\}\leq\exp(-c\delta K)+K\exp(-c_q\delta K).\]What's more, there
exists $\check{K}=\check{K}(p,q,\delta)$, s.t. for any $K\geq
\check{K}$,
\[\mes\{x:|\sum_{k=1}^K\tilde{u}_N(re(x+k\omega))-K<\tilde{u}_N(re(\cdot))>|>\delta
K\}\leq \exp(-\hat{c}\delta K).\]
\end{enumerate}
\end{remark}
~\\
~\\
\begin{lemma}\label{23007}For any $1-\frac{\rho}{2}\leq r\leq
1+\frac{\rho}{2}$, $\delta$ and  K
\[\mes(\{x:|\sum_{k=0}^{K-1}\frac{K-k}{K}\log|q(re(x+k\omega))q(re((x+k+1)\omega))|-(K+1)D_r|>\delta
K\})\leq K\exp(-c_q\delta K),\] where $c_q$ is as in Remark
\ref{23006}.

\end{lemma}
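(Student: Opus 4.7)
The plan is to reduce the weighted sum to an arithmetic mean of unweighted partial sums and then invoke Remark~\ref{23006}(1). Set $w(x):=\log|q(re(x))q(re(x+\omega))|$, so that $\langle w\rangle=2D_r$. Interchanging the order of summation (using $K-k=\#\{j:k+1\le j\le K\}$),
\[
\sum_{k=0}^{K-1}\frac{K-k}{K}\,w(x+k\omega)\;=\;\frac{1}{K}\sum_{j=1}^{K}S_j(x),\qquad S_j(x):=\sum_{k=0}^{j-1}w(x+k\omega).
\]
Since $\langle S_j\rangle=2jD_r$, the expected value of the full weighted sum is $\frac{1}{K}\sum_{j=1}^{K}2jD_r=(K+1)D_r$, matching the centering constant in the statement.

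Next, Remark~\ref{23006}(1), applied to each $j$ after the trivial index shift $x\mapsto x-\omega$, supplies the large-deviation bound: for every $\sigma>0$ and every $j\in\{1,\dots,K\}$,
\[
\mes\bigl(\{x:|S_j(x)-2jD_r|>\sigma j\}\bigr)\le\exp(-c_q\sigma j),
\]
with $c_q=c_q(p,q,\omega)$ independent of both $\sigma$ and $j$. The key device is to choose $\sigma=\sigma_j:=\delta K/j$, so that the right-hand side becomes the $j$-independent quantity $\exp(-c_q\delta K)$ while the event on the left becomes $\{|S_j(x)-2jD_r|>\delta K\}$.

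A union bound over $j=1,\dots,K$ then gives
\[
\mes\bigl(\{x:\max_{1\le j\le K}|S_j(x)-2jD_r|>\delta K\}\bigr)\le K\exp(-c_q\delta K),
\]
and on the complement one trivially estimates
\[
\Bigl|\frac{1}{K}\sum_{j=1}^{K}S_j(x)-(K+1)D_r\Bigr|\le\frac{1}{K}\sum_{j=1}^{K}\bigl|S_j(x)-2jD_r\bigr|\le\delta K,
\]
which yields exactly the claim. There is no genuine obstacle here; the only point to watch is that the LDT constant $c_q$ in Theorem~\ref{22004} (and hence in Remark~\ref{23006}(1)) is independent of the tolerance parameter, so that the $j$-dependent choice $\sigma_j=\delta K/j$ still produces a uniform exponent $c_q\delta K$, and the union bound costs only the extra factor $K$ appearing in the conclusion.
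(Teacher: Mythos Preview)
Your argument is correct and is essentially identical to the paper's own proof: both rewrite the weighted sum as $\frac{1}{K}\sum_{j=1}^K S_j$ via Abel summation, apply the large-deviation bound from Remark~\ref{23006}(1) to each $S_j$ with tolerance $\delta K/j$ so that the exponent becomes the uniform $c_q\delta K$, and finish with a union bound over $j$. The paper centers first (working with $Q_r=w-2D_r$) whereas you carry the mean $2jD_r$ explicitly, but this is purely cosmetic.
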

\begin{proof}
Set
\[\mathbb{X}_r:=\{x:|\sum_{k=0}^{K-1}\frac{K-k}{K}\log|q(re(x+k\omega))q(re((x+k+1)\omega))|-(K+1)D_r|>\delta
K\},\]and\[Q_r(x)=\log
|q(re(x))q(re(x+\omega))|-\int_{\mathbb{T}}\log
|q(re(x))q(re(x+\omega))|dx=\log |q(re(x))q(re(x+\omega))|-2D_r.\]
Note also that $\sum_{k=0}^{K-1}\frac{K-k}{K}=\frac{K+1}{2}$, and
\[\sum_{k=0}^{K-1}\frac{K-k}{K}\log|q(re(x+k\omega))q(re((x+k+1)\omega))|-(K+1)D_r=\sum_{k=0}^{K-1}\frac{K-k}{K}Q_r(x+k\omega).\]
So\[\left
|\sum_{k=0}^{K-1}\frac{K-k}{K}\log|q(re(x+k\omega))q(re((x+k+1)\omega))|-(K+1)D_r\right
|>\delta K\]
\[\Leftrightarrow |\sum_{k=0}^{K-1}\frac{K-k}{K}Q_r(x+k\omega)|>\delta K,\ \ \mbox{and } <Q_r>=0.\]
Define $S_{k,r}(x)=\sum_{j=0}^{k-1}Q_r(x+j\omega)$, then
\[\sum_{k=0}^{K-1}\frac{K-k}{K}Q_r(x+k\omega)=\frac{1}{K}\sum_{k=1}^{K}S_k(x).\]
Set $\mathbb{X}_{k,r}:=\{x:|S_{k,r}(x)|>\delta K\}$, then
\[\mathbb{X}_r\subseteq \bigcup_{k=1}^K \mathbb{X}_{k,r},\ \ \mes(\mathbb{X}_r)\leq \sum_{k=1}^{K}\mes(\mathbb{X}_{k,r}).\]
Note that
\[\mathbb{X}_{k,r}=\{x:|\sum_{j=0}^{k-1}\log |q(re(x+j\omega))q(re(x+(j+1)\omega))|-k\int_{\mathbb{T}}\log
|q(re(x))q(re(x+\omega))|dx|>\delta K\}.\]  Thus,by Remark
\ref{23006}
\[\mes \mathbb{X}_{k,r}=\{x:|S_{k,r}(x)|>\delta K= \frac{\delta K}{k}k \})\leq
\exp(-c_q\frac{\delta K}{k} k)=\exp(-c_q \delta K).\] Thus
\[\mes \mathbb{X}_r\leq K\times \exp(-c_q \delta K).\]
\end{proof}
\begin{theorem}\label{22006}There exists $\check{N}(p,q,\omega)$ such that for any $N\geq \check{N}$, any $1-\frac{\rho}{2}\leq r\leq
1+\frac{\rho}{2}$ and $\delta<1$
\[\mes(\{x:|\frac{1}{N}\log\|T_N(re(x))\|-J_{N,r}(E) |>\delta
\})<\exp(-\check{c}\delta^2 N),\]where
$\check{c}=\check{c}(p,q,\omega)$.\end{theorem}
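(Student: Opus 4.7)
The plan is to compare the single-point value $u_N(re(x))$ to the shift-average
\[A_K(x) := \frac{1}{K}\sum_{k=1}^K u_N(re(x+k\omega)),\]
to which the deviation estimate of Lemma \ref{22005} applies directly, and then to control the discrepancy $u_N(re(x))-A_K(x)$ via Corollary \ref{12004}(2) together with Lemma \ref{23007}. Dividing the inequality of Corollary \ref{12004}(2) through by $N$ will yield
\[\left| u_N(re(x)) - A_K(x) \right| \leq \frac{KC_1}{N} + \frac{1}{N}\max\bigl(|\Sigma_1(x)|, |\Sigma_2(x)|\bigr),\]
where $\Sigma_1(x)$ and $\Sigma_2(x)$ denote the two weighted log-$|q|$ sums appearing in that corollary; these are precisely the sums controlled by Lemma \ref{23007}.

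I would then combine three bad-set estimates. By Lemma \ref{23007} (applied to $\Sigma_1$ and, after a translation of the argument, to $\Sigma_2$), outside a set of measure at most $2K\exp(-c_q\delta K)$ one has $|\Sigma_j(x)| \leq (K+1)|D_r| + \delta K$ for $j=1,2$. By Lemma \ref{22005}, outside an additional set of measure at most $\exp(-c\delta K)$ one has $|A_K(x) - J_{N,r}(E)| \leq \delta$. The triangle inequality then yields, on a set of measure at least $1 - 3K\exp(-\min(c,c_q)\delta K)$,
\[|u_N(re(x)) - J_{N,r}(E)| \leq \frac{KC_1}{N} + \frac{(K+1)|D_r|}{N} + \frac{\delta K}{N} + \delta.\]

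The final step is to choose the shift length $K = \lfloor c'\delta N\rfloor$ for a sufficiently small $c' = c'(p,q)$ (small enough that $c'\max(C_1, |D_r|, 1)\leq 1$, say), which makes each of the first three terms on the right at most $\delta$, so the right-hand side is bounded by $C\delta$ with $C = C(p,q)$. Replacing $\delta$ by $\delta/C$ and taking $N \geq \check{N}(p,q,\omega)$ large enough to absorb the polynomial prefactor $3K$ into the exponential $\exp(-\min(c,c_q)c'\delta^2 N)$ then yields the asserted bound $\exp(-\check{c}\delta^2 N)$.

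The main technical obstacle is the leap from the linear rate $\exp(-c\delta K)$ supplied by Lemma \ref{22005} to the Gaussian-type rate $\exp(-\check{c}\delta^2 N)$ required in the conclusion. This upgrade is forced by the scaling $K \sim \delta N$, which is simultaneously the largest scaling compatible with the deterministic error $KC_1/N$ from Corollary \ref{12004}(2) and with the mean contribution $(K+1)|D_r|/N$ from Lemma \ref{23007}. The compatibility of these two opposite constraints on $K$ is precisely what makes the theorem work.
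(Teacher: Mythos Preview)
Your approach is correct and follows the paper's strategy: decompose the deviation into the shift-average part (Lemma~\ref{22005}) and the almost-invariance part (Corollary~\ref{12004}(2) combined with Lemma~\ref{23007}), then choose $K\sim\delta N$. The one difference is in the almost-invariance step: the paper treats the two tails $\mathbb{Y}_{+,r}$ and $\mathbb{Y}_{-,r}$ separately and, using the one-sided form of Corollary~\ref{12004}(2), observes that membership in $\mathbb{Y}_{\pm,r}$ forces the weighted $\log|q|$-sum to deviate from its mean $(K+1)D_r$ by a \emph{fixed} ($\delta$-independent) multiple $C_5'K$ of $K$, so Lemma~\ref{23007} can be applied at that fixed scale, yielding the sharper bound $\exp(-c\delta N)$ for that piece; you instead symmetrize via $|\Sigma_j|$ and apply Lemma~\ref{23007} at scale $\delta$, obtaining $\exp(-c\delta^2 N)$ and then rescaling $\delta\to\delta/C$. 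Both routes give the same final rate $\exp(-\check c\,\delta^2 N)$, since the Lemma~\ref{22005} contribution is already at the $\delta^2 N$ scale.
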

 \begin{proof}
Set
\[\mathbb{Y}_r:=\{x:|\frac{1}{N}\log\|T_N(re(x))\|-\frac{1}{N}\int\log\|T_N(re(x))\|dx |>\delta
\}.\]Then \begin{eqnarray}\mathbb{Y}_r&\subseteq &
\{x:|\frac{1}{K}\sum_{k=1}^K
u_N(re(x+k\omega))-<u_N(re(\cdot))>|>\frac{\delta}{2}
\}\bigcup\{x:u_N(re(x))-\frac{1}{K}\sum_{k=1}^K
u_N(re(x+k\omega))>\frac{\delta}{2}\}\nonumber\\
&\ &\ \ \ \ \ \ \bigcup\{x:u_N(re(x))-\frac{1}{K}\sum_{k=1}^K
u_N(re(x+k\omega))<-\frac{\delta}{2}\}\nonumber\\
&:=&\mathbb{Y}_{0,r}\bigcup
\mathbb{Y}_{+,r}\bigcup\mathbb{Y}_{-,r}.\nonumber
\end{eqnarray}
Set $K=C_4\delta N$. By Lemma \ref{22005},\[\mes
\mathbb{Y}_{0,r}<\exp(-\frac{c}{2}\delta K)=\exp(-C_4\times
c\delta^2 N ).\] We need to estimate $\mes \mathbb{Y}_{\pm,r}$. Due
to part (2) of Corollary \ref{12004}, one has
\[\mathbb{Y}_{+,r}\subseteq
\{x:KC_1-\sum_{k=0}^{K-1}\frac{K-k}{K}\log|q(ze((k-1+N)\omega))q(ze((k+N)\omega))|>\frac{N\delta}{2}\},\]
\[\mathbb{Y}_{-,r}\subseteq
\{x:-KC_1+\sum_{k=0}^{K-1}\frac{K-k}{K}\log|q(ze(k\omega))q(ze((k+1)\omega))|
<-\frac{N\delta}{2}\}.\] Set $C_4<\frac{1}{4C_1}$, then
\[KC_1-\sum_{k=0}^{K-1}\frac{K-k}{K}\log|q(ze((k-1+N)\omega))q(ze((k+N)\omega))|>\frac{N\delta}{2}\]
\[\Rightarrow
\sum_{k=0}^{K-1}\frac{K-k}{K}\log|q(ze((k-1+N)\omega))q(ze((k+N)\omega))|<-\frac{\delta
N}{4}=-\frac{K}{4C_4}.\]
\begin{eqnarray}\label{25050}&\ &\sum_{k=0}^{K-1}\frac{K-k}{K}\log|q(ze((k-1+N)\omega))q(ze((k+N)\omega))|
-\sum_{m=0}^{K-1}2\frac{K-m}{K}\int_{\mathbb{T}}\log|q(re(x))|dx\\
&=&\sum_{k=0}^{K-1}\frac{K-k}{K}\log|q(ze((k-1+N)\omega))q(ze((k+N)\omega))|-(K+1)D_r\nonumber\\
&<&-\frac{K}{4C_4}-(K+1)D_r.\nonumber\end{eqnarray} Recall that
\[ \tilde{D}=\max_{1-\frac{\rho}{2}\leq r\leq
1+\frac{\rho}{2}}|D_r|<+\infty.\]Let $C_4<\frac{1}{8|\tilde{D}|}$ to
make $\frac{1}{4C_4}+D_r=C_{5,r}>\tilde{D}>0$. Note that $C_{5,r}$
is also continue for $r$. Thus
\[C_5=\min_{1-\frac{\rho}{2}\leq r\leq
1+\frac{\rho}{2}}C_{5,r}>0\] Then there exists $C'_5=C'_5(p,q)>0$,
s.t.
\[\frac{K}{4C_4}+(K+1)D_r=C_{5,r}K+D_r\geq C_5K+D_r\geq C'_5 K \] for any K. Thus Lemma \ref{23007} applies for
$\delta=C'_5$,
\[\mes(\mathbb{Y}_{+,r})\leq K\exp(-c_q \times C'_5
K)=C_4\delta N\exp(-c_q \times C'_5 \times C_4\delta N).\]  As
$y\exp(-\xi y)\leq \xi^{-1}$ for any $y, \xi>0$,
\[\mes(\mathbb{Y}_{+,r})\leq C_4\delta N\exp(-\frac{c_q \times C'_5}{2} \times C_4\delta N)\times\exp(-\frac{c_q \times C'_5}{2} \times C_4\delta N)
<\frac{2}{c_q\times C'_5}\exp(-\frac{c_q \times C'_5}{2}\times
C_4\delta N)<\exp(-c_{p,q}\delta N),\]for $N\geq \check{N}$, where
$\check{N}$ depends on $c_{p,q}$ and $C'_5$, i.e.
$\check{N}=\check{N}(p,q,\omega)$.
Similarly,\[\mes(\mathbb{Y}_{-,r})<\exp(-c_{p,q}\delta N).\] So for
$N\geq\check{N}(p,q,\omega)$
\[\mes\mathbb{Y}_r<2\exp(-c_{p,q}\delta N)+\exp(-C_4\times c\delta^2 N )<\exp(-\check{c}\delta^2
N),\] where $\check{c}=\check{c}(p,q,\omega)$.
\end{proof}
\begin{remark}\begin{enumerate}
\item[{\rm{(1)}}]
Recall that
\[\tilde{u}(z)=u_N(z)-\frac{1}{2N}\sum_{n=0}^{N-1}\log
|q(re(x+n\omega))q(re(x+(n+1)\omega))|,\
<\tilde{u}_N(z)=<u_N(z)>-D_r,\] see (\ref{10004}). By Remark
\ref{23006}, Theorem \ref{22006} , for any $N\geq \check{N}$
\[\mes(\{x:|\tilde{u}_N(re(x))-L_{N,r}(E)|>2\delta\})<\exp(-c_q\delta K)+\exp(-\check{c}\delta^2 N)<\exp(-4\tilde{c}\delta^2
N),\]where $L_{N,r}(E)=J_{N,r}-D_r$. Hence for any $N\geq \check{N}$
\begin{equation}\label{25001}\mes(\{x:|\tilde{u}_N(re(x))-L_{N,r}(E)|>\delta\})<\exp(-\tilde{c}\delta^2
N).\end{equation}

\item[{\rm{(2)}}]Once again let us note that the constants $c,\
\tilde{c}$ here do not depend on $\delta$. In particular, one can
choose here $\delta$ depending on N.
\end{enumerate}
\end{remark}
\begin{lemma}
\label{lem:liprad} Let $1>\rho>0$ and suppose $u$ is subharmonic
on~$A_\rho$ such that $\sup_{z\in A_\rho} u(z)\le 1$ and
$\int_{\tor} u(re(x))\,dx\ge0$. Then for any $r_1,r_2$ so that
$1-\frac{\rho}{2}\leq r_1,r_2\leq 1+\frac{\rho}{2}$ one has
\[ |\langle u(r_1 e(\cdot)) \rangle - \langle u(r_2e(\cdot)) \rangle| \le C_\rho\,|r_1-r_2|.\]
\end{lemma}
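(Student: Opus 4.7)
The plan is to decompose $u$ via the Riesz representation and then compute the two resulting circular averages explicitly. First I would apply Lemma~\ref{lem:riesz} with $\Omega = A_\rho$ and nested subregions $\Omega_1 = A_{3\rho/4}$ and $\Omega_2 = A_{9\rho/16}$; note that every circle $\{|z|=r\}$ with $1-\rho/2 \le r \le 1+\rho/2$ lies inside $\Omega_2 \Subset \Omega_1 \Subset \Omega$. The hypothesis $\sup_\Omega u \le 1$ supplies the upper bound, while the subharmonic mean value inequality together with $\int_\tor u(re(x))\,dx \ge 0$ yields $\sup_{\Omega_1} u \ge 0$. Hence (\ref{21002}) and (\ref{21003}) provide a decomposition
\[ u(z) = \int_{\Omega_1} \log|z-\zeta|\,d\mu(\zeta) + h(z), \qquad z \in \Omega_1, \]
with $\mu(\Omega_1) \le C_\rho$ and $\|h\|_{L^\infty(\Omega_2)} \le C_\rho$ (after absorbing $\sup_{\Omega_1} u \in [0,1]$ into the constant).

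Next I would invoke the classical circular-average identity
\[ \int_0^1 \log|re(x) - \zeta|\,dx = \max(\log r,\log|\zeta|), \]
valid for all $r>0$ and $\zeta \in \IC$, and apply Fubini to write
\[ \langle u(re(\cdot))\rangle = \int_{\Omega_1} \max(\log r,\log|\zeta|)\,d\mu(\zeta) + \langle h(re(\cdot))\rangle. \]
For $r_1,r_2 \in [1-\rho/2,\,1+\rho/2]$ the integrand $r \mapsto \max(\log r,\log|\zeta|)$ is Lipschitz in $r$ uniformly in $\zeta$ with constant $(1-\rho/2)^{-1}$, so integrating against $\mu$ contributes at most $C_\rho\,\mu(\Omega_1)\,|r_1-r_2| \le C_\rho^2\,|r_1-r_2|$ to the difference.

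Finally, since $h$ is harmonic in the annulus $\Omega_1$, its circular average has the explicit radial form $\langle h(re(\cdot))\rangle = a + b\log r$ for constants $a,b$ determined by $h$. Evaluating this linear-in-$\log r$ expression at two $\rho$-separated radii inside $\Omega_2$ and combining with $\|h\|_{L^\infty(\Omega_2)} \le C_\rho$ forces $|a|,|b| \le C_\rho$, which gives a Lipschitz bound on this term as well. Adding the two contributions produces the stated inequality. The only mildly delicate point is ensuring that $\sup_{\Omega_1} u \ge 0$ so that (\ref{21002}) yields a constant depending only on $\rho$; this is immediate from the subharmonic mean value inequality applied on a circle in $\Omega_1$ where $\langle u(re(\cdot))\rangle \ge 0$.
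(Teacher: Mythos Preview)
Your argument is correct and complete. The paper itself does not supply a proof here; it simply cites Lemma~4.1 of \cite{GS1}. Your route---Riesz decomposition on nested subannuli, the Jensen-type identity $\int_0^1 \log|re(x)-\zeta|\,dx = \max(\log r,\log|\zeta|)$, the uniform Lipschitz bound on $r\mapsto\max(\log r,\log|\zeta|)$, and the observation that the circular average of a harmonic function on an annulus has the form $a+b\log r$---is exactly the standard proof of this fact and is essentially what the cited reference contains. The only point worth flagging is notational: the hypothesis $\int_\tor u(re(x))\,dx\ge 0$ in the lemma statement carries an unspecified $r$; in context (and in the application via Remark~\ref{25018}) this is $r=1$, and since the unit circle lies in $\Omega_1=A_{3\rho/4}$ your lower bound $\sup_{\Omega_1}u\ge 0$ follows as you say.
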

\begin{proof}
See [GS1] Lemma 4.1.
\end{proof}
\begin{remark}\label{25018}It is easy to see that
this lemma also holds for $u_N(z)$ with $C_\rho=C_\rho(p,q,\omega)$.
Thus\[|\int_0^1 u_N(re(\theta))d\theta-J_{N,1}(E) d\theta|\leq
C_{\rho}|r-1|\] for any $1-\frac{\rho}{2}<r<1+\frac{\rho}{2}$.
\end{remark}
~\\
~\\
\begin{lemma}\label{22010}For any $N\geq \check{N}(p,q,\omega)$\be\frac{1}{N}\log\|T_N(e(x),E)\|\leq J_{N,1} +C_6(\frac{\log
N}{N})^{\frac{1}{2}},\ee where $C_6=C_6(p,q,\omega)$ and $\check{N}
$ is as in Theorem \ref{22006}.\end{lemma}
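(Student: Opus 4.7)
The function $u_N(z):=N^{-1}\log\|T_N(z,E)\|$ is subharmonic on $A_\rho$ and bounded above by $C(p,q)$ (Lemma~\ref{2200a}). I would obtain the pointwise upper bound from the planar sub-mean-value inequality on a small disk around $z_0:=e(x_0)$: for any $r\in(0,\rho/2)$,
\[ u_N(z_0)\le \frac{1}{\pi r^2}\int_{D(z_0,r)}u_N(z)\,dA(z). \]
The right-hand side will then be controlled by combining Theorem~\ref{22006} (large deviation on each circle $|z|=\varrho$ with $|\varrho-1|\le r$) with Remark~\ref{25018} (radial Lipschitz continuity of $J_{N,\varrho}(E)$ in $\varrho$).

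Concretely, I would write $z=\varrho\,e(y)$ in polar coordinates about the origin (so $dA=2\pi\varrho\,d\varrho\,dy$); the disk $D(z_0,r)$ is contained in the thin box $\{\varrho\,e(y):|\varrho-1|\le r,\ |y-x_0|\le r\}$. For each fixed $\varrho$ with $|\varrho-1|\le r$ and a $\delta>0$ to be chosen, introduce the good and bad sets
\[ G_\varrho:=\{y\in\mathbb{T}:u_N(\varrho\,e(y))\le J_{N,\varrho}(E)+\delta\},\qquad B_\varrho:=\mathbb{T}\setminus G_\varrho. \]
By Theorem~\ref{22006}, $\mes(B_\varrho)\le\exp(-\check{c}\delta^2 N)$; by Remark~\ref{25018}, $|J_{N,\varrho}(E)-J_{N,1}(E)|\le C_\rho r$, so on $G_\varrho$ one has $u_N(\varrho\,e(y))\le J_{N,1}(E)+\delta+C_\rho r$, while on $B_\varrho$ only the uniform bound $u_N\le C(p,q)$ is available. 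Splitting the 2D integral accordingly and applying Fubini (using the uniform bound $|J_{N,\varrho}(E)|\le C''(p,q)$ to handle the sign of $J_{N,1}(E)+\delta+C_\rho r$) yields
\[ \int_{D(z_0,r)}u_N(z)\,dA\le \bigl(J_{N,1}(E)+\delta+C_\rho r\bigr)\pi r^2+C'(p,q)\,r\,\exp(-\check{c}\delta^2 N), \]
and dividing by $\pi r^2$ gives
\[ u_N(z_0)\le J_{N,1}(E)+\delta+C_\rho r+\frac{C'(p,q)}{r}\,\exp(-\check{c}\delta^2 N). \]

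To finish, I would set $r:=N^{-1}$ and $\delta:=C_6(\log N/N)^{1/2}$ with $C_6$ large enough that $\check{c}C_6^2\ge 2$. Then $C_\rho r=O(N^{-1})$ and $(C'/r)\exp(-\check{c}\delta^2 N)\le C'N^{1-\check{c}C_6^2}\le C'N^{-1}$ are both $o(\delta)$, so for $N$ beyond a new threshold $\check{N}(p,q,\omega)$ the three error terms combine into $C_6(\log N/N)^{1/2}$, after possibly enlarging $C_6$ by an absolute factor. I expect the main delicate step to be the Fubini bookkeeping reducing the 2D integral on $D(z_0,r)$ to the 1D large deviations on circles $|z|=\varrho$: in particular, one must treat the case $J_{N,1}(E)+\delta+C_\rho r<0$ separately, exploiting $|J_{N,\varrho}(E)|\le C''(p,q)$ and $\mes(G_\varrho)\ge\mes(\mathbb{T})-\exp(-\check{c}\delta^2 N)$ to recover the same bound up to an extra error that is also absorbable into $C_6(\log N/N)^{1/2}$; once that is done cleanly the rest is routine parameter optimization.
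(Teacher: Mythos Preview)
Your proposal is correct and follows essentially the same scheme as the paper: sub-mean-value inequality on a small disk, Fubini to reduce to circles, the large deviation estimate (Theorem~\ref{22006}) on each circle together with the radial Lipschitz bound (Remark~\ref{25018}), and then parameter optimization with $\delta\sim(\log N/N)^{1/2}$. The paper's execution differs only cosmetically: it works in logarithmic coordinates $z=e(x+iy)$ rather than polar, takes the disk radius $t_0\propto\delta$ rather than $r=N^{-1}$, and---most relevant to the sign issue you flag---it subtracts $J_{N,1}$ before averaging and controls the bad-set contribution by Cauchy--Schwarz against the $L^2$ bound of Remark~\ref{13002}(4), which cleanly avoids any case distinction on the sign of $J_{N,1}(E)+\delta+C_\rho r$.
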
\begin{proof} Let
$0<\delta<\frac{\rho}{4}$ be arbitrary. Note that $e(x+iy)=e^{-2\pi
y}e(x)$, $1-\frac{\rho}{4}<1-\frac{\delta}{C'_\rho}\leq e^{-2\pi
y}\leq 1+\frac{\delta}{C'_\rho}<1+\frac{\rho}{4}$, if $|y|\leq
\frac{\delta}{4\pi e C'_\rho}$, where $C'_\rho=\max(1,C_\rho)$ and
$e=\exp(1)$. By Remark \ref{25018} one has \be\label{25019}
|\int_0^1 u_N(re(\theta))d\theta-J_{N,1}(E) d\theta|\leq \delta,\
\mbox{if}\ |y|\leq \frac{\delta}{4\pi e C'_\rho}.\ee Set
\[\mathbb{B}_y:=\{x:|u_N(e(x+iy))-J_{N,1}|>2\delta\}.\]
It follows from (\ref{25019}) that for $|y|\leq \frac{\delta}{4\pi e
C'_\rho}$ holds
\[\mathbb{B}_y\subseteq
\{x:|u_N(e(x+iy))-\int_0^1u_N(e(\theta+iy))d\theta|>\delta\}.\] Due
to Theorem \ref{22006} one obtains $\mes \mathbb{B}_y\leq
\exp(-\check{c}\delta^2 N)$. The function $u_N(e(x+iy))$ is
subharmonic, for $e(x+iy)\in A_\rho$. let $x_0$ be arbitrary and
$y_0=0$. Then $e(x_0)\in A_{\frac{\rho}{4}}$. Due to subharmonicity
one has for any $t_0<\frac{\rho}{4}$
\begin{eqnarray}u_N(e(x_0))-J_{N,1}&\leq& \frac{1}{\pi
t_0^2}\iint_{|(x,y)-(x_0,0)|\leq
t_0}[u_N(e(x+iy))-J_{N,1}]dxdy\nonumber\\
&=&\frac{1}{\pi t_0^2}\int_{|y|\leq t_0}\int_{|x- x_0|\leq
\sqrt{t_0^2-|y|^2}}[u_N(e(x+iy))-J_{N,1}]dxdy.\nonumber\end{eqnarray}
Furthermore
\[\int_{|x- x_0|\leq
\sqrt{t_0^2-|y|^2}}[u_N(e(x+iy))-J_{N,1}]dx=\left (\int_{\{|x-
x_0|\leq \sqrt{t_0^2-|y|^2}\}\bigcap \mathbb{B}_y}+\int_{\{|x-
x_0|\leq \sqrt{t_0^2-|y|^2}\}\setminus \mathbb{B}_y}\right
)[u_N(e(x+iy))-J_{N,1}]dx.\]Note that
\[|u_N(e(x+iy))-J_{N,1}|\leq 2\delta,\ \mbox{if } x\not\in
\mathbb{B}_y \mbox{ and } y<\frac{\delta}{4e\pi C'_\rho}.\]So
\[\left |\int_{\{|x-
x_0|\leq \sqrt{t_0^2-|y|^2}\}\setminus
\mathbb{B}_y}[u_N(e(x+iy))-J_{N,1}]dx\right |\leq
2\delta\times(2\sqrt{t_0^2-|y|^2}).\]Due to Cauchy-Schwartz
inequality
\[\left |\int_{\{|x-
x_0|\leq \sqrt{t_0^2-|y|^2}\}\bigcap
\mathbb{B}_y}[u_N(e(x+iy))-J_{N,1}]dx\right |\leq \left
(\int_0^1|u_N(e(x+iy))-J_{N,1}|^2dx\right )^{\frac{1}{2}}\left (\mes
\mathbb{B}_y\right )^{\frac{1}{2}}\leq
C_7\exp(-\frac{\check{c}}{2}\delta^2 N).\] Set
$t_0=\frac{\delta}{4e\pi C'_\rho}$, then
\begin{eqnarray}u_N(e(x))-J_{N,1}&\leq &\frac{1}{\pi t_0^2}\int_{|y|\leq
t_0}[C_7\exp(-\frac{\check{c}}{2}\delta^2
N)+2\delta\times(2\sqrt{t_0^2-|y|^2})]dy\nonumber\\
&\leq &\frac{1}{\pi t_0^2}\times
C_7\exp(-\frac{\check{c}}{2}\delta^2
N)\times (2t_0)+2\delta\nonumber\\
&=&\frac{8e C_7C'_\rho}{\delta}\exp(-\frac{\check{c}}{2}\delta^2
N)+2\delta\nonumber.\end{eqnarray}
 Set $\delta=(\frac{C_8\log N}{N})^{\frac{1}{2}}$, where
 $C_8>\frac{2}{\check{c}}$. Then
$\exp(-\frac{\check{c}}{2}C_8\log N)<\frac{1}{N}$, and
\[u_N(e(x))\leq J_{N,1} +8e C_7C'_\rho\times(\frac{N}{C_8\log N})^{\frac{1}{2}}\times \frac{1}{N}+2(\frac{C_8\log N}{N})^{\frac{1}{2}}\leq J_{N,1} +C_6(\frac{\log
N}{N})^{\frac{1}{2}}.\]
\end{proof}
\begin{lemma}\label{22020}For any $0\leq x\leq 1$ and any $N\geq \check{N}$ holds
\[\log\|\tilde{M}_N(e(x),E)\|\leq NL_N+C_6(N\log
N)^{\frac{1}{2}}-NF_N(x),\] where
\[F_N(x)=\frac{1}{2N}\sum_{n=0}^{N-1}Q(x+n\omega),\ \ \ \ \
Q(x)=\log|q(e(x))q(e(x+\omega))|-2D.\]\end{lemma}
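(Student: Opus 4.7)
The plan is to combine the identity \eqref{10004} with the pointwise upper bound from Lemma \ref{22010}; everything else is algebraic bookkeeping.

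First I would rewrite \eqref{10004} at $z=e(x)$ as
\[\log\|\tilde{M}_N(e(x),E)\| = \log\|T_N(e(x),E)\| - \frac{1}{2}\sum_{n=0}^{N-1}\log|q(e(x+n\omega))q(e(x+(n+1)\omega))|,\]
so that the left-hand side is split into the ``principal'' part $\log\|T_N\|$ and a $b$-correction term which, once its mean is separated out, should turn exactly into the $N F_N(x)$ appearing in the statement.

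Next I would apply Lemma \ref{22010} to the first term on the right, getting
\[\log\|T_N(e(x),E)\| \le N\,J_{N,1}(E) + C_6\,(N\log N)^{1/2}\qquad\text{for }N\ge\check N,\]
and then invoke the definition of $J_{N,r}$ (just before Lemma \ref{2200a}) at $r=1$, namely $J_{N,1}(E)=L_{N,1}(E)+D=L_N+D$, to rewrite the leading order as $N L_N + N D$.

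Finally I would unpack $F_N(x)$ using the definition $Q(x)=\log|q(e(x))q(e(x+\omega))|-2D$. Summing over $n=0,\dots,N-1$ and dividing by $2N$ gives
\[F_N(x) = \frac{1}{2N}\sum_{n=0}^{N-1}\log|q(e(x+n\omega))q(e(x+(n+1)\omega))| - D,\]
so that the $-\tfrac{1}{2}\sum\log|qq|$ piece in the first display equals $-NF_N(x)-ND$. Substituting this and the bound on $\log\|T_N\|$ back, the two $\pm ND$ contributions cancel and the desired inequality drops out. I do not anticipate any obstacle here: the lemma is a bookkeeping reformulation of Lemma \ref{22010} in which the explicit $b$-dependence of $\tilde{M}_N$ (as opposed to $T_N$) is isolated in the ergodic-type average $F_N(x)$, which will later be handled by the large deviation estimate of Remark \ref{23006}(1).
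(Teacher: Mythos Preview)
Your proposal is correct and follows exactly the same route as the paper: combine \eqref{10004} with Lemma \ref{22010}, replace $J_{N,1}=L_N+D$, and observe that the $b$-sum equals $NF_N(x)+ND$ so the $\pm ND$ cancel. There is nothing to add.
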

\begin{proof}
\begin{eqnarray}\label{20019}
\log\|\tilde{M}_{[1,N]}(e(x),E)\|&=&\log
\|T_{[1,N]}(e(x),E)\|-\frac{1}{2}\sum_{n=0}^{N-1}\log
|q(e(x+n\omega))q(e(x+(n+1)\omega))|
\\
&\leq& NJ_N(E) +C_6(N\log
N)^{\frac{1}{2}}-\frac{1}{2}\sum_{n=0}^{N-1}\log
|q(e(x+n\omega))q(e(x+(n+1)\omega))|\nonumber.
\end{eqnarray}
Recall that
\[J_N(E)=L_N(E)+D.\]
Then due to (\ref{20019}) one has
\[\log\|\tilde{M}_N(e(x),E)\|\leq NL_N+C_6(N\log
N)^{\frac{1}{2}}-NF_N(x).\]
\end{proof}
\begin{remark}\label{20101}Note that Lemma \ref{22020} implies, in
particular that $NL_N+C_6(N\log N)^{\frac{1}{2}}-NF_N(x)\geq 0$ for
any $x$ for large N.\end{remark}

\begin{lemma}\label{22021}For any $0\leq x\leq 1$ and any $k\geq \check{N}$ holds
\[\left
|\log\|\tilde{M}_N(e(x+k\omega),E)\|-\log\|\tilde{M}_N(e(x))\|\right|\leq
2kL_k(E)+2C_6(N\log N)^{\frac{1}{2}}-kF_k(x)-kF_k(x+N\omega).\]
\end{lemma}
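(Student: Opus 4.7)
The strategy is to reduce the difference $\log\|\tilde M_N(e(x+k\omega))\|-\log\|\tilde M_N(e(x))\|$ to the norm of two shorter cocycles (of length $k$) via a composition identity, and then to bound those two short cocycles using the already established pointwise upper bound from Lemma~\ref{22020}.

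\textbf{Step 1: cocycle composition identity.} Writing out $M_{[1,k+N]}(x)$ in two different ways and using the determinant formulas worked out in~(\ref{10001}) to compare normalizations, one gets the two identities
\[
\tilde M_{[1,k+N]}(x)=\tilde M_{[1,N]}(x+k\omega)\,\tilde M_{[1,k]}(x)=\tilde M_{[1,k]}(x+N\omega)\,\tilde M_{[1,N]}(x).
\]
Equating the two right-hand sides and solving yields
\[
\tilde M_{[1,N]}(x+k\omega)=\tilde M_{[1,k]}(x+N\omega)\,\tilde M_{[1,N]}(x)\,\tilde M_{[1,k]}(x)^{-1},
\]
and the analogous identity with $x\leftrightarrow x+k\omega$ expresses $\tilde M_N(x)$ in terms of $\tilde M_N(x+k\omega)$ conjugated by $\tilde M_k$ factors at the same two points.

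\textbf{Step 2: pass to norms using unimodularity.} Since $\tilde M_k$ is unimodular, $\|\tilde M_k^{-1}\|=\|\tilde M_k\|$, so the submultiplicativity of $\|\cdot\|$ applied to both identities gives both
\[
\|\tilde M_N(e(x+k\omega))\|\le\|\tilde M_k(e(x+N\omega))\|\cdot\|\tilde M_N(e(x))\|\cdot\|\tilde M_k(e(x))\|
\]
and the symmetric inequality with $x$ and $x+k\omega$ interchanged. Taking $\log$ and combining, one obtains
\[
\bigl|\log\|\tilde M_N(e(x+k\omega))\|-\log\|\tilde M_N(e(x))\|\bigr|\le\log\|\tilde M_k(e(x))\|+\log\|\tilde M_k(e(x+N\omega))\|.
\]

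\textbf{Step 3: apply Lemma~\ref{22020} to each short cocycle.} Because $k\ge\check N$, Lemma~\ref{22020} (applied with $k$ in place of $N$, once at the point $x$ and once at the point $x+N\omega$) yields
\[
\log\|\tilde M_k(e(x))\|\le kL_k+C_6(k\log k)^{1/2}-kF_k(x),\qquad\log\|\tilde M_k(e(x+N\omega))\|\le kL_k+C_6(k\log k)^{1/2}-kF_k(x+N\omega).
\]
Adding these two bounds and combining with Step~2 gives
\[
\bigl|\log\|\tilde M_N(e(x+k\omega))\|-\log\|\tilde M_N(e(x))\|\bigr|\le 2kL_k+2C_6(k\log k)^{1/2}-kF_k(x)-kF_k(x+N\omega),
\]
which is the stated inequality (the $(N\log N)^{1/2}$ in the statement majorizes $(k\log k)^{1/2}$ whenever $k\le N$, which is the regime in which this lemma will be invoked).

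\textbf{Main obstacle.} There is no deep obstacle here: the proof is essentially a telescoping argument built from Lemma~\ref{22020}. The only delicate point is noticing that to get a two-sided bound on $|\log\|\tilde M_N(x+k\omega)\|-\log\|\tilde M_N(x)\||$ one really must write $\tilde M_N$ at the shifted point as a conjugate (not merely a product) of $\tilde M_N$ at the original point by $\tilde M_k$ factors at the two \emph{endpoints} $x$ and $x+N\omega$ of the block of length $N$ — this is precisely why $F_k$ is evaluated at both $x$ and $x+N\omega$ in the final bound.
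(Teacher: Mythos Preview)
Your proof is correct and follows the same route as the paper: you use the cocycle identity $\tilde M_N(e(x+k\omega))\tilde M_k(e(x))=\tilde M_k(e(x+N\omega))\tilde M_N(e(x))$, invoke $\|A^{-1}\|=\|A\|$ for unimodular $A$ to get the two-sided bound by $\log\|\tilde M_k(e(x))\|+\log\|\tilde M_k(e(x+N\omega))\|$, and then apply Lemma~\ref{22020} at the two endpoints. Your explicit remark that Lemma~\ref{22020} actually yields $2C_6(k\log k)^{1/2}$, which is dominated by $2C_6(N\log N)^{1/2}$ in the relevant regime $k\le N$, is a point the paper's proof passes over silently.
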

\begin{proof}
One has
\[\tilde{M}_N(e(x+k\omega),E)\tilde{M}_k(e(x),E)=\tilde{M}_k(e(x+N\omega),E)\tilde{M}_N(e(x),E).\]
Then
\[\left
|\log\|\tilde{M}_N(e(x+k\omega),E)\|-\log\|\tilde{M}_N(e(x))\|\right|\leq
\log\|\tilde{M}_k(e(x),E)\|+\log\|\tilde{M}_k(x(x+N\omega),E)\|,\]because
$\|A^{-1}\|=\|A\|\geq 1$ if $\det A=1$. Due to Lemma \ref{22020},
\[\log\|\tilde{M}_k(e(x),E)\|+\log\|\tilde{M}_k(e(x+N\omega),E)\|\leq
2kL_k(E)+2C_6(N\log N)^{\frac{1}{2}}-kF_k(x)-kF_k(x+N\omega).\]
\end{proof}
\begin{remark}\label{23023}Due to Lemma \ref{lem12002}
\[u_N(e(x),E)\leq \log C(p,q)\] for any $x\in \mathbb{T}$, any $N$ and any
$E$. Similarly,
\[\left
|\log\|\tilde{M}_N(e(x+k\omega),E)\|-\log\|\tilde{M}_N(e(x))\|\right|\leq
2k(\log C(p,q)-D)-kF_k(x)-kF_k(x+N\omega)\] for any $x\in
\mathbb{T}$, any $N$, any $k$ and any $E$.

\end{remark}

\section{using the avalanche principle}

\begin{prop}
\label{prop:AP} Let $A_1,\ldots,A_n$ be a sequence of  $2\times
2$--matrices whose determinants satisfy
\begin{equation}
\label{eq:detsmall} \max\limits_{1\le j\le n}|\det A_j|\le 1.
\end{equation}
Suppose that \be
&&\min_{1\le j\le n}\|A_j\|\ge\mu>n\mbox{\ \ \ and}\label{large}\\
   &&\max_{1\le j<n}[\log\|A_{j+1}\|+\log\|A_j\|-\log\|A_{j+1}A_{j}\|]<\frac12\log\mu\label{diff}.
\ee Then
\begin{equation}
\Bigl|\log\|A_n\cdot\ldots\cdot A_1\|+\sum_{j=2}^{n-1}
\log\|A_j\|-\sum_{j=1}^{n-1}\log\|A_{j+1}A_{j}\|\Bigr| <
C\frac{n}{\mu} \label{eq:AP}
\end{equation}
with some absolute constant $C$.
\end{prop}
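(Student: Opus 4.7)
The plan is to establish the Avalanche Principle by means of a singular value decomposition of each factor $A_j$ combined with a telescoping argument that tracks how the top singular direction evolves under successive multiplication. Since the $A_j$ are $2\times 2$ matrices, write the SVD in the form $A_j = s_j u_j v_j^* + t_j \tilde u_j \tilde v_j^*$, where $s_j = \|A_j\|\ge\mu$ is the top singular value, $\{u_j,\tilde u_j\}$ and $\{v_j,\tilde v_j\}$ are orthonormal pairs in $\IR^2$, and $t_j = |\det A_j|/s_j \le 1/\mu$ by \eqref{eq:detsmall} and \eqref{large}. Thus each $A_j$ lies within operator norm $1/\mu$ of the rank-one matrix $s_j u_j v_j^*$.

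The first key step is to extract the angular information encoded in hypothesis \eqref{diff}. Setting $\sigma_j := \langle v_{j+1}, u_j\rangle$ for $1\le j\le n-1$ and expanding gives
\[
A_{j+1}A_j \;=\; s_{j+1} s_j \sigma_j\, u_{j+1} v_j^* \;+\; R_j ,
\]
where the residual $R_j$ consists of three rank-one contributions whose operator norms together satisfy $\|R_j\|\le s_{j+1}t_j + t_{j+1}s_j + t_{j+1}t_j \le 2 + \mu^{-2}$. Combining the resulting upper bound $\|A_{j+1}A_j\|\le s_{j+1}s_j|\sigma_j| + O(1)$ with the defect hypothesis $\|A_{j+1}A_j\|\ge s_{j+1}s_j/\sqrt{\mu}$ yields $|\sigma_j|\ge c\mu^{-1/2}$, and in fact the sharper two-sided estimate
\[
\bigl|\log\|A_{j+1}A_j\| - \log s_{j+1} - \log s_j - \log|\sigma_j|\bigr| \;=\; O(1/\mu).
\]

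The second key step is an induction on $n$ showing that the full product admits the rank-one approximation
\[
A_n\cdots A_1 \;=\; \Bigl(\prod_{j=1}^n s_j\Bigr)\Bigl(\prod_{j=1}^{n-1}\sigma_j\Bigr)\, u_n v_1^* \;+\; S_n ,
\]
where the remainder satisfies $\|S_n\|\le (Cn/\mu)\prod_{j=1}^n s_j \cdot \prod_{j=1}^{n-1}|\sigma_j|$. The inductive step multiplies the previous product on the left by $A_{n+1}$ in its SVD form: the pairing of $v_{n+1}$ with $u_n$ reproduces the dominant rank-one structure with the new factor $\sigma_n$, while the three other pairings produce errors which, thanks to $s_j\ge\mu$ and $|\sigma_j|\ge c\mu^{-1/2}$, are suppressed by a factor $1/\mu$ relative to the leading term. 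The main technical obstacle is precisely this bookkeeping: one must verify that passing the accumulated remainder $S_n$ through the non-dominant directions of $A_{n+1}$ does not enlarge the ratio $\|S_{n+1}\|/(\prod s_j\prod|\sigma_j|)$ by more than $C/\mu$ per step, and this is where the hypothesis $\mu>n$ enters, ensuring $n/\mu<1$ so that the relative error stays below $1$ and the induction closes.

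Finally, taking logarithms of the leading rank-one term gives
\[
\log\|A_n\cdots A_1\| \;=\; \sum_{j=1}^n \log s_j + \sum_{j=1}^{n-1}\log|\sigma_j| + O(n/\mu),
\]
and substituting $\log|\sigma_j| = \log\|A_{j+1}A_j\| - \log s_{j+1} - \log s_j + O(1/\mu)$ from the first step collapses the result via the telescoping identity $\sum_{j=1}^n\log s_j - \sum_{j=1}^{n-1}(\log s_{j+1}+\log s_j) = -\sum_{j=2}^{n-1}\log s_j$, yielding exactly \eqref{eq:AP}.
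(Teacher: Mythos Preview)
The paper itself gives no proof of this proposition: its entire argument is the line ``See [GS]'', deferring to Goldstein--Schlag. Your SVD-and-angle approach is in fact the strategy of the original proof in~[GS], so at the level of method there is nothing to contrast.

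There is, however, a genuine gap in your inductive step. You claim the relative error $\|S_n\|\big/\bigl(\prod s_j\prod|\sigma_j|\bigr)$ grows by at most $C/\mu$ per multiplication, but the only bound you offer on $\|A_{n+1}S_n\|$ is the crude $s_{n+1}\|S_n\|$; dividing by the new main-term size $s_{n+1}|\sigma_n|\,|M_n|$ introduces a factor $1/|\sigma_n|$, which by your own estimate can be as large as $C\mu^{1/2}$. The resulting recursion $\epsilon_{n+1}\le \epsilon_n/|\sigma_n|+O(\mu^{-3/2})$ blows up, and the hypothesis $\mu>n$ does not rescue it. Your phrase ``passing $S_n$ through the non-dominant directions of $A_{n+1}$'' misidentifies the difficulty: $S_n$ is hit by the \emph{dominant} singular value of $A_{n+1}$ just as much as the main term is. The repair---and this is what the argument in~[GS] actually does---is to carry the extra structural information that the remainder has range along $\tilde u_n$, equivalently to decompose $P_n=u_n a_n^*+\tilde u_n b_n^*$ and track $\|b_n\|/\|a_n\|$. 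Then $b_{n+1}$ depends only on the \emph{small} singular value $t_{n+1}\le\mu^{-1}$ (the $u_{n+1}$-component of $A_{n+1}(\tilde u_n b_n^*)$ is absorbed into $a_{n+1}$), giving $\|b_{n+1}\|\le t_{n+1}(\|a_n\|+\|b_n\|)$ and hence $\|b_n\|/\|a_n\|=O(\mu^{-3/2})$ uniformly in $n$, which closes the induction. A minor secondary slip: the bound $\|R_j\|\le 2+\mu^{-2}$ is false as written since $s_{j+1}t_j\le s_{j+1}/s_j$ need not be bounded, though the \emph{relative} estimate $\|R_j\|/(s_{j+1}s_j|\sigma_j|)=O(\mu^{-1})$ that you actually need does hold.
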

\begin{proof}See [GS].\end{proof}
\begin{remark}For the rest of the paper, we do not use $e(x+iy)$
with $y\not=0$. For that reason we write $x$ instead of $e(x)$ in
all expressions. What's more, without special statement, $N\geq
\check{N}$ and $N\geq \check{K}$ from now on($\delta$ in $\check{K}$
will be defined in Lemma \ref{22012}).\end{remark}

\begin{lemma}\label{62001}Let $\tilde{c}$ be as in (\ref{25001}).
Let $L_N(E)>100\delta>0$, where $\delta<1$ is a constant not
depending on $N$, and $L_{2N}(E)>\frac{9}{10}L_{N}(E)$.  Let $N'=m
N$, $m\in \mathbb{N}$ and $ m\leq \exp(\frac{\tilde{c}}{4}\delta^2
N) $. Then \[|L_{N'}(E)+L_{N}(E)-2L_{2N}(E)|\leq
\exp(-\tilde{c}'\delta^2 N)+\frac{2}{9m}L_N(E),\]where
$\tilde{c}'=\tilde{c}'(p,q, \omega)$. If
$\exp(\frac{\tilde{c}}{10}\delta^2 N) \leq m\leq
\exp(\frac{\tilde{c}}{4}\delta^2 N) $, we have \be
|L_{N'}(E)+L_{N}(E)-2L_{2N}(E)|\leq \exp(-\hat{c}\delta^2 N),\ee
where $\hat{c}=\hat{c}(p,q, \omega)$.
\end{lemma}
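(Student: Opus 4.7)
The plan is to apply the Avalanche Principle (Proposition~\ref{prop:AP}) to the chain of $m$ unimodular matrices
\[ A_j(x) := \tilde{M}_N(e(x+(j-1)N\omega),E), \qquad j=1,\ldots,m, \]
whose telescoping product is $A_m\cdots A_1 = \tilde{M}_{N'}(e(x),E)$ and whose consecutive pairs are $A_{j+1}A_j = \tilde{M}_{2N}(e(x+(j-1)N\omega),E)$. Hypothesis~\eqref{eq:detsmall} is immediate since each $A_j$ is unimodular; the remaining hypotheses should hold on a ``good'' set $\mathcal{G}\subset\mathbb{T}$ obtained by excluding, for each $j$, the LDT-exceptional sets in~\eqref{25001} applied to $\tilde{u}_N$ and $\tilde{u}_{2N}$. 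A union bound over the $\le 2m$ events gives $\mes(\mathbb{T}\setminus\mathcal{G})\le 2m\exp(-\tilde{c}\delta^2 N)$. Setting $\mu:=\exp(NL_N/2)$, on $\mathcal{G}$ we have $\log\|A_j\|\ge N(L_N-\delta)\ge\tfrac12 NL_N = \log\mu$, and $\mu>m$ follows from $NL_N/2>50\delta N\gg \tfrac{\tilde{c}}{4}\delta^2 N \ge \log m$, giving~\eqref{large}. For~\eqref{diff}, on $\mathcal{G}$,
\[ \log\|A_j\|+\log\|A_{j+1}\|-\log\|A_{j+1}A_j\| \le 2N(L_N-L_{2N})+4\delta N < \tfrac{NL_N}{5}+\tfrac{NL_N}{25}<\tfrac{NL_N}{4}=\tfrac12\log\mu, \]
using $L_{2N}>\tfrac{9}{10}L_N$ and $\delta<L_N/100$. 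Proposition~\ref{prop:AP} then yields, for $x\in\mathcal{G}$,
\[ \Phi(x) := \Bigl|\log\|\tilde{M}_{N'}(e(x),E)\|+\sum_{j=2}^{m-1}\log\|A_j(x)\|-\sum_{j=1}^{m-1}\log\|A_{j+1}(x)A_j(x)\|\Bigr|\le \tfrac{Cm}{\mu}. \]

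Next, I integrate $\Phi$ over $\mathbb{T}$. On $\mathcal{G}$ the pointwise bound $Cm/\mu\le C\exp(-NL_N/4)$ is harmless. On $\mathbb{T}\setminus\mathcal{G}$, each of the $\le 2m+1$ contributions of the form $\int_{\mathbb{T}\setminus\mathcal{G}}\log\|\tilde{M}_k(e(\cdot+\tau))\|$ is controlled by Cauchy--Schwarz combined with the $L^2$ bound of Remark~\ref{13002}(4), giving $\le k\sqrt{\tilde{C}}\,\mes(\mathbb{T}\setminus\mathcal{G})^{1/2}$. Using the constraint $m\le \exp(\tilde{c}\delta^2 N/4)$, after dividing the total by $N'=mN$ one obtains an error of order $\exp(-3\tilde{c}\delta^2 N/8)$. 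Collecting both contributions,
\[ \Bigl|L_{N'}(E)+\tfrac{m-2}{m}L_N(E)-\tfrac{2(m-1)}{m}L_{2N}(E)\Bigr|\le \exp(-\tilde{c}'\delta^2 N). \]

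To conclude, use the algebraic identity
\[ L_{N'}+L_N-2L_{2N} = \Bigl(L_{N'}+\tfrac{m-2}{m}L_N-\tfrac{2(m-1)}{m}L_{2N}\Bigr) + \tfrac{2}{m}(L_N-L_{2N}). \]
Subadditivity of $\log\|M_N\|$ gives $L_{2N}\le L_N$, so the hypothesis $L_{2N}>\tfrac{9}{10}L_N$ yields $0\le L_N-L_{2N}<\tfrac{L_N}{9}$, and the first claim follows. For the second statement, when $m\ge \exp(\tilde{c}\delta^2 N/10)$ the residual $\tfrac{2L_N}{9m}$ is itself exponentially small in $N$ (since $L_N\le C(p,q)$ by Remark~\ref{13002}(1)), and can be absorbed into $\exp(-\hat{c}\delta^2 N)$ for any $\hat{c}<\tilde{c}/10$.

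\emph{The principal obstacle} is the treatment of the bad set during integration: $\log\|\tilde{M}_N\|$ admits no uniform pointwise upper bound (because $q$ can vanish), so the $L^2$ control of Remark~\ref{13002}(4) is essential, and the quantitative balancing $\sqrt{m}\cdot\mes(\mathbb{T}\setminus\mathcal{G})^{1/2}\ll 1$ works only thanks to the restriction $m\le\exp(\tilde{c}\delta^2 N/4)$, which is precisely why this upper bound on $m$ appears in the hypothesis.
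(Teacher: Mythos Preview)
Your proposal is correct and follows essentially the same route as the paper: define the good set via the LDT estimate~\eqref{25001} for $\tilde u_N$ and $\tilde u_{2N}$ at the $m$ shifted base points, verify the avalanche hypotheses with $\mu=\exp(\tfrac12 NL_N)$, integrate, and control the bad-set contribution by Cauchy--Schwarz using the uniform $L^2$ bound of Remark~\ref{13002}(4) (this is exactly the paper's inequality~\eqref{11111}); the algebraic rearrangement and the absorption of $\tfrac{2}{9m}L_N$ when $m\ge\exp(\tfrac{\tilde c}{10}\delta^2 N)$ are likewise identical. Your closing remark pinpointing the role of the upper bound $m\le\exp(\tfrac{\tilde c}{4}\delta^2 N)$ in balancing $\sqrt{m}$ against $\mes(\mathbb T\setminus\mathcal G)^{1/2}$ is precisely the mechanism behind the paper's bound $\mes(\mathbb T\setminus\mathbb G_1)<\exp(-\tfrac{2\tilde c}{3}\delta^2 N)$.
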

\begin{proof}
By (\ref{25001}), we have for $0\leq j\leq m-1$
\[|\tilde{u}_{N}(x+jN\omega,E)-L_{N}(E)|<\delta\]
\[|\tilde{u}_{2N}(x+jN\omega,E)-L_{2N}(E)|<\delta\]
for $x\in\mathbb{G}_1$, with
\[\mes(\mathbb{T}\backslash \mathbb{G}_1)\leq 2m \times\exp(-\tilde{c}\delta^2 N)<\exp(-\frac{2\tilde{c}}{3}\delta^2 N).\]
Thus when $x\in\mathbb{G}_1$,
\[\|\tilde{M}_{N}(x+jN\omega,E)\|>\exp(N(L_{N}(E)-\delta))>\exp(\frac{99}{100}NL_{N}(E)),\]
and
\begin{eqnarray}
&&\left |\log\|\tilde{M}_{N}(x+jN\omega,E)\|+\log\|\tilde{M}_{N}(x+(j+1)N\omega,E)\|-\log\|\tilde{M}_{N}(x+jN\omega,E)\tilde{M}_{N}(x+(j+1)N\omega,E)\|\right |\\
&\ &\ \ \ \ \ \ \ \ <4N\delta+2N|L_{N}(E)-L_{2N}(E)|
<\frac{6}{25}NL_{N}(E),\nonumber\end{eqnarray}Since $0\leq
L_N(E)-L_{2N}(E)<\frac{1}{10}L_N(E).$  One has \[
\tilde{M}_{N'}(x,E)=\prod_{j=m}^1
\tilde{M}_N(x+(j-1)N\omega,E).\]The avalanche principle applies for
$\mu=\exp(\frac{1}{2}NL_N(E))$. Integrating over $\mathbb{G}_1$ one
obtains
\begin{equation}\label{60201}
|\int_{\mathbb{G}_1}\tilde{u}_{N'}(x,E)dx+\int_{\mathbb{G}_1}\sum_{j=2}^{m-1}\tilde{u}_{N}(x+(j-1)N\omega,E)dx-\int_{\mathbb{G}_1}\sum_{j=1}^{m-1}
\tilde{u}_{2N}(x+(j-1)N\omega,E)dx|\leq
C\frac{m}{N'}\exp(-\frac{1}{2}NL_N(E)),\end{equation}where
$N'=m\times N$.  We want to replace here the integration over
$\mathbb{G}_1$ by integration over $\mathbb{T}$. Recall that due to
(4) in Remark \ref{13002}
\[\int_{\mathbb{T}}\tilde{u}_n^2(E) dx\leq \tilde{C}(p,q)\] for any $n$ and any $E$. Hence,
by Cauchy-Schwartz inequality
\be\label{11111}|\int_{\mathbb{B}}\tilde{u}_K(E)dx|\leq
\tilde{C}(p,q)^{\frac{1}{2}}(\mes \mathbb{B})^{\frac{1}{2}}\ee for
any $K$, any $E$ and any $\mathbb{B}\subseteq \mathbb{T}$. Hence
\[|\int_{\mathbb{T}\backslash\mathbb{G}_1}\tilde{u}_K(E)dx|\leq
\tilde{C}(p,q)^{\frac{1}{2}}\exp(-\frac{\tilde{c}}{3}\delta^2N)\]
for any $K$ and any $E$. Thus
\begin{equation}\label{60202}|\int_{\mathbb{T}\backslash\mathbb{G}_1}\tilde{u}_{N'}(x,E)dx+\frac{1}{m}\int_{\mathbb{T}\backslash\mathbb{G}_1}\sum_{j=2}^{m-1}\tilde{u}_{N}(x+(j-1)N\omega,E)dx-\frac{2}{m}\int_{\mathbb{T}\backslash\mathbb{G}_1}\sum_{j=1}^{m-1}
\tilde{u}_{2N}(x+(j-1)N\omega,E)dx|\leq
4\tilde{C}(p,q)^{\frac{1}{2}}\exp(-\frac{\tilde{c}}{3}\delta^2N).
\end{equation}
Combining (\ref{60201}) with (\ref{60202}), one has
\begin{equation}
|L_{N'}(E)+\frac{m-2}{m}L_N(E)-\frac{2(m-1)}{m}L_{2N}(E)| \leq
4\tilde{C}(p,q)^{\frac{1}{2}}\exp(-\frac{\tilde{c}}{3}\delta^2N)+C\frac{m}{N'}\exp(-\frac{1}{2}NL_N(E))\leq
\exp(-\tilde{c}'\delta^2N)\nonumber.\end{equation}Thus
\begin{eqnarray} |L_{N'}(E)+L_N(E)-2L_{2N}(E)|&\leq &
\exp(-\tilde{c}'\delta^2N)+\frac{2}{m}|L_N(E)-L_{2N}(E)|\\
&<& \exp(-\tilde{c}'\delta^2N)+\frac{2}{m}\times
\frac{1}{10}L_N(E)\nonumber\\
&\leq
&\exp(-\tilde{c}'\delta^2N)+\frac{1}{45m}C''(p,q),\nonumber\end{eqnarray}where
$C''(p,q)$ is the same as (4) in Remark \ref{13002}.
 If
$\exp(\frac{\tilde{c}}{10}\delta^2 N)\leq m$, then
\begin{equation}
|L_{N'}(E)+L_N(E)-2L_{2N}(E)|\leq
\exp(-\hat{c}\delta^2N).\end{equation}
\end{proof}

Now,we can prove
\begin{lemma}\label{62002}Let $\tilde{c}$ be as in (\ref{25001}), $\hat{c}$ be as in Lemma \ref{62001}.
Assume that $L_{N_0}(E)>100\delta>0$ and
$\exp(-\hat{c}\delta^2N_0)\leq \frac{\delta}{12}$, where $\delta<1$
is a constant not depending on $N_0$, and
$L_{2N_0}(E)>\frac{9}{10}L_{N_0}(E)$. There exists
$\tilde{N}_0=\tilde{N}_0(p,q,\delta,N_0)\leq
(\exp(\frac{\tilde{c}}{8}\delta^2 N_0)+1)N_0$ such that for any
$N\geq \tilde{N}_0$ holds \[
|L_N(E)+L_{N_0}(E)-2L_{2N_0}(E)|<\exp(-\bar{c}'\delta^2 N_0),\]where
$\bar{c}'=\bar{c}'(p,q,\omega)$. Furthermore,
 \be |L(E)+L_{N_0}(E)-2L_{2N_0}(E)|< \exp(-\bar{c}\delta^2
 N_0),\ee where $\bar{c}=\bar{c}(p,q,\omega)$.
\end{lemma}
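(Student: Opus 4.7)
The plan is an iterative application of Lemma \ref{62001}: first obtain a foothold at the scale $\tilde N_0 := m_0 N_0$ for a suitable $m_0$ slightly larger than $\exp(\tilde c \delta^2 N_0/10)$, then propagate it to all larger scales, and cover the non-multiples of the current base by a rounding argument that compares $L_N(E)$ with $L_{mN_0}(E)$ for the nearest multiple $mN_0 \le N$.

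First, I would set $A := 2L_{2N_0}(E) - L_{N_0}(E)$ and choose $m_0 \in [\exp(\tilde c \delta^2 N_0/10), \exp(\tilde c \delta^2 N_0/8)]$, $\tilde N_0 := m_0 N_0$. The second conclusion of Lemma \ref{62001}, applied with $m = m_0$ and with $m = 2m_0$ (both still in the admissible window), yields
\[
|L_{\tilde N_0}(E) - A| \le \exp(-\hat c \delta^2 N_0), \qquad |L_{2\tilde N_0}(E) - A| \le \exp(-\hat c \delta^2 N_0).
\]
Combined with $A > \tfrac{4}{5} L_{N_0}(E) > 80\delta$ and the assumption $\exp(-\hat c \delta^2 N_0) \le \delta/12$, this forces $L_{\tilde N_0}(E) > 79\delta$ and $L_{2\tilde N_0}(E) > \tfrac{9}{10} L_{\tilde N_0}(E)$, so the hypotheses of Lemma \ref{62001} hold again at the new base $\tilde N_0$ with a slightly smaller parameter $\delta' \sim \delta/2$.

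Next, I would cover all $N \ge \tilde N_0$ in two sweeps. For any multiple $N = mN_0$ with $m$ in the admissible window of Lemma \ref{62001} at base $N_0$, the second part of that lemma gives $|L_N(E) - A| \le \exp(-\hat c \delta^2 N_0)$. For $N$ not a multiple of $N_0$, write $N = mN_0 + r$ with $m = \lfloor N/N_0 \rfloor$ and $r \in [0, N_0)$, and exploit the factorization $M_{[1,N]} = M_{[mN_0+1,N]} M_{[1,mN_0]}$ together with the uniform bound on $\|B(z)\|$ from Lemma \ref{lem12002}, the determinant identity $|\det M_{[mN_0+1,N]}| = |b(x + mN_0 \omega)|/|b(x+N\omega)|$, and the integrability of $\log|q|$ from Remark \ref{13002}; this produces $|NL_N(E) - mN_0 L_{mN_0}(E)| \le 2r\,C(p,q)$, hence $|L_N(E) - L_{mN_0}(E)| = O(N_0/N) = O(1/m_0) \le C\exp(-\tilde c \delta^2 N_0/10)$ whenever $N \ge \tilde N_0$. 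This settles $N$ up to $N_0 \exp(\tilde c \delta^2 N_0/4)$. For $N$ beyond that, one repeats the same procedure with base $\tilde N_0$ (and doubles further if necessary), each iteration extending the valid range super-exponentially while preserving an error bound of $\exp(-\bar c' \delta^2 N_0)$ by choosing the constants generously.

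Finally, sending $N \to \infty$ and absorbing a mild loss into the constant produces $|L(E) - A| \le \exp(-\bar c \delta^2 N_0)$. The main technical obstacle is the rounding step: one must check that both $\log \|M_{[mN_0+1,N]}\|$ and $\log \|M_{[mN_0+1,N]}^{-1}\|$ integrate to $O(r)$ over $\tor$ (not $O(r \log r)$), which follows from Lemma \ref{lem12002}, the determinant formula, and the $L^1$-integrability of $\log|q|$. Once this is in place, the rest is bookkeeping on exponents, together with a routine verification that the hypotheses of Lemma \ref{62001} are preserved at each rescaling.
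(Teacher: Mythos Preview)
Your approach is correct and follows the same overall architecture as the paper: iterate Lemma~\ref{62001} along a super-exponentially growing sequence of bases, telescope to reach $L(E)$, and bridge to arbitrary $N\ge \tilde N_0$ by rounding to the nearest multiple of the current base. The verification that the hypotheses of Lemma~\ref{62001} persist at each new base with the halved parameter $\delta'=\delta/2$ is exactly what the paper does.

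The one substantive difference is in the rounding step. The paper compares $\log\|\tilde M_N\|$ with $\log\|\tilde M_{\tilde m N'_{i-1}}\|$ \emph{pointwise} via Remark~\ref{23023}, which produces an $F_k$-term; it then removes a bad set $\hat{\mathbb B}$ where $F_k$ is large using the large-deviation bound for $F_k$, and controls the integral over $\hat{\mathbb B}$ by Cauchy--Schwarz (\ref{11111}). Your route is cleaner: from $M_{[1,N]}=M_{[mN_0+1,N]}M_{[1,mN_0]}$ and $\|A^{-1}\|=\|A\|/|\det A|$ one gets
\[
-\log\|M_{[mN_0+1,N]}^{-1}\|\;\le\;\log\|M_{[1,N]}\|-\log\|M_{[1,mN_0]}\|\;\le\;\log\|M_{[mN_0+1,N]}\|,
\]
and integrating both bounds (using shift invariance and $\int\log|\det M_{[mN_0+1,N]}|\,dx=0$ from the determinant identity) gives $|NL_N-mN_0L_{mN_0}|\le rL_r\le rC''(p,q)$ directly, with no bad-set excision. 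This is a genuine simplification over the paper's argument; conversely, the paper's pointwise inequality is reusable elsewhere (e.g.\ Lemma~\ref{22021}), which is presumably why it was set up that way.
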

\begin{proof} We first prove the second part.
By lemma \ref{62001} for $N'_1=mN_0$,
$\exp(\frac{\tilde{c}}{8}\delta^2 N_0)\leq m<
\exp(\frac{\tilde{c}}{8}\delta^2 N_0)+1$, one has
\begin{equation}\label{54001}|L_{N'_1}(E)+L_{N_0}(E)-2L_{2N_0}(E)|<
\exp(-\hat{c}\delta^2 N_0).\end{equation} and
\[|L_{2N'_1}(E)+L_{N_0}(E)-2L_{2N_0}(E)|<
\exp(-\hat{c}\delta^2 N_0).\]In particular
\[|L_{N'_1}(E)-L_{2N'_1}(E)|< 2\exp(-\hat{c}\delta^2 N_0).\]
Since $0\leq L_{N_0}(E)-L_{2N_0}(E)<\frac{1}{10}L_{N_0}(E)$, one
obtains using (\ref{54001})
\[L_{N'_1}(E)>L_{N_0}(E)-2(L_{N_0}(E)-L_{2N_0}(E))-\exp(-\hat{c}\delta^2 N_0)>\frac{4}{5}L_{N_0}(E)-\exp(-\hat{c}\delta^2 N_0)>79\delta,\]
and
\[|L_{N'_1}(E)-L_{2N'_1}(E)|\leq 2\exp(-\hat{c}\delta^2 N_0)<2\delta<\frac{2}{79}L_{N'_1}(E)<\frac{1}{10}L_{N'_1}(E).\]
Set $\delta'=\frac{1}{2}\delta$, then $L_{N'_1}(E)>100 \delta'$, and
Lemma \ref{62001} applies for $N'_2=m_1N'_1$,
$\exp(\frac{\tilde{c}}{8}\delta'^2 N'_1)\leq m_1<
\exp(\frac{\tilde{c}}{8}\delta'^2 N'_1)+1$,
\[|L_{N'_2}(E)+L_{N'_1}(E)-2L_{2N'_1}(E)|\leq
\exp(-\hat{c}\delta'^2 N'_1).\] Also
\[L_{N'_2}(E)>L_{N'_1}(E)-2|L_{N'_1}(E)-L_{2N'_1}(E)|-\exp(-\hat{c}\delta'^2 N_1)>\frac{4}{5}L_{N_0}(E)-6\exp(-\hat{c}\delta^2
N_0)>79\delta>100\delta',\]
\[|L_{2N'_2}(E)+L_{N'_1}(E)-2L_{2N'_1}(E)|\leq
\exp(-\hat{c}\delta'^2 N'_1),\]
\[|L_{N'_2}(E)-L_{2N'_2}(E)|<2\exp(-\hat{c}\delta'^2 N'_1).\]
Since $N'_1>8N_0$ one has
\[\exp(-\hat{c}\delta'^2N'_1)=\exp(-\hat{c}\frac{\delta^2}{4}N'_1)<(\exp(-\hat{c}\delta^2
N_0))^2<(\frac{\delta}{12})^2.\] That implies, in particular
\[|L_{N'_2}(E)-L_{2N'_2}(E)|<2\exp(-\hat{c}\delta'^2
N'_1)<2\delta<\frac{1}{10}L_{N'_2}(E).\] Then Lemma \ref{62001}
applies for $N'_3=m_2N'_2$, $\exp(\frac{\tilde{c}}{8}\delta'^2
N'_2)\leq m_2<\exp(\frac{\tilde{c}}{8}\delta'^2 N'_2)+1$. E.T.C..
Obtain $N'_{i+1}=m_iN'_i$, $\exp(\frac{\tilde{c}}{8}\delta'^2
N'_i)\leq m_i<\exp(\frac{\tilde{c}}{8}\delta'^2 N'_i)+1$ with the
same $\delta'$. Then
\[|L_{N'_{i+1}}(E)+L_{N'_i}(E)-2L_{2N'_i}(E)|\leq
\exp(-\hat{c}\delta'^2 N'_i),\]
\[L_{N'_{i+1}}(E)>L_{N'_i}(E)-2|L_{N'_i}(E)-L_{2N'_i}(E)|-\exp(-\hat{c}\delta'^2 N'_i)>\frac{4}{5}L_{N_0}(E)-\sum_{j=1}^{i}(\frac{1}{2})^j\delta\geq 79\delta>50\delta=100\delta',\]
\[|L_{2N'_{i+1}}(E)+L_{N'_i}(E)-2L_{2N'_i}(E)|\leq
\exp(-\hat{c}\delta'^2 N'_i),\]
\[|L_{N'_{i+1}}(E)-L_{2N'_{i+1}}(E)|<2\exp(-\hat{c}\delta'^2 N'_i),\]
\[4\exp(-\hat{c}\delta'^2 N'_i)<(\frac{1}{2})^{i+1}\delta,\]and
\[|L_{N'_{i+1}}(E)-L_{2N'_{i+1}}(E)|<2\delta<\frac{1}{10}L_{N'_{i+1}}(E).\]
What's more,
\begin{eqnarray}\label{1002}
|L_{N'_{i+1}}(E)-L_{N'_{i}}(E)|&\leq&
|L_{N'_{i+1}}(E)+L_{N'_i}(E)-2L_{2N'_i}(E)|+2|L_{N'_i}(E)-L_{2N'_i}(E)|\\
&<&\exp(-\hat{c}\delta'^2 N'_i)+4\exp(-\hat{c}\delta'^2
N'_{i-1})<5\exp(-\hat{c}\delta'^2 N'_{i-1}),\ \ i\geq 2\nonumber
\end{eqnarray}
\[|L_{N'_2}(E)-L_{N'_1}(E)|<5\exp(-\hat{c}\delta^2 N_0).\] Since
$L_{N'_i}\to L(E)$ with $i\to \infty$ one has
\begin{eqnarray}\label{60214}\\
|L(E)+L_{N_0}(E)-2L_{2N_0}(E)|&=&|\sum_{i\geq
1}(L_{N'_{i+1}}(E)-L_{N'_{i}}(E))+L_{N'_1}(E)+L_{N_0}(E)-2L_{2N_0}(E)|\nonumber\\
&\leq &\sum_{s\geq 1}|L_{N'_{s+1}}(E)-L_{N'_{s}}(E)|+|L_{N'_1}(E)+L_{N_0}(E)-2L_{2N_0}(E)|\nonumber\\
&=&\sum_{s\geq
2}|L_{N'_{s+1}}(E)-L_{N'_{s}}(E)|+|L_{N'_2}(E)-L_{N'_1}(E)|+|L_{N'_1}(E)+L_{N_0}(E)-2L_{2N_0}(E)|\nonumber\\
&<&\sum_{s\geq 2}5\exp(-\hat{c}\delta'^2
N'_{i-1})+5\exp(-\hat{c}\delta^2
N_0)+\exp(-\hat{c}\delta^2 N_0)\nonumber\\
&<&\exp(-\bar{c}\delta^2 N_0).\nonumber\end{eqnarray} That finishes
we second part. We prove now the first part. Note that just as in
(\ref{60214}) one obtains
\be\label{60222}|L_{N'_i}(E)+L_{N_0}(E)-2L_{N_0}(E)|\leq
\exp(-\bar{c}\delta^2 N_0)\ee for $i\geq 1$. Let $N\geq
\tilde{N}_0:=N'_1$ be arbitrary. Find $i$ such that $N'_i\leq
N<N'_{i+1}$. Recall that
\[N'_i=m_{i-1}N'_{i-1},\ \exp(\frac{\tilde{c}}{8}\delta'^2
N'_{i-1})\leq m_{i-1}<\exp(\frac{\tilde{c}}{8}\delta'^2
N'_{i-1})+1,\]
\[N'_{i+1}=m_{i}N'_{i},\ \exp(\frac{\tilde{c}}{8}\delta'^2
N'_{i})\leq m_{i}<\exp(\frac{\tilde{c}}{8}\delta'^2 N'_{i})+1,\]here
$N'_0:=N_0$ for convenience. Consider two cases:\begin{enumerate}
\item[{\rm{(1)}}]$N\leq \exp(\frac{\tilde{c}}{4}\delta'^2
N'_{i-1})N'_{i-1}$. In this case $\frac{N'_{i-1}}{N}\leq
\frac{N'_{i-1}}{N'_i}\leq \exp(-\frac{\tilde{c}}{8}\delta'^2
N'_{i-1})$. Then find $\tilde{m}$, $m_{i-1}\leq \tilde{m}\leq
\exp(\frac{\tilde{c}}{4}\delta'^2 N'_{i-1})$, such that
\[\tilde{m}N'_{i-1}\leq N<(\tilde{m}+1)N'_{i-1}.\] Then by Lemma
\ref{62001}
\be\label{60216}|L_{\tilde{m}N'_{i-1}}(E)+L_{N'_{i-1}}(E)-2L_{2N'_{i-1}}(E)|<\exp(-\hat{c}\delta'^2
N'_{i-1}).\ee Note that \[N-\tilde{m}N'_{i-1}\leq N'_{i-1},\] and by
Remark \ref{23023}
\begin{eqnarray}&&\left |\log\|\tilde{M}_N(x,E)\|-\log\|\tilde{M}_{\tilde{m}N'_{i-1}}(x,E)\|\right
|\leq
\log\|\tilde{M}_{N-\tilde{m}N'_{i-1}}(x+\tilde{m}N'_{i-1}\omega,E)\|\nonumber\\
&&\ \ \ \ \ \ \leq N'_{i-1}(\log
C(p,q)-D)-(N-\tilde{m}N'_{i-1})F_{N-\tilde{m}N'_{i-1}}(x+\tilde{m}N'_{i-1}\omega).\nonumber
\end{eqnarray}
We know that
\[\mes(\{x:|kF_k(x)-k<F_k(x)>|>k\delta\})<\exp(-c\delta k)\]for any
$k$. Since $<F_k>=0$, then
\begin{eqnarray}\label{60018}\left |\log\|\tilde{M}_N(x,E)\|-\log\|\tilde{M}_{\tilde{m}N'_{i-1}}(x,E)\|\right
|&\leq& N'_{i-1}(\log
C(p,q)-D)+N'_{i-1}\\&<&\hat{C}'(p,q)N'_{i-1},\nonumber\end{eqnarray}
if $x\not\in\hat{\mathbb{B}}$,
$\mes{\hat{\mathbb{B}}}<\exp(-cN'_{i-1})$. Integrating (\ref{60018})
over $\mathbb{T}\backslash \hat{\mathbb{B}}$ and using
(\ref{11111}), one obtains
\begin{eqnarray}|L_N(E)-\frac{\tilde{m}N'_{i-1}}{N}L_{\tilde{m}N'_{i-1}}(E)|&<&
\hat{C}'(p,q)\frac{N'_{i-1}}{N}+2\tilde{C}(p,q)^{\frac{1}{2}}\times\exp(-\frac{c}{2}N'_{i-1})\\
&\leq&\hat{C}'(p,q)\exp(-\frac{\tilde{c}}{8}\delta'^2
N'_{i-1})+2\tilde{C}(p,q)^{\frac{1}{2}}\times\exp(-\frac{c}{2}N'_{i-1})\nonumber\\
&\leq& \exp(-\tilde{c}_1\delta'^2 N'_{i-1})\nonumber.\end{eqnarray}
Note that
\[1-\frac{\tilde{m}N'_{i-1}}{N}=\frac{N-\tilde{m}N'_{i-1}}{N}\leq
\frac{N'_{i-1}}{N}\leq \exp(-\frac{\tilde{c}}{8}\delta'^2
N'_{i-1}).\] Thus
\be\label{60219}|L_N(E)-L_{\tilde{m}N'_{i-1}}(E)|\leq
\exp(-\tilde{c}_2\delta'^2 N'_{i-1}).\ee Combining (\ref{60216})
with (\ref{60219}), one obtains
\be\label{60223}|L_N(E)+L_{N'_{i-1}}(E)-2L_{2N'_{i-1}}(E)|<\exp(-\tilde{c}_3\delta^2
N'_{i-1}).\ee
\item[{\rm{(2)}}]$N>\exp(\frac{\tilde{c}}{4}\delta'^2N'_{i-1})N'_{i-1}$.
Find $\tilde{m}'$ such that \[\tilde{m}'N'_i\leq
N<(\tilde{m}'+1)N'_i.\] Thus
\[\tilde{m}'>\frac{\exp(\frac{\tilde{c}}{4}\delta'^2
N'_{i-1})}{\exp(\frac{\tilde{c}}{8}\delta'^2
N'_{i-1})+1}<\frac{\exp(\frac{\tilde{c}}{4}\delta'^2
N'_{i-1})}{2\exp(\frac{\tilde{c}}{8}\delta'^2
N'_{i-1})}=\frac{1}{2}\exp(\frac{\tilde{c}}{8}\delta'^2
N'_{i-1}).\]Since $N<N'_{i+1}$, then $\tilde{m}'<m_i$. It implies
due to Lemma \ref{62002}
\be\label{60220}|L_{\tilde{m}'N'_{i}}(E)+L_{N'_{i}}(E)-2L_{2N'_{i}}(E)|<\exp(-\tilde{c}'\delta'^2
N'_{i})+\frac{2}{9\tilde{m}'}L_{N'_i}(E).\ee As in Case (1), one has
\be\label{60221}|L_N(E)-L_{\tilde{m}N'_{i-1}}(E)|<\frac{\hat{C}''(p,q)}{\tilde{m}'}\leq
\exp(-\tilde{c}_4\delta'^2 N'_{i-1}).\ee Combining (\ref{60220})
with (\ref{60221}), one obtains
\be\label{60224}|L_N(E)+L_{N'_{i}}(E)-2L_{2N'_{i}}(E)|<\exp(-\tilde{c}_5\delta'^2
N'_{i-1}).\ee
\end{enumerate}
Combining (\ref{60222}) with (\ref{60223}) or (\ref{60224}), as in
(\ref{60214}), one obtains
\[|L_N(E)+L_{N_0}(E)-2L_{N_0}(E)|<\exp(-\bar{c}'\delta^2N_0),\]
where $\bar{c}'=\bar{c}'(p,q,\omega)$.
\end{proof}

\begin{lemma}\label{62003}Assume $L(E_0)>0$. There exists
$\check{C}(p,q,E_0)$ such that with
$\rho_0'(E_0,N)=\frac{L(E_0)}{200}\exp(-\check{C}(p,q,E_0)N)$, one
has
\[|L_{N}(E_0)-L_{N}(E)|<\frac{L(E_0)}{100},\]for any
$|E-E_0|<\rho_0'(E_0,N)$ and any $N$.\end{lemma}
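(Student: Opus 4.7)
My plan is to compare $L_N(E)$ with $L_N(E_0)$ via a pointwise telescoping estimate on $\log\|\tilde{M}_N\|$, combined with the $L^2$ bound of Remark \ref{13002}(4) and the large-deviation theorem of Remark \ref{23006}(1). Since $B(x,E) - B(x,E_0)$ has a single nonzero entry $E_0-E$ in the $(1,1)$ position and $\|B(z,E)\| \leq C(p,q)$ by Lemma \ref{lem12002}, a standard telescoping identity gives $\|T_N(x,E) - T_N(x,E_0)\| \leq N\,C(p,q)^{N-1}|E-E_0|$. Since $\det M_N = b(x)/b(x+N\omega)$ is independent of $E$, formula (\ref{10003}) shows $\tilde{M}_N(x,E) = c(x)\,T_N(x,E)$ with $|c(x)| = |\det T_N(x)|^{-1/2}$ independent of $E$. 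Unimodularity gives $\|\tilde{M}_N\| \geq 1$, so the elementary estimate $|\log a - \log b| \leq |a-b|/\min(a,b)$ yields the key pointwise bound
\[
\bigl|\log\|\tilde{M}_N(x,E)\| - \log\|\tilde{M}_N(x,E_0)\|\bigr| \leq \frac{N\,C(p,q)^{N-1}|E-E_0|}{|\det T_N(x)|^{1/2}}.
\]

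I would then split the torus using the good set $G := \{x : |\det T_N(x)|^{1/2} \geq e^{-M_0 N}\}$, where the constant $M_0 = M_0(p,q,E_0) > -D$ is to be fixed. Applying Remark \ref{23006}(1) with $K = N$ and deviation $M_0+D$ (after an $\omega$-shift that identifies $\tfrac{1}{2}\log|\det T_N|$ with the sum treated there) yields $\mes(\TT \setminus G) \leq \exp(-c_q(M_0+D)N)$. On $G$ the pointwise bound gives
\[
\frac{1}{N}\int_G \bigl|\log\|\tilde{M}_N(E)\| - \log\|\tilde{M}_N(E_0)\|\bigr|\,dx \leq C(p,q)^{N-1}e^{M_0 N}|E-E_0|,
\]
which is $< L(E_0)/200$ whenever $|E-E_0| < (L(E_0)/200)\exp(-(M_0+\log C(p,q))N)$. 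On $\TT\setminus G$, using $\log\|\tilde{M}_N\|\geq 0$, the Cauchy--Schwarz inequality, and Remark \ref{13002}(4),
\[
\frac{1}{N}\int_{\TT\setminus G}\bigl|\log\|\tilde{M}_N(E)\| - \log\|\tilde{M}_N(E_0)\|\bigr|\,dx \leq 2\tilde{C}(p,q)^{1/2}\exp\bigl(-c_q(M_0+D)N/2\bigr).
\]
Choosing $M_0$ so large that $c_q(M_0+D)/2 > \log(400\,\tilde{C}^{1/2}/L(E_0))$ forces this to be $< L(E_0)/200$ uniformly in $N \geq 1$; such a choice depends only on $p, q$, and $L(E_0)$.

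Setting $\check{C}(p,q,E_0) := M_0 + \log C(p,q)$ and $\rho_0'(E_0,N) := (L(E_0)/200)\exp(-\check{C}N)$, the two estimates add to yield the conclusion. The main obstacle is the competing role played by $M_0$: increasing $M_0$ shrinks the bad set, making the Cauchy--Schwarz contribution uniformly small in $N$, but simultaneously inflates the Lipschitz factor $e^{M_0 N}$ appearing on $G$ and hence forces a correspondingly smaller $\rho_0'$. The key observation is that both contributions admit the same exponential scaling $e^{CN}$, so a single $N$-independent constant $\check{C} = \check{C}(p,q,E_0)$ suffices for every $N \geq 1$, matching the stated dependence.
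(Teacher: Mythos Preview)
Your argument is correct and coincides with the paper's proof in every essential respect: the same telescoping bound on $\|T_N(x,E)-T_N(x,E_0)\|$, the same conversion to $\tilde{M}_N$ via $|\det T_N|^{-1/2}$, the same good/bad splitting using the large-deviation bound of Remark~\ref{23006}(1), and the same Cauchy--Schwarz estimate with Remark~\ref{13002}(4) on the bad set. The only cosmetic difference is that the paper fixes the deviation parameter to $\frac{800\tilde{C}^{1/2}}{L(E_0)c_q}$ and then invokes $y\exp(-\xi y)\le \xi^{-1}$ to extract the factor $L(E_0)/(400N)$, whereas you pick $M_0$ large enough once and for all; both yield the same $E_0$-dependent $\check{C}$.
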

\begin{proof}
Note that
\begin{eqnarray}
\left |\|T_{N}(x,E_0)\|-\|T_{N}(x,E)\|\right |
&\leq&\|T_{N}(x,E_0)-T_{N}(x,E)\|\\&\leq&\sum_{j=0}^{N-1}(\|B(x+(N-1)\omega,E_0)\times\cdots\times
B(x+(j+1)\omega,E_0)\|\times \nonumber\\&\ &\ \ \ \
\|B(x+j\omega,E_0)-
B(x+j\omega,E)\|\times\|B(x+(j-1)\omega,E)\times\cdots\times
B(x,E)\|)
\nonumber\\
&\leq&NC(p,q)^{N-1}|E_0-E|\nonumber,
\end{eqnarray}
see (1) in Lemma \ref{lem12002}. By (\ref{16001}), one has
\begin{eqnarray}\left
|\|\tilde{M}_{N}(x,E_0)\|-\|\tilde{M}_{N}(x,E)\|\right
|&=&\prod_{n=0}^{N-1}\frac{1}{|q(x+n\omega)q(x+(n+1)\omega)|^{\frac{1}{2}}}\big
|\|T_{N}(x,E_0)\|-\|T_{N}(x,E)\|\big |\\
&\leq
&\frac{NC(p,q)^{N-1}|E_0-E|}{\prod_{n=0}^{N-1}|q(x+n\omega)q(x+(n+1)\omega)|^{\frac{1}{2}}}\nonumber\end{eqnarray}
Assume for instance that $\|\tilde{M}_{N}(x,E_0)\|\geq
\|\tilde{M}_{N}(x,E)\|$. Then
\begin{eqnarray}\label{60002}\left |\log\|\tilde{M}_{N}(x,E_0)\|-\log \|\tilde{M}_{N}(x,E)\| \right |&=&
\log\frac{\|\tilde{M}_{N}(x,E_0)\|}{\|\tilde{M}_{N}(x,E)\|}=\log
(1+\frac{\|\tilde{M}_{N}(x,E_0)\|-
\|\tilde{M}_{N}(x,E)\|}{\|\tilde{M}_{N}(x,E)\|})\\
&\leq &\frac{\|\tilde{M}_{N}(x,E_0)\|-
\|\tilde{M}_{N}(x,E)\|}{\|\tilde{M}_{N}(x,E)\|}\leq\|\tilde{M}_{N}(x,E_0)\|-
\|\tilde{M}_{N}(x,E)\|\nonumber\\
&\leq&\frac{NC(p,q)^{N-1}|E_0-E|}{\prod_{n=0}^{N-1}|q(x+n\omega)q(x+(n+1)\omega)|^{\frac{1}{2}}}.\nonumber\end{eqnarray}
Due to  (1) in Remark \ref{23006} for any $\delta$ and any $K$
\[\mes(\{x:|\sum_{k=1}^K\frac{1}{2}\log
|q(x+k\omega)q(x+(k+1)\omega)|-K<\frac{1}{2}\log
|q(x)q(x+\omega)|>|>\delta K\})<\exp(-c_q\delta K).
\]Thus
\begin{eqnarray}\left |\sum_{k=0}^{N-1}\frac{1}{2}\log
|q(x+k\omega)q(x+(k+1)\omega)|\right |&<& N|<\frac{1}{2}\log
|q(x)q(x+\omega)|>|+\frac{800\tilde{C}(p,q)^{\frac{1}{2}}}{L(E_0)c_q}N\\&=&|D|N+\frac{800\tilde{C}(p,q)^{\frac{1}{2}}}{L(E_0)c_q}N=\hat{C}(q,p,E_0)N,\nonumber\end{eqnarray}
if $x\not\in\mathbb{B}_1,\ \mes{\mathbb{B}_1}<\exp(-c_q\times
\frac{800\tilde{C}(p,q)^{\frac{1}{2}}}{L(E_0)c_q}N)=\exp(-\frac{800\tilde{C}(p,q)^{\frac{1}{2}}}{L(E_0)}N),$
where constant $\tilde{C}(p,q)$ comes from (4) in Remark
\ref{13002}. The same estimate holds if
$\|\tilde{M}_{N}(x,E_0)\|\leq \|\tilde{M}_{N}(x,E)\|$. So
\begin{equation}\left |\log\|\tilde{M}_{N}(x,E_0)\|-\log
\|\tilde{M}_{N}(x,E)\| \right |\leq
NC(p,q)^{N-1}|E_0-E|\exp(\hat{C}(p,q,E_0)N)\leq
\exp(\check{C}(p,q,E_0)N)|E_0-E|,\end{equation}if
$x\not\in\mathbb{B}_1,\
\mes{\mathbb{B}_1}<\exp(-\frac{800\tilde{C}(p,q)^{\frac{1}{2}}}{L(E_0)}N).$
Set $\rho_0'=\frac{L(E_0)}{200}\exp(-\check{C}(p,q,E_0)N)$. Then, if
$|E-E_0|\leq \rho_0'$,
\[\left
|\log\|\tilde{M}_{N}(x,E_0)\|-\log \|\tilde{M}_{N}(x,E)\| \right
|<\frac{L(E_0)}{200},\]if $x\not\in\mathbb{B}_1,\
\mes{\mathbb{B}_1}<\exp(-\frac{800\tilde{C}(p,q)^{\frac{1}{2}}}{L(E_0)}N),$
 \be\label{60012}\left |\int_{\mathbb{T}\backslash
\mathbb{B}_1}\log\|\tilde{M}_{N}(x,E_0)\|-\int_{\mathbb{T}\backslash
\mathbb{B}_1}\log \|\tilde{M}_{N}(x,E)\| \right
|<\frac{L(E_0)}{200}.\ee  Due to (\ref{11111}),
\[|\int_{\mathbb{B}_1}\tilde{u}_{N}dx|\leq
\tilde{C}(p,q)^{\frac{1}{2}}\exp(-\frac{400\tilde{C}(p,q)^{\frac{1}{2}}}{L(E_0)}N)\]
for $E $ or $E_0$. As $y\exp(-\xi y)\leq \xi^{-1}$ for any $y,
\xi>0$. Thus
\be\label{60013}|\int_{\mathbb{B}_1}\tilde{u}_{N}dx|\leq
\frac{L(E_0)}{400N}\leq \frac{L(E_0)}{400}\ee for $E $ or $E_0$.
Combining (\ref{60012}) with (\ref{60013}), one has
\[|L_{N}(E_0)-L_{N}(E)|<\frac{L(E_0)}{200N}+2\frac{L(E_0)}{400}\leq\frac{L(E_0)}{100}.\]
\end{proof}

\begin{lemma}\label{62006}Assume $L(E_0)>0$. There exists $\rho'_0=\rho'_0(p,q,E_0,\omega)>0$ and $\tilde{N}_0=\tilde{N}_0(p,q,E_0,\omega)<+\infty$ such
that for any $N\geq\tilde{N}_0$ and any $|E-E_0|<\rho'_{0}$
 \[|L_{N}(E)-L(E)|<\frac{1}{20}L(E),\ \
\frac{11}{10}L(E_0)>L(E)>\frac{9}{10}L(E_0).\]
\end{lemma}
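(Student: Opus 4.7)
The strategy is to bootstrap from two facts: Lemma \ref{62002} gives an exponentially accurate approximation of $L(E)$ and $L_N(E)$ in terms of the single-scale quantities $L_{N_0}(E)$, $L_{2N_0}(E)$, provided $L_{N_0}(E)>100\delta$ and $L_{2N_0}(E)>\tfrac{9}{10}L_{N_0}(E)$; Lemma \ref{62003} gives finite-scale continuity $|L_N(E)-L_N(E_0)|<L(E_0)/100$ once $|E-E_0|<\rho'_0(E_0,N)$. The plan is to freeze a good scale $N_0$ at $E_0$, transfer the hypotheses of Lemma \ref{62002} from $E_0$ to $E$ via Lemma \ref{62003}, and then read off the conclusions.

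First I would fix $N_0$ large. Since $L_N(E_0)\to L(E_0)$ from above (by subadditivity $L_N\ge L$), one can choose $N_0$ so that $L(E_0)\le L_{N_0}(E_0),L_{2N_0}(E_0)\le (1+\eta)L(E_0)$ for some tiny $\eta$ (say $\eta=1/1000$). Set $\delta:=L(E_0)/300$. Then $L_{N_0}(E_0)\ge L(E_0)>100\delta$ and $L_{2N_0}(E_0)/L_{N_0}(E_0)\ge 1/(1+\eta)>9/10$, so the hypotheses of Lemma \ref{62002} hold at $E_0$; choose $N_0$ even larger so that also $\exp(-\hat c\,\delta^2 N_0)\le \delta/12$ and $\exp(-\bar c\,\delta^2 N_0)\le L(E_0)/1000$.

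Next I would apply Lemma \ref{62003} at both scales $N_0$ and $2N_0$, and set
\[
\rho'_0 := \min\bigl(\rho'_0(E_0,N_0),\;\rho'_0(E_0,2N_0)\bigr).
\]
For $|E-E_0|<\rho'_0$ this yields $|L_{N_0}(E)-L_{N_0}(E_0)|<L(E_0)/100$ and likewise at $2N_0$. Combined with the bounds on $L_{N_0}(E_0),L_{2N_0}(E_0)$, this forces $L_{N_0}(E)>\tfrac{99}{100}L(E_0)>100\delta$ and $L_{2N_0}(E)/L_{N_0}(E)>9/10$ (with a small margin, using $\eta$ small). Hence the hypotheses of Lemma \ref{62002} hold at $E$ as well, and with the \emph{same} $N_0$ and $\delta$, so there is a common $\tilde N_0=\tilde N_0(p,q,E_0,\omega)$ beyond which Lemma \ref{62002} gives, at both $E_0$ and $E$,
\[
|L_N(E)-(2L_{2N_0}(E)-L_{N_0}(E))|<\exp(-\bar c'\delta^2 N_0),\qquad |L(E)-(2L_{2N_0}(E)-L_{N_0}(E))|<\exp(-\bar c\,\delta^2 N_0),
\]
and similarly with $E$ replaced by $E_0$.

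Finally I would combine the three estimates (Lemma \ref{62002} at $E$, continuity Lemma \ref{62003}, Lemma \ref{62002} at $E_0$) by a triangle-inequality argument:
\[
|L(E)-L(E_0)|\le 2\exp(-\bar c\,\delta^2 N_0)+3\cdot L(E_0)/100 < L(E_0)/10,
\]
which yields $\tfrac{9}{10}L(E_0)<L(E)<\tfrac{11}{10}L(E_0)$ after choosing $N_0$ so the exponential error is negligible. The same triangle inequality applied to the first and second displays above gives $|L_N(E)-L(E)|\le 2\exp(-\bar c\,\delta^2 N_0)$ for $N\ge\tilde N_0$, which is $<L(E)/20$ once $N_0$ is large enough relative to $L(E_0)$. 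The main bookkeeping obstacle is to arrange the constants so that (i) $\eta$ is small enough to preserve the $9/10$ ratio after the $L(E_0)/100$-continuity shift, and (ii) the exponential error $\exp(-\bar c\,\delta^2 N_0)$ dominates over the constant $L(E_0)/100$ wherever needed; both are handled by taking $N_0$ sufficiently large, at the price that $\rho'_0$ and $\tilde N_0$ ultimately depend on $E_0$ through $L(E_0)$ and through $\check C(p,q,E_0)$ of Lemma \ref{62003}, as claimed.
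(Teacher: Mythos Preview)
Your proposal is correct and follows essentially the same approach as the paper: fix a good scale $N_0$ at $E_0$ using convergence $L_N(E_0)\to L(E_0)$, transfer the hypotheses of Lemma~\ref{62002} to nearby $E$ via Lemma~\ref{62003} applied at scales $N_0$ and $2N_0$, then read off both conclusions by triangle inequalities. The only cosmetic differences are your choice $\delta=L(E_0)/300$ versus the paper's $\delta=\min(L(E_0)/200,\tfrac12)$, and that you invoke Lemma~\ref{62002} at $E_0$ as well (the paper instead uses the raw convergence $|L_{N_0}(E_0)-L(E_0)|<L(E_0)/100$ directly); also, the bound for $|L_N(E)-L(E)|$ should strictly be $\exp(-\bar c'\delta^2N_0)+\exp(-\bar c\,\delta^2N_0)$ rather than $2\exp(-\bar c\,\delta^2N_0)$, but this is immaterial.
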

\begin{proof}
One has $\lim_{n\to\infty}L(E_0)=L(E_0)$. Therefore there exists
$N_0=N_0(p,q,\omega,E_0)$ s.t.
$|L_n(E_0)-L(E_0)|<\frac{L(E_0)}{100}$ for $n\geq
N_0(p,q,\omega,E_0)$. It implies that
$L_{N_0}(E_0)-L_{2N_0}(E_0)<\frac{L(E_0)}{100}$, as $L(E_0)\leq
L_{2N_0}(E_0)\leq L_{N_0}(E_0)$. Set
$\delta=\min(\frac{1}{200}L(E_0),\frac{1}{2})$. We can assume also
that $\exp(-\hat{c}\delta^2N_0)\leq \frac{\delta}{12}$,
$\exp(-\bar{c}\delta^2
 N_0))<\frac{1}{50}L(E_0)$ and $\exp(-\bar{c}'\delta^2
 N_0))<\frac{1}{50}L(E_0)$, where $\hat{c}$ is  as in Lemma \ref{62001},
$\bar{c}$ and $\bar{c}'$ are as in Lemma \ref{62002}. Using Lemma
\ref{62003} applied to $N_0$ and $2N_0$. One has for
$|E-E_0|<\rho'_0(E_0,2N_0)$ \be\label{60033}L_{N_0}(E)\geq L(E_0)-
|L_{N_0}(E)-L_{N_0}(E_0)|-|L_{N_0}(E_0)-L(E_0)|>
L(E_0)-\frac{L(E_0)}{100}-\frac{L(E_0)}{100}=\frac{49}{50}L(E_0),\ee
and
\begin{eqnarray}\label{60034}|L_{N_0}(E)-L_{2N_0}(E)|&\leq&
|L_{N_0}(E)-L_{N_0}(E_0)|+|L_{N_0}(E_0)-L_{2N_0}(E_0)|+|L_{2N_0}(E_0)-L_{2N_0}(E)|\\
&<&\frac{L(E_0)}{100}+\frac{L(E_0)}{100}+\frac{L(E_0)}{100}=\frac{3}{100}L(E_0)<\frac{1}{10}L_{N_0}(E).\nonumber\end{eqnarray}
 Thus Lemma \ref{62002}
applies for $L_{N_0}(E)$, $\delta$, $N_0$ and $E$, then there exists
$\tilde{N}_0=\tilde{N}_0(p,q,\delta,N_0)\leq
(\exp(\frac{\tilde{c}}{8}\delta^2 N_0)+1)N_0$ such that for any
$N\geq \tilde{N}_0$ holds \[
|L_N(E)+L_{N_0}(E)-2L_{2N_0}(E)|<\exp(-\bar{c}'\delta^2 N_0),\] and
 \be\label{60035} |L(E)+L_{N_0}(E)-2L_{2N_0}(E)|< \exp(-\bar{c}\delta^2
 N_0),\ee where
$\bar{c}'=\bar{c}'(p,q,\omega)$ and $\bar{c}=\bar{c}(p,q,\omega)$
are as in Lemma \ref{62003}. These imply
\begin{eqnarray}\label{60036}|L(E)-L_{N}(E)|&\leq&\exp(-\bar{c}'\delta^2 N_0)+\exp(-\bar{c}\delta^2
 N_0)<
 \frac{1}{50}L(E_0)+\frac{1}{50}L(E_0)=\frac{1}{25}L(E_0).
 \end{eqnarray}
Combining (\ref{60033}), (\ref{60034}) with (\ref{60035}), one
obtains
\begin{eqnarray}|L(E_0)-L(E)|&\leq &
|L(E)+L_{\tilde{N}_0}(E)-2L_{2\tilde{N}_0}(E)|+|L(E_0)-L_{\tilde{N}_0}(E)|+2|L_{\tilde{N}_0}(E)-L_{2\tilde{N}_0}(E)|\\&<&
\frac{1}{50}L(E_0)+\frac{1}{50}L(E_0)+2\frac{3}{100}L(E_0)=\frac{1}{10}L(E_0).\nonumber\end{eqnarray}It
implies
\be\label{60038}\frac{11}{10}L(E_0)>L(E)>\frac{9}{10}L(E_0),\ee and
\[|L(E)-L_{N}(E)|<\frac{1}{25}L(E_0)<\frac{1}{25}\times
\frac{10}{9}L(E)=\frac{2}{45}L(E)<\frac{1}{20}L(E).\]
\end{proof}

\begin{lemma}\label{22011}Assume $L(E_0)>0$.  Let $\rho'_0$ be as in Lemma
\ref{62006}. Fix any $0<\kappa <1$. Let $K=\frac{\kappa}{20} N$.
Then for there exists $N_1$ s.t. $N\geq N_1$ and  for any
$E\in(E_0-\rho'_0,E_0+\rho'_0)$ holds
\[\mes\{x:|\tilde{u}_N(x,E)-\frac{1}{K}\sum_{k=1}^K\tilde{u}_N(x+k\omega,E)|>\kappa
L(E)\}<\exp(-c''\kappa L(E)N),\]where  constant $c''$ depends only
on $p,q,\omega $, but does not depend on $E_0$ or $E$ or $\kappa$.
The number $N_1$ depends on $p,q,\omega$,$E_0$ and
$\kappa$.\end{lemma}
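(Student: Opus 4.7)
The plan is to combine the pointwise upper bound from Lemma \ref{22020} with the large deviation estimates of Remark \ref{23006} and equation (\ref{25001}), following the Goldstein--Schlag/Bourgain--Jitomirskaya scheme. For the upper side of the absolute value, Lemma \ref{22020} gives $\tilde u_N(x,E)\le L_N(E)-F_N(x)+C_6\sqrt{\log N/N}$, so
\[
\tilde u_N(x,E)-\frac{1}{K}\sum_{k=1}^K \tilde u_N(x+k\omega,E)\le \Big[L_N(E)-\frac{1}{K}\sum_{k=1}^K \tilde u_N(x+k\omega,E)\Big]-F_N(x)+C_6\sqrt{\log N/N}.
\]
The bracket is controlled by Remark \ref{23006}(3): for $K\ge \check K$, the set where it exceeds $\delta$ in absolute value has measure $<\exp(-\hat c\delta K)$. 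The term $F_N(x)$ is controlled by Remark \ref{23006}(1) applied at scale $N$: $\mes\{|F_N(x)|>\delta\}<\exp(-c_q\delta N)$. Taking $\delta=\kappa L(E)/3$, fixing $N_1$ so that $C_6\sqrt{\log N/N}<\kappa L(E)/3$, and invoking Lemma \ref{62006} to ensure $L_N(E)$ stays within $L(E)/20$ of $L(E)$ uniformly on $(E_0-\rho'_0,E_0+\rho'_0)$, one concludes $\tilde u_N(x,E)-\frac{1}{K}\sum \tilde u_N(x+k\omega,E)<\kappa L(E)$ off a set whose measure is controlled by the two LDT bounds.

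For the reverse inequality $\frac{1}{K}\sum \tilde u_N(x+k\omega,E)-\tilde u_N(x,E)>\kappa L(E)$, the pointwise upper bound of Lemma \ref{22020} is useless and one additionally needs a lower bound on $\tilde u_N(x,E)$. For this, invoke the LDT (\ref{25001}) for $\tilde u_N$ itself: $\mes\{|\tilde u_N(x,E)-L_N(E)|>\delta\}<\exp(-\tilde c\delta^2 N)$. Taking again $\delta=\kappa L(E)/3$ and combining with the same orbit-average LDT from Remark \ref{23006}(3) yields $\tilde u_N(x,E)>L_N(E)-\kappa L(E)/3$ and $\frac{1}{K}\sum \tilde u_N(x+k\omega,E)<L_N(E)+\kappa L(E)/3$ simultaneously on the good set, hence the symmetric bound.

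Unioning the bad sets from both directions produces a set of measure $<\exp(-c''\kappa L(E) N)$ with $c''=c''(p,q,\omega)$, and $N_1=N_1(p,q,\omega,E_0,\kappa)$ is forced by the requirement $C_6\sqrt{\log N/N}<\kappa L(E)/3$ together with the threshold $\check K=\check K(p,q,\kappa L(E))$ needed to pass from the two-term LDT of Remark \ref{23006}(3) to the single-term bound $\exp(-\hat c\delta K)$. The main technical obstacle is to track the exponent carefully: the $F_N$-LDT contributes $c_q\kappa L(E) N$ (exactly the desired form), while the orbit-average LDT on scale $K=\kappa N/20$ contributes $\hat c(\kappa L(E)/3)(\kappa N/20)$ and the pointwise $\tilde u_N$-LDT contributes $\tilde c(\kappa L(E)/3)^2 N$. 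The choice $K=\kappa N/20$ is precisely what balances these contributions so that the final constant $c''$ can be taken independent of $E_0,E,\kappa$; verifying that balance (and that Lemma \ref{62006} indeed makes $L_N(E)$ and $L(E)$ interchangeable throughout the argument) is the principal bookkeeping challenge.
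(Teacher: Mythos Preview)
Your approach does not deliver the exponent the lemma demands. The statement requires the bad set to have measure $<\exp(-c''\kappa L(E)N)$ with $c''$ \emph{independent of $E_0$, $E$, and $\kappa$}. Look at the three probabilistic inputs you list at the end: the $F_N$--LDT contributes $c_q\,\kappa L(E)N/3$ (fine), but the orbit--average LDT at scale $K=\kappa N/20$ with deviation $\delta=\kappa L(E)/3$ contributes $\hat c\,\delta K\sim \kappa^2 L(E)N$, and the pointwise LDT (\ref{25001}) contributes $\tilde c\,\delta^2 N\sim \kappa^2 L(E)^2 N$. The last two exponents carry extra factors of $\kappa$ and of $L(E)$, so the resulting $c''$ would depend on both $\kappa$ and $E_0$. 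No choice of $K$ ``balances'' this away: the quadratic $\delta^2$ in (\ref{25001}) is intrinsic, and you invoke it at scale $N$, where $K$ plays no role.

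The paper sidesteps the need for a pointwise lower bound on $\tilde u_N$ altogether. It uses Lemma~\ref{22021} (the almost--invariance estimate), which bounds the \emph{absolute value}
\[
\bigl|\log\|\tilde M_N(x+k\omega)\|-\log\|\tilde M_N(x)\|\bigr|\le 2kL_k(E)+2C_6(k\log k)^{1/2}-kF_k(x)-kF_k(x+N\omega),
\]
so both directions are handled simultaneously. Summing over $k=1,\dots,K$ and dividing by $KN$, the deterministic part $\frac{1}{KN}\sum 2kL_k+\dots$ is made $<\tfrac{1}{2}\kappa L(E)$ purely by the choice $K=\kappa N/20$ together with $L_k(E)\le \tfrac{21}{20}L(E)$ from Lemma~\ref{62006}; no LDT is spent here. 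The only probabilistic ingredient is then the $F_k$--LDT: if $\sum_{k\le K} kF_k(x)<-\tfrac{1}{4}\kappa L(E)KN$ then some single $kF_k(x)<-\tfrac{1}{4}\kappa L(E)N$, and $\mes\{|kF_k|>\tfrac{1}{4}\kappa L(E)N\}<\exp(-c\cdot\tfrac{1}{4}\kappa L(E)N)$, linear in $\kappa L(E)$ as required. That linearity is exactly what your route through (\ref{25001}) cannot produce.
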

\begin{proof}
Choose $\bar{N}_0$ s.t. \be\label{21118}
\bar{N}_0>\max(\frac{800(\log C(p,q)-D)}{\kappa^2
L(E_0)},\frac{20C_6}{\kappa^2
L(E_0)},20,\frac{1}{\kappa},\tilde{N}_0)\ee where $(\log C(p,q)-D)$
is as in Remark \ref{23023}, $C_6$ is as in Lemma \ref{22010},
$\tilde{N}_0$ is as in Lemma \ref{62006}. Thus
 \be\label{21119} L_{K}(E)<(1+\frac{1}{20})L(E),\ee for any $K\geq \bar{N}_0$ and any $E\in(E_0-\rho'_0,E_0+\rho'_0)$. Finally,
 we assume that \be\label{21120}\log K<K^{\frac{1}{6}},\ee
if $K\geq \bar{N}_0$. Using Lemma \ref{22021} and Remark \ref{23023}
one obtains
\begin{eqnarray}\label{21124}|\tilde{u}_N(x,E)-\frac{1}{K}\sum_{k=1}^K\tilde{u}_N(x+k\omega,E)|&\leq&
\frac{1}{KN}\big [ \sum_{k=1}^{\bar{N}_0}2k(\log
C(p,q)-D)+\sum_{k=\bar{N}_0+1}^K 2 kL_k(E)+\sum_{k=\bar{N}_0+1}^K 2C_6(k\log k)^{\frac{1}{2}}\big ]\nonumber\\
&&\ \ \ \ -\frac{1}{KN}\sum_{k=1}^K(kF_k(x)+kF_k(x+N\omega))\nonumber\\
&=&(\Rmnum{1})+(\Rmnum{2})\nonumber.
\end{eqnarray}
Take here $N\geq \bar{N}_0^3$, $K=\frac{\kappa}{20}N$. Note that
$N_0\kappa>1$, so $K=\frac{\kappa}{20}N>\frac{\bar{N}_0^2}{20}\geq
\bar{N}_0$. Thus, (\ref{21119}) and (\ref{21120}) holds.
 One
has\begin{eqnarray}\label{21122}(\Rmnum{1})&<&\frac{\bar{N}_0^2(\log C(p,q)-D)}{KN}+4\frac{21}{20}L(E)\frac{K}{N}++\frac{C_6K^{\frac{1}{2}}(\log K)^{\frac{1}{2}}}{N}\\
&<& \frac{\kappa L(E_0)}{20}+\frac{\kappa}{4}L(E)+\frac{\kappa
 L(E_0)}{20}<\frac{1}{2}\kappa L(E),\nonumber\end{eqnarray} see
(\ref{21118}), (\ref{21119}), (\ref{21120}) and Lemma \ref{62006}.
If
\[\sum_{k=1}^KkF_k(x,E)<-\frac{KN}{4}\kappa L(E),\]then
\[\exists k,\ s.t. \ kF_k(x,E)<-\frac{N}{4}\kappa L(E).\]
We know that
\[\mes(\{x:|kF_k(x)-k<F_k(x)>|>k\delta\})<\exp(-c\delta k),\]
Since $<F_k>=0$, then
\begin{eqnarray}\mes(\{x:kF_k(x)<-\frac{N}{4}\kappa
L(E)\})&\leq&\mes(\{x:|kF_k(x)|>\frac{N}{4}\kappa L(E)
<\exp(-c\frac{N\kappa L(E)}{4k}k)=\exp(-c_2\kappa N
L(E)).\nonumber\end{eqnarray} So
\[\mes\{x:\sum_{k=1}^K kF_k(x)<-\frac{KN}{4}L(E)\}\leq K\exp(-c_2\kappa N
L(E))=K\exp(-c_2 20KL(E)).\] Sine $y\exp(-\xi y)\leq \xi^{-1}$ for
any $\xi$, $y>0$, one has \begin{eqnarray}\label{21123}
\mes\{x:\sum_{k=1}^K kF_k(x)<-\frac{KN}{4}L(E)\}&\leq &K\exp(-c_2
20KL(E))=K\exp(-c_2 10KL(E))\exp(-c_2 10KL(E))\\
&\leq & \frac{1}{10c_2L(E)}\exp(-c_2
10KL(E))=\frac{1}{10c_2L(E)}\exp(-\frac{c_2}{2}\kappa L(E)N)\nonumber\\
&\leq& \exp(-c'_2\kappa L(E) N),\nonumber\end{eqnarray} if $N$ is
large enough depending on $L(E_0)$ and $\kappa$ (see Lemma
\ref{62006} and (\ref{21118})). Combining (\ref{21122}) with
(\ref{21123}) one has
\[\mes\{x:|\tilde{u}_N(x,E)-\frac{1}{K}\sum_{k=1}^K\tilde{u}_N(x+j\omega,E)|>\kappa
L(E)\}\leq 2\exp(-c'_2\kappa N L(E))<\exp(-c''\kappa L(E)N),\]where
constant $c''$ depends only on $p,q,\omega.$
\end{proof}

\begin{lemma}\label{22012}Assume $L(E_0)>0$.  Let  $\rho'_0$ be as in Lemma
\ref{62006}, $N_1$ be as in Lemma \ref{22011} with
$\kappa=\frac{1}{40}$. Then for $N\geq N_1$ and any
$E\in(E_0-\rho'_0,E_0+\rho'_0)$ holds
\[\mes\{x:|\tilde{u}_N(x,E)-L(E)|> \frac{L(E)}{10}\}<\exp(-c L(E) N),\]  where constant $c$ depends only
on $p,q,\omega$, but does not depend on $E$ or $E_0$.
\end{lemma}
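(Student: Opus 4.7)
The plan is a three–step triangle inequality argument splitting
$\tilde u_N(x,E)-L(E)$ through two intermediate quantities, namely
the orbit average
$A_K(x):=\frac{1}{K}\sum_{k=1}^{K}\tilde u_N(x+k\omega,E)$ and the
integral $L_N(E)=\int_\tor \tilde u_N(y,E)\,dy$. Throughout I will
use $K=\frac{\kappa}{20}N$ with $\kappa=\tfrac{1}{40}$, and I will
assume $N\ge N_1$ so that Lemma~\ref{22011} is available and also
$N$ is larger than the $\tilde N_0$ of Lemma~\ref{62006}.

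\textbf{Step 1 (pointwise vs.\ orbit average).} Apply Lemma~\ref{22011}
directly with $\kappa=\tfrac{1}{40}$. This produces an exceptional set
$\mathbb B_1$ with $\mes\mathbb B_1<\exp(-c''\kappa L(E)N)$ outside
which
\[
\bigl|\tilde u_N(x,E)-A_K(x)\bigr|\le \tfrac{1}{40}L(E).
\]

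\textbf{Step 2 (orbit average vs.\ integral).} Because
$\tilde u_N(\cdot,E)$ admits a Riesz representation with uniformly
bounded mass and harmonic part (combine Lemma~\ref{2200c} with the
$\tfrac1{2N}\sum\log|q\cdot q|$ correction from (\ref{10004}), whose
$\mu_q,h_q$ are controlled by Remark~\ref{25002}), the ergodic
large-deviation estimate for subharmonic functions in the form of
Remark~\ref{23006}(3) applies on the circle $r=1$. With
$\delta=\tfrac{1}{40}L(E)$ and with $K=\frac{\kappa}{20}N\ge\check K$
(which is ensured by $N\ge N_1$, since $L(E)\asymp L(E_0)$ by
Lemma~\ref{62006}), this yields a second exceptional set
$\mathbb B_2$ with
$\mes\mathbb B_2\le\exp(-\hat c\cdot \tfrac{L(E)}{40}\cdot K)
=\exp(-c_1 L(E)N)$, outside which
\[
\bigl|A_K(x)-L_N(E)\bigr|\le \tfrac{1}{40}L(E).
\]

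\textbf{Step 3 (integral vs.\ Lyapunov exponent).} Lemma~\ref{62006}
gives, for $N\ge\tilde N_0$ and any $E\in(E_0-\rho'_0,E_0+\rho'_0)$,
the deterministic bound
\[
\bigl|L_N(E)-L(E)\bigr|<\tfrac{1}{20}L(E).
\]

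\textbf{Putting the three steps together.} For
$x\notin\mathbb B_1\cup\mathbb B_2$ the triangle inequality gives
\[
\bigl|\tilde u_N(x,E)-L(E)\bigr|
\le\tfrac{L(E)}{40}+\tfrac{L(E)}{40}+\tfrac{L(E)}{20}
=\tfrac{L(E)}{10},
\]
while $\mes(\mathbb B_1\cup\mathbb B_2)\le\exp(-c''\kappa L(E)N)+
\exp(-c_1 L(E)N)\le\exp(-cL(E)N)$ for a constant $c=c(p,q,\omega)$
independent of $E$ and $E_0$.

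The only real obstacle is Step 2: one must make sure the Riesz
constants controlling $\tilde u_N$ (and hence the constants $\hat c$
and $\check K$ of Remark~\ref{23006}(3)) depend only on $p,q,\omega$
and not on $N$ or $E$. This is precisely the content of
Lemma~\ref{2200c} together with the uniform bounds for the $q$
correction in Remark~\ref{25002}, so that the subharmonic LDT can be
applied with a rate $\hat c$ that is uniform in $N$ and in
$E\in(E_0-\rho'_0,E_0+\rho'_0)$. Once this uniformity is in place
the triangle inequality above delivers the claimed bound.
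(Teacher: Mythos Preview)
Your proposal is correct and follows essentially the same route as the paper: split $\tilde u_N(x,E)-L(E)$ through the orbit average $A_K(x)$ and the integral $L_N(E)$, apply Lemma~\ref{22011} with $\kappa=\tfrac1{40}$ for the first leg, Remark~\ref{23006}(3) with $\delta=\tfrac{L(E)}{40}$ and $K=\tfrac{N}{800}$ for the second, and Lemma~\ref{62006} for the third, obtaining $\tfrac1{40}+\tfrac1{40}+\tfrac1{20}=\tfrac1{10}$. Your attention to the uniformity of the Riesz constants (hence of $\hat c$ and $\check K$) in $N$ and $E$ is exactly the point the paper handles implicitly by citing Remark~\ref{23006}(3), so there is no gap.
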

\begin{proof}Due to Remark \ref{23006} for $K>\check{K}$ holds
\[\mes\{x:|\sum_{k=1}^K\tilde{u}_N(x+k\omega,E)-K<\tilde{u}_N(\cdot,E)>|>\delta
K\}\leq \exp(-\hat{c}\delta K),\] where
$\hat{c}=\hat{c}(p,q,\omega)$. Set $\delta=\frac{ L(E)}{40}$. Thus
$\check{K}=\check{K}(p,q,\omega, E_0)$. Due to Lemma \ref{62006},
$L_N(E)<(1+\frac{1}{20})L(E)$ if $N\geq \tilde{N}_0$, where
$\tilde{N}_0$ is as in Lemma \ref{62006}. Note that if
\[|\frac{1}{K}\sum_{k=1}^K\tilde{u}_N(x+k\omega,E)-<\tilde{u}_N(\cdot,E)>|\leq\delta,\]then
\begin{eqnarray}|\frac{1}{K}\sum_{j=1}^K\tilde{u}_N(x+j\omega,E)-L(E)|&\leq
&
|\frac{1}{K}\sum_{j=1}^K\tilde{u}_N(x+j\omega,E)-L_N(E)|+|L_N(E)-L(E)|<\delta+\frac{1}{20}L(E)\nonumber\\
&=&\frac{1}{40} L(E)+\frac{1}{20} L(E) =\frac{3}{40}
L(E)\nonumber\end{eqnarray} Therefore
\[\mes\{x:|\frac{1}{K}\sum_{j=1}^K\tilde{u}_N(x+j\omega,E)-L(E)|>\frac{3}{40}
L(E)\}<\exp(-\frac{\hat{c}}{40}  L(E) K).\] Let $K=\frac{ N}{800}$
as in Lemma \ref{22011}, with $\kappa=\frac{1}{40}$. Then for $N\geq
N_1$ holds
\[\mes\{x:|\tilde{u}_N(x,E)-\frac{1}{K}\sum_{k=1}^K\tilde{u}_N(x+k\omega,E)|>\frac{ L(E)}{40}\}\leq
\exp(-\frac{c''}{40} L(E)N).\] Recall that $N_1>\tilde{N}_0$ with
$K=\frac{N}{800}$(see (\ref{21118})). Let $N\geq N_1$. Then
\[\mes\{x:|\tilde{u}_N(x,E)-L(E)|>\frac{ L(E)}{10}\}<\exp(-\frac{\hat{c}}{32000} L(E)
N)+\exp(-\frac{c''}{40} L(E)N)<\exp(-c_4 L(E) N),\] $c_4$ depends
only on $p,q, \omega$. Here we replace  $c_4$ by $c$ for convenient
notations.
\end{proof}

\begin{lemma}\label{32002}
Assume $L(E_0)>0$.  Let  $\rho'_0$ be as in Lemma \ref{62006}, $N_1$
be as in Lemma \ref{22011} with $\kappa=\frac{1}{40}$, $c$ be as in
lemma \ref{22012}. Let $N\geq N_1$ and
$E\in(E_0-\rho'_0,E_0+\rho'_0)$ be arbitrary. Let $N'=mN, m\in
\mathbb{N}$ and $\exp(\frac{c}{10}L(E) N)\leq m\leq
\exp(\frac{c}{4}L(E) N) $. Then \be
|L_{N'}(E)+L_{N}(E)-2L_{2N}(E)|\leq \exp(-c_6 L(E) N),\ee where
$c_6=c_6(p,q,\omega)$.
\end{lemma}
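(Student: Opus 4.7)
The plan is to imitate the avalanche--principle argument of Lemma~\ref{62001}, but replace the large deviation input (\ref{25001}) by the sharper Lemma~\ref{22012}, whose tail decay $\exp(-cL(E)N)$ depends linearly on $L(E)N$ rather than quadratically on $\delta$. Write $\tilde{M}_{N'}(x,E) = \prod_{j=m}^{1}\tilde{M}_N(x+(j-1)N\omega,E)$ and apply Proposition~\ref{prop:AP} factor-by-factor after locating a set on which every factor $\tilde{M}_N(x+jN\omega,E)$ and every ``double'' factor $\tilde{M}_{2N}(x+jN\omega,E)$ is close to its Lyapunov norm.

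Applying Lemma~\ref{22012} both to $\tilde{u}_N$ and to $\tilde{u}_{2N}$ (which is legal since $2N\ge N_1$) at each of the $m$ shifts $x+jN\omega$, $0\le j\le m-1$, and taking a union bound, there is a set $\mathbb{G}_1\subseteq\mathbb{T}$ with
\[
\mes(\mathbb{T}\setminus\mathbb{G}_1)\le 4m\exp(-cL(E)N)\le 4\exp\bigl(-\tfrac{3c}{4}L(E)N\bigr),
\]
using $m\le\exp(\tfrac{c}{4}L(E)N)$, such that on $\mathbb{G}_1$
\[
\|\tilde{M}_N(x+jN\omega,E)\|>\exp\bigl(\tfrac{9}{10}NL(E)\bigr),\qquad \bigl|\tilde{u}_{2N}(x+jN\omega,E)-L(E)\bigr|<\tfrac{L(E)}{10}.
\]

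Set $\mu=\exp(\tfrac{9}{10}NL(E))$. The hypothesis $\mu>m$ of the avalanche principle follows from $m\le\exp(\tfrac{c}{4}L(E)N)$ provided $c<\tfrac{9}{5}$, which is true since $c$ comes from large--deviation constants. The gap condition
\[
\log\|\tilde{M}_N(x+jN\omega)\|+\log\|\tilde{M}_N(x+(j+1)N\omega)\|-\log\|\tilde{M}_{2N}(x+jN\omega)\|<\tfrac{4NL(E)}{10}<\tfrac{1}{2}\log\mu
\]
holds on $\mathbb{G}_1$ by direct substitution. Proposition~\ref{prop:AP} therefore gives on $\mathbb{G}_1$
\[
\Bigl|\log\|\tilde{M}_{N'}(x)\|+\sum_{j=2}^{m-1}\log\|\tilde{M}_N(x+(j-1)N\omega)\|-\sum_{j=1}^{m-1}\log\|\tilde{M}_{2N}(x+(j-1)N\omega)\|\Bigr|\le Cm\exp\bigl(-\tfrac{9}{10}NL(E)\bigr).
\]

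Dividing by $N'=mN$ and integrating over $\mathbb{G}_1$, then extending the integrals to all of $\mathbb{T}$ by Cauchy--Schwartz and (\ref{11111}) (the $L^2$-cost is at most $\tilde{C}(p,q)^{1/2}\bigl(\mes(\mathbb{T}\setminus\mathbb{G}_1)\bigr)^{1/2}\le\exp(-\tfrac{c}{4}L(E)N)$ up to constants), we get
\[
\Bigl|L_{N'}(E)+\tfrac{m-2}{m}L_N(E)-\tfrac{2(m-1)}{m}L_{2N}(E)\Bigr|\le \exp(-c_6 L(E)N).
\]
The missing $\tfrac{2}{m}$-fractions contribute at most $\tfrac{2}{m}(L_N(E)+L_{2N}(E))=O(L(E)\exp(-\tfrac{c}{10}L(E)N))$, which is absorbed into the exponential, producing $|L_{N'}(E)+L_N(E)-2L_{2N}(E)|\le\exp(-c_6 L(E)N)$ as claimed. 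The main piece of bookkeeping, and really the only delicate step, is to choose $c_6=c_6(p,q,\omega)$ small enough (relative to $c$, $\tilde{c}$, and the Cauchy--Schwartz bound) so that all three error contributions --- the avalanche remainder $Cm\mu^{-1}/N$, the Cauchy--Schwartz extension cost, and the $\tfrac{2}{m}L(E)$ defect --- are simultaneously dominated by $\exp(-c_6 L(E)N)$ uniformly in $m$ in the range $\exp(\tfrac{c}{10}L(E)N)\le m\le\exp(\tfrac{c}{4}L(E)N)$.
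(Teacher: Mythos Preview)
Your proof is correct and follows essentially the same route as the paper's own argument: define a good set via Lemma~\ref{22012} applied to the $m$ shifts of $\tilde{u}_N$ and $\tilde{u}_{2N}$, verify the avalanche-principle hypotheses with $\mu=\exp(\tfrac{9}{10}NL(E))$ and gap bound $\tfrac{2}{5}NL(E)<\tfrac12\log\mu$, integrate the avalanche inequality over the good set, extend to $\mathbb{T}$ via (\ref{11111}), and absorb the $\tfrac{2}{m}$ defect using the lower bound on $m$. The only cosmetic differences are in the explicit constants (the paper records $\mes(\mathbb{T}\setminus\mathbb{G})\le\exp(-\tfrac{2c}{3}L(E)N)$ rather than your $\tfrac{3c}{4}$, and does not spell out the $\mu>m$ check you include), none of which affect the argument.
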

\begin{proof}
Let $\mathbb{G}_j=\{x:|\tilde{u}_N(x+j\omega N)-L(E)|\leq
\frac{1}{10} L(E)\}\bigcap\{x:|\tilde{u}_{2N}(x+j\omega N)-L(E)|\leq
\frac{1}{10} L(E)\}.$ By Lemma \ref{22012},
$\mes(\mathbb{T}\backslash\mathbb{G}_j)\leq 2\exp(-cL(E)N)$ for any
j. Set $\mathbb{G}=\bigcap_{0\leq j\leq m-1}\mathbb{G}_j$. Then
$\mes(\mathbb{T}\backslash \mathbb{G})\leq
\exp(-\frac{2c}{3}L(E)N)$. One has
\[\|\tilde{M}_{N}(x+jN\omega,E)\|>\exp(\frac{9}{10}NL(E)),\]
and
\begin{eqnarray}
&&|\log\|\tilde{M}_{N}(x+jN\omega,E)\|+\log\|\tilde{M}_{N}(x+(j+1)N\omega,E)\|-\log\|\tilde{M}_{N}(x+jN\omega,E)\tilde{M}_{N}(x+(j+1)N\omega,E)\||\\
&\ &\ \ \ \ \ \ \ \
<\frac{1}{10}NL(E)+\frac{1}{10}NL(E)+\frac{1}{10}2NL(E)
=\frac{2}{5}NL(E),\nonumber\end{eqnarray} for any $x\in\mathbb{G}$,
$0\leq j\leq m$. One has \[ \tilde{M}_{N'}(x,E)=\prod_{j=m}^1
\tilde{M}_N(x+(j-1)N\omega,E).\] If $x\in\mathbb{G}$, the avalanche
principle applies with $\mu=\exp(\frac{9}{10}NL(E))$. So
\[\left |\log
\|\tilde{M}_{N'}(x,E)\|+\sum_{j=2}^{m-1}\log
\|\tilde{M}_{N}(x+(j-1)N\omega,E)\|-\sum_{j=1}^{m-1}\log
\|\tilde{M}_{2N}(x+(j-1)N\omega,E)\|\right |\leq
C\frac{m}{\mu}=Cm\exp(-\frac{9}{10}NL(E)).\] Dividing by $N'$ and
integrating over $\mathbb{G}$, one obtains
\begin{eqnarray}\label{30102}
&&|\int_{\mathbb{G}}\tilde{u}_{N'}(x,E)dx+\frac{1}{m}\int_{\mathbb{G}}\sum_{j=2}^{m-1}\tilde{u}_{N}(x+(j-1)N\omega,E)dx
-\frac{2}{m}\int_{\mathbb{G}}\sum_{j=1}^{m-2}
\tilde{u}_{2N}(x+(j-1)N\omega,E)dx|\\&\leq&
C\frac{m}{N'}\exp(-\frac{9}{10}NL_N(E))\leq
C\exp(-\frac{9}{10}NL_N(E)) \nonumber.\end{eqnarray} Due to
(\ref{11111})
\[|\int_{\mathbb{T}\backslash\mathbb{G}}\tilde{u}_Kdx|\leq
\tilde{C}(p,q)^{\frac{1}{2}}\exp(-\frac{c}{3}L(E)N)\] for any $K$.
Thus
\begin{equation}\label{30101}|\int_{\mathbb{T}\backslash\mathbb{G}}\tilde{u}_{N'}(x,E)dx+
\frac{1}{m}\int_{\mathbb{T}\backslash\mathbb{G}}\sum_{j=2}^{m-1}\tilde{u}_{N}(x+jN\omega,E)dx
-\frac{2}{m}\int_{\mathbb{T}\backslash\mathbb{G}}\sum_{j=1}^{m-1}
\tilde{u}_{2N}(x+jN\omega,E)dx|\leq
4\tilde{C}(p,q)^{\frac{1}{2}}\exp(-\frac{c}{3}L(E)N).
\end{equation}
Combining (\ref{30101}) with (\ref{30102}), one has
\begin{equation}
|L_{N'}(E)+\frac{m-2}{m}L_N(E)-\frac{2(m-1)}{m}L_{2N}(E)| \leq
4\tilde{C}(p,q)^{\frac{1}{2}}\exp(-\frac{c}{3}L(E)N)+C\exp(-\frac{9}{10}NL(E))\leq
\exp(-c_5L(E) N)\nonumber.\end{equation} As $\exp(\frac{c}{10}L(E)
N)\leq m$, then
\begin{equation}
|L_{N'}(E)+L_N(E)-2L_{2N}(E)|\leq \exp(-c_6 L(E)
N),\end{equation}where $c_6=c_6(p,q,\omega)$.
\end{proof}

\begin{lemma}\label{62010}
Assume $L(E_0)>0$.  Let  $\rho'_0$ be as in Lemma \ref{62006}, $N_1$
be as in Lemma \ref{22011} with $\kappa=\frac{1}{40}$. Let $N\geq
N_1$ and $E\in(E_0-\rho'_0,E_0+\rho'_0)$ be arbitrary. Then
\begin{equation}
|L(E)+L_{N}(E)-2L_{2N}(E)|<\exp(-c L(E) N),
\end{equation}where $c=c(p,q,\omega)$.
\end{lemma}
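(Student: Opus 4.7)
The proof will mirror the proof of Lemma~\ref{62002}, but with Lemma~\ref{32002} in place of Lemma~\ref{62001}. The situation here is in fact cleaner: in Lemma~\ref{62002} one had to keep halving the parameter $\delta$ at each iteration step, whereas now $L(E)$ itself plays the role of the scale parameter, and Lemma~\ref{62006} guarantees that $L_M(E)$ stays within a fixed fraction of $L(E)$ for all $M \ge \tilde{N}_0$. Hence the inductive hypotheses propagate automatically.

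The plan is as follows. Set $N'_0 = N$. Given $N'_i$, choose $m_i$ with
\[
\exp\!\bigl(\tfrac{c}{8} L(E) N'_i\bigr) \leq m_i < \exp\!\bigl(\tfrac{c}{8} L(E) N'_i\bigr) + 1,
\]
so that $m_i$ lies inside the window $[\exp(\tfrac{c}{10}L(E)N'_i),\exp(\tfrac{c}{4}L(E)N'_i)]$ required by Lemma~\ref{32002}. Set $N'_{i+1} = m_i N'_i$. Applying Lemma~\ref{32002} with $N$ replaced by $N'_i$ and $m$ taken first as $m_i$ and then as $2 m_i$ (also in the allowed range) yields
\[
|L_{N'_{i+1}}(E) + L_{N'_i}(E) - 2 L_{2 N'_i}(E)| \leq \exp(-c_6 L(E) N'_i),
\]
\[
|L_{2 N'_{i+1}}(E) + L_{N'_i}(E) - 2 L_{2 N'_i}(E)| \leq \exp(-c_6 L(E) N'_i),
\]
and subtracting gives $|L_{N'_{i+1}}(E) - L_{2 N'_{i+1}}(E)| \leq 2\exp(-c_6 L(E) N'_i)$. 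I should verify in passing that the hypotheses $N'_i \geq N_1$ and $E \in (E_0 - \rho'_0, E_0 + \rho'_0)$ remain valid at every step; the former is trivial since the $N'_i$ are increasing, and the latter does not change with $i$.

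Next I would telescope. Since $L_{N'_i}(E) \to L(E)$ by the subadditive limit, we write
\[
L(E) + L_N(E) - 2 L_{2N}(E) = \bigl(L_{N'_1}(E) + L_N(E) - 2 L_{2N}(E)\bigr) + \sum_{i \geq 1}\bigl(L_{N'_{i+1}}(E) - L_{N'_i}(E)\bigr).
\]
The first summand is bounded by $\exp(-c_6 L(E) N)$ from the $i=0$ case above. For $i \geq 1$, the triangle inequality gives
\[
|L_{N'_{i+1}}(E) - L_{N'_i}(E)| \leq |L_{N'_{i+1}}(E) + L_{N'_i}(E) - 2 L_{2 N'_i}(E)| + 2 |L_{N'_i}(E) - L_{2 N'_i}(E)| \leq 5 \exp(-c_6 L(E) N'_{i-1}).
\]
Because $N'_{i} \geq \exp(\tfrac{c}{8} L(E) N'_{i-1}) N'_{i-1}$, the sequence $N'_i$ grows super-exponentially, so the tail sum is dominated by its first term $5 \exp(-c_6 L(E) N)$, and the full error is $\leq \exp(-c L(E) N)$ for a slightly smaller constant $c = c(p,q,\omega)$.

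The only real subtlety is the bookkeeping in the tail sum: one must verify that the double-exponential growth of $N'_i$ beats the possibly worsening prefactors, so that the entire tail of the telescoping series truly collapses into a single $\exp(-c L(E) N)$. Since each $N'_{i+1}$ is enormously larger than $N'_i$, this is immediate. No new analytic input beyond Lemmas~\ref{62006} and~\ref{32002} is required.
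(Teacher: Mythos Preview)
Your proposal is correct and follows essentially the same argument as the paper's proof: both iterate Lemma~\ref{32002} along a super-exponentially growing sequence $N'_i$ (with $m_i\approx\exp(\tfrac{c}{8}L(E)N'_i)$), obtain the pair of estimates for $L_{N'_{i+1}}$ and $L_{2N'_{i+1}}$, and then telescope using $L_{N'_i}(E)\to L(E)$ to sum the geometric-type tail. Your remark that no $\delta$-halving is needed here (unlike Lemma~\ref{62002}) because $L(E)$ serves as a fixed scale is a helpful clarification, but the underlying scheme is the same.
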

\begin{proof} By Lemma \ref{32002}  for  $N'=mN$, with  $m\in
\mathbb{N}$, $\exp(\frac{c}{8}NL(E))\leq m<
\exp(\frac{c}{8}NL(E))+1$, one has
\[ |L_{N'}(E)+L_{N}(E)-2L_{2N}(E)|< \exp(-c_6L(E) N),
 \]and
\begin{displaymath}
|L_{2N'}(E)+L_N(E)-2L_{2N}(E)|<\exp(-c_6L(E) N).
\end{displaymath}
In particular
\begin{displaymath}
|L_{N'}(E)-L_{2N'}(E)|<2\exp(-c_6L(E) N).
\end{displaymath}
Pick $\exp(\frac{c}{8}NL(E))\leq m_1 <\exp(\frac{c}{8}NL(E))+1$. Set
$N'_1=m_1 N$. Similarly define $N'_2=m_2N'_1$, i.e. with $N'_1$ in
the role of $N$. E.T.C., obtain $N'_s$ such that
\[
|L_{N'_{s+1}}(E)+L_{N'_s}(E)-2L_{2N'_s}(E)|<\exp(-c_6 L(E) N'_s),
\]
\[
|L_{2N'_{s+1}}(E)+L_{N'_s}(E)-2L_{2N'_s}(E)|<\exp(-c_6L(E) N'_s),
\]
\[|L_{N'_{s+1}}(E)-L_{2N'_{s+1}}(E)|<2\exp(-c_6 L(E)
N'_s).\] One has $L_{N'_s}(E)\to L(E)$ with $s\to\infty$. So
\begin{eqnarray}\\
|L(E)+L_N(E)-2L_{2N}(E)|&=&|\sum_{s\geq
1}(L_{N'_{s+1}}(E)-L_{N'_{s}}(E))+L_{N'_1}(E)+L_N(E)-2L_{2N}(E)|\nonumber\\
&\leq &\sum_{s\geq 1}|L_{N'_{s+1}}(E)-L_{N'_{s}}(E)|+|L_{N'_1}(E)+L_N(E)-2L_{2N}(E)|\nonumber\\
&<&\sum_{s\geq 2}5\exp(-c_6L(E) N'_{s-1})+5\exp(-c_6L(E)
N)+\exp(-c_6L(E) N)<\exp(-c_7L(E)N).\nonumber
\end{eqnarray}Here again we change $c_7$ by c for convenient
notations.
\end{proof}

\section{proof of the main theorem }
\begin{proof}[Proof of Theorem \ref{mainthm}]Assume $\|\tilde{M}_N(x,E)\|\geq
\|\tilde{M}_N(x,E')\|$, then
\begin{eqnarray}\label{41002}\left |\log\|\tilde{M}_N(x,E)\|-\log \|\tilde{M}_N(x,E')\| \right |&=&
\log\frac{\|\tilde{M}_N(x,E,\omega)\|}{\|\tilde{M}_N(x,E',\omega)\|}=\log
(1+\frac{\|\tilde{M}_N(x,E)\|-
\|\tilde{M}_N(x,E')\|}{\|\tilde{M}_N(x,E)\|})\\
&\leq &\frac{\|\tilde{M}_N(x,E)\|-
\|\tilde{M}_N(x,E')\|}{\|\tilde{M}_N(x,E)\|}\leq\|\tilde{M}_N(x,E)\|-
\|\tilde{M}_N(x,E')\|\nonumber\\
&=&\frac{\|T_N(x,E)\|-
\|T_N(x,E')\|}{\prod_{n=0}^{N-1}|q(x+n\omega)q(x+(n+1)\omega)|^{\frac{1}{2}}}\nonumber\\&\leq&
\frac{\|T_N(x,E)-T_N(x,E')\|}{\prod_{n=0}^{N-1}|q(x+n\omega)q(x+(n+1)\omega)|^{\frac{1}{2}}}.\nonumber\end{eqnarray}
One obtains
\begin{eqnarray}\label{41003}
\|T_N(x,E)-T_N(x,E')\|&\leq&\sum_{j=0}^{N-1}(\|B(x+(N-1)\omega,E)\times\cdots\times
B(x+(j+1)\omega,E)\|\times \\&\ &\ \ \ \ \ \ \ \ \ \ \
\|B(x+j\omega,E)-
B(x+j\omega,E')\|\times\|B(x+(j-1)\omega,E')\times\cdots\times
B(x,E')\|)
\nonumber\\
&=&\sum_{j=0}^{N-1}\|\prod_{m=1}^{N-j}B(x+(N-m)\omega,E)\|\times
\|\prod_{m=j-1}^0B(x+m\omega,E')\|\times|E-E'|\nonumber.
\end{eqnarray}
Combining (\ref{41002}), (\ref{41003}) and Lemma \ref{22020}, then
\begin{eqnarray}
(\ref{41002})&\leq&\sum_{j=0}^{N-1}\|\prod_{m=1}^{N-j}B(x+(N-m)\omega,E)\|\times
\|\prod_{m=j-1}^0B(x+m\omega,E')\|\times \exp(-ND)\times|E-E'|\\
&&\times\exp\left (-\sum_{n=o}^{N-1}\frac{1}{2}\log|q(x+n\omega)q(x+(n+1)\omega)|+ND\right )\nonumber\\
 &=&\sum_{j=0}^{N-1}\|\prod_{m=1}^{N-j}B(x+(N-m)\omega,E)\|\times
\|\prod_{m=j-1}^0B(x+m\omega,E')\|\times \exp(-ND)\times \exp(-N
F_N(x))\times |E-E'| \nonumber
\end{eqnarray}
Combining (\ref{60036}) with (\ref{60038}), one has
\begin{equation}L_N(E)\leq
L(E)+\frac{1}{25}L(E_0)<\frac{11}{10}L(E_0)+\frac{1}{25}L(E_0)=\frac{57}{50}L(E_0)
\end{equation} for any $N\geq \tilde{N}_0$ and any
$E\in(E_0-\rho'_0,E_0+\rho'_0)$, where $\tilde{N}_0$ and $\rho'_0$
are as in Lemma \ref{62006}. Let $N>N_2:=2\frac{\log
C(p,q)-D}{L(E_0)}\tilde{N}_0\geq 2\tilde{N}_0$ and $E,E'\in
(E_0-\rho'_0, E_0+\rho'_0)$, where $(\log (p,q)-D)$ is as in Remark
\ref{23023}, $\tilde{N}_0$ and $\rho'_0$ are as in Lemma
\ref{62006}. Due to Lemma \ref{22020} and Remark \ref{23023}, one
has
\begin{eqnarray}\\
 (\ref{41002})&\leq & \left (\sum_{j=1}^{\tilde{N}_0}+\sum_{j=\tilde{N}_0+1}^{N-\tilde{N}_0}
 +\sum_{j=N-\tilde{N}_0+1}^{N}\right ) \|\prod_{m=1}^{N-j}B(x+(N-m)\omega,E)\|\times
\|\prod_{m=j-1}^0B(x+m\omega,E')\|\times \exp(-ND)\nonumber\\
&&\ \ \ \ \ \ \ \ \ \times \exp(-N F_N(x))\times |E-E'|\nonumber\\
&\leq &  \left (2\sum_{j=1}^{\tilde{N}_0} \exp\left ((\log
C(p,q)-D)\tilde{N}_0+\frac{57}{50}L(E_0)N+C_6(\frac{\log
N}{N})^{\frac{1}{2}}-D
\right)+\sum_{j=\tilde{N}_0+1}^{N-\tilde{N}_0} \exp\left
(\frac{57}{50}L(E_0)N+2C_6(\frac{\log
N}{N})^{\frac{1}{2}}-D \right)\right )\nonumber\\
&&\ \ \ \ \ \ \ \ \ \times \exp(-N F_N(x))\times |E-E'|\nonumber\\
&\leq&\sum_{j=1}^N \exp\left
(\frac{1}{2}L(E_0)N+\frac{57}{50}L(E_0)N+2C_6(\frac{\log
N}{N})^{\frac{1}{2}}-D \right)\times \exp(-N F_N(x))\times
|E-E'|\nonumber.\end{eqnarray} There exist $N_3=N_3(p,q,\omega,E_0)$
s.t. for any $N\geq N_3$ holds
\[\sum_{j=1}^N \exp\left
(\frac{82}{50}L(E_0)N+2C_6(\frac{\log N}{N})^{\frac{1}{2}}-D
\right)\leq \exp(2L(E_0)N).\]It implies that for any $N\geq
\max(N_2,N_3)$ holds \[ (\ref{41002}) \leq
\exp(2L(E_0)N)\times\exp(-N
 F_N(x))\times|E-E'|.\]It is obvious that  \[ (\ref{41002}) \leq
\exp(2L(E_0)N)\times\exp(-N
 F_N(x))\times|E-E'|,\] when $\|\tilde{M}_N(x,E,\omega)\|\geq
\|\tilde{M}_N(x,E'\omega)\|$. Set
\[\mathbb{B}:=\{x:NF_N(x)<-NL(E_0)\},\]
then
\[\mes(\mathbb{B})\leq
\mes(\{x:|NF_N(x)-N<F_N>|>NL(E_0)\})<\exp(-cL(E_0)N),\] since
$<F_N>=0$. It implies
\[(\ref{41002})<\exp(3L(E_0)N)|E-E'|,\mbox{ if }
x\in\mathbb{B}.\] Due to (\ref{11111}), one has
\begin{eqnarray}
|L_N(E)-L_N(E')|&=&
\int_{\mathbb{T}\backslash\mathbb{B}}|\tilde{u}_N(x,E)-\tilde{u}_N(x,E')|dx+\int_{\mathbb{B}}|\tilde{u}_N(x,E)-\tilde{u}_N(x,E')|dx\\
&<& \exp
(3L(E_0)N)|E-E'|+2\tilde{C}(p,q)^{\frac{1}{2}}\exp(-\frac{c}{2}L(E_0)N)\nonumber.
\end{eqnarray}
Let $N\geq N_4:=\max(N_1,N_2,N_3)$, where $N_1$ is as in Lemma
\ref{22011}. Due to Lemma \ref{62010}
\begin{eqnarray}|L(E)-L(E')|&\leq &
|L(E)+L_N(E)-2L_{2N}(E)|+|L(E')+L_N(E')-2L_{2N}(E')|\\
&\ &\ \ +|L_N(E)-L_N(E')|+2|L_{2N}(E)-L_{2N}(E')|\nonumber\\
&< &2\exp(-c_6L(E_0)N)+3\exp(6L(E_0)N)|E-E'|+3\tilde{C}(p,q)^{\frac{1}{2}}\exp(-\frac{c}{2}L(E_0)N)\nonumber\\
&< &\exp(-c_7L(E_0)N)+3 \exp (6L(E_0)N)|E-E'|,
\nonumber\end{eqnarray} where $c_7=c_7(p,q,\omega)$. Set
$\rho''_0=\exp(-(6+c_7)L(E_0)N_4)$, and $\rho_0=\min(\rho'_0,
\frac{\rho''_0}{2}).$ Then for $E,E'\in (E_0-\rho_0, E_0+\rho_0)$,
there exists $N\geq N_4$ such that
\[\exp\big(-(6+c_7)L(E_0)(N+1)\big)\leq |E-E'|\leq \exp\big(-(6+c_7)L(E_0)N\big).\]
It implies
\begin{eqnarray}|L(E)-L(E')|&<&4\exp(-c_7L(E_0)N)=4\exp\big(-\frac{N}{N+1}c_7L(E_0)(N+1)\big)<4\exp\big(-\frac{2c_7}{3}c_7L(E_0)(N+1)\big)\\
&<&\exp(-\frac{c_7}{2}L(E_0)N)<
|E-E'|^{\beta},\nonumber\end{eqnarray} where
$\beta=\frac{c_7}{12+2c_7}$, only depending on $p$, $q$ and
$\omega$. By (\ref{10007}), one also has
\[|J(E)-J(E')|=|L(E)-L(E')|<|E-E'|^{\beta}.\]
\end{proof}

\end{document}